\documentclass[a4paper,10pt]{amsart}
\usepackage{amsmath,amssymb,amsthm,graphicx,amsfonts}
\usepackage{mathrsfs}
\usepackage{relsize}
\usepackage{subcaption}
\usepackage{float}
\usepackage{setspace}
\usepackage[all]{xy}

\usepackage[top=25mm,bottom=25mm,left=30mm,right=30mm]{geometry}
\usepackage{tikz}
\usetikzlibrary{patterns}
\usepackage{pxpgfmark}
\usepackage{multirow}
\usetikzlibrary{intersections,arrows,calc,decorations.markings}
\usepackage[colorlinks, linkcolor=blue, anchorcolor=blue, citecolor=green]{hyperref}
\tikzset{->-/.style={decoration={  markings,  mark=at position #1 with
			{\arrow{>}}},postaction={decorate}}}
\tikzset{-<-/.style={decoration={  markings,  mark=at position #1 with
			{\arrow{<}}},postaction={decorate}}}

\theoremstyle{plain}          
\newtheorem{theorem}{Theorem}[section]
\newtheorem{proposition}[theorem]{Proposition}
\newtheorem{lemma}[theorem]{Lemma}

\theoremstyle{definition}
\newtheorem{definition}[theorem]{Definition}

\newtheorem{construction}[theorem]{Construction}

\newtheorem{rk}[theorem]{Remark}
\newtheorem{ex}[theorem]{Example}
\newtheorem{lem}[theorem]{Lemma}

\newtheorem{thm}[theorem]{Theorem}
\newtheorem*{convention}{Convention}

\def\Tcal{\mathcal{T}}
\def\Bcal{\mathcal{B}}

\def\SS{\mathbf{S}}
\def\RR{\mathbf{R}}
\def\af{\mathbf{a}}

\def\Pf{\mathbf{P}}
\def\MM{\mathbf{M}}

\def\Tcal{\mathcal{T}}
\def\PS{\mathbb{P}(\mathbf{S})}
\def\<{\langle}
\def\>{\rangle}
\def\ba{\mathbf{a}}

\def\PP{\mathbf{P}}
\def\MM{\mathbf{M}}

\newcommand{\opname}[1]{\operatorname{#1}}

\newcommand{\id}{\mathbf{1}}
\newcommand{\Int}{\mathbf{Int}}
\newcommand{\Aut}{\mathrm{Aut}}
\newcommand{\Ext}{\mathrm{Ext}}

\newcommand{\mmod}{\mathrm{mod}}
\newcommand{\Intv}{\underline{\mathbf{Int}}}
\newcommand{\dimv}{\underline{\mathbf{dim}}}

\newcommand{\Hom}{\mathrm{Hom}}

\newcommand{\apfree}{\angle^-_{\mathbf{P-free}}}
\newcommand{\appfree}{\angle^-_{p,\mathbf{P-free}}}

\newcommand{\ha}{\angle^-}
\newcommand{\FD}{\mathbf{FD}}
\newcommand{\FPD}{\mathbf{FPD}}
\newcommand{\aad}{\mathbf{acwad}}
\newcommand{\cad}{\mathbf{cwad}}
\newcommand{\PD}{\mathbf{PD}}
\newcommand{\D}{\mathbf{D}}
\newcommand{\End}{\operatorname{End}\nolimits}

\newcommand{\proj}{\opname{proj}}

\newcommand{\nn}{node[black]{$\bullet$}}

\newcommand{\ie}{{\em i.e.}\ }
\newcommand{\confer}{{\em cf.}\ }

\makeatletter
\newcommand*\bigcdot{\mathpalette\bigcdot@{.5}}
\newcommand*\bigcdot@[2]{\mathbin{\vcenter{\hbox{\scalebox{#2}{$\m@th#1\bullet$}}}}}

 \title[Endomorphism algebras of tilting modules over gentle algebras]{Geometric models for endomorphism algebras of tilting modules over gentle algebras}
\thanks{Partially supported by the National Natural Science Foundation of China (Grant No. 12171397, 12471037)}
\author[Deng]{Difan Deng}
\address{Difan Deng\\Department of Mathematics\\
	Southwest Jiaotong University\\
	610031 Chengdu \\
	P.R.China}
\email{difandeng@my.swjtu.edu.cn}

\author[Geng]{Shengfei Geng}
\address{Shengfei Geng\\Department of Mathematics\\
Sichuan University\\
610064 Chengdu\\
P.R.China}
\email{genshengfei@scu.edu.cn}

\author[Liu]{Pin Liu}
\address{Pin Liu\\Department of Mathematics\\
	Southwest Jiaotong University\\
	610031 Chengdu \\
	P.R.China}
\email{pinliu@swjtu.edu.cn}
\keywords{gentle algebra, faithful dissection, tilting module, Oriented angle, $\PP$-free negatively oriented angle}

\begin{document}
\dedicatory{Dedicated to Professor Yanan Lin on the occasion of his 70th birthday}
\begin{abstract}
This paper investigates tilting modules over gentle algebras and their endomorphism algebras within the framework of marked surfaces and tilings introduced by Baur and Sim\~{o}es. Faithful dissections of a tiling are shown to correspond to tilting modules. For a faithful dissection, we define an auxiliary algebra and prove that it is isomorphic to the endomorphism algebra of the corresponding tilting module. We also construct a new tiling realizing this endomorphism algebra and introduce a  flip preserving tilting modules.
\end{abstract}

\maketitle	
\tableofcontents
\section{Introduction}\label{s:intro}

Tilting theory plays a fundamental role in the representation theory of finite-dimensional algebras. Its origins lie in the reflection functors introduced by Bernstein, Gelfand and Ponomarev \cite{BGP}, which were subsequently generalized by Auslander, Platzeck, and Reiten \cite{APR}. The axiomatic study of tilting modules was initiated by Brenner and Butler \cite{BB}, and the standard definition was later formulated by Happel and Ringel \cite{HR}. A fundamental result of Happel states that a tilting module over a finite-dimensional algebra induces a derived equivalence between the algebra and the endomorphism algebra of the tilting module. Thus, the structure of tilting modules and their endomorphism algebras is closely related to the study of derived categories and derived equivalences.


 Gentle algebras form an important class of finite-dimensional algebras arising naturally in surface theory, cluster theory, and homological mirror symmetry. Geometric models have proved particularly useful in the study of their representation theory; see, for instance, \cite{ABCJP,APS,BZ,CCS,CS,HKK,OPS,PPP}. 
 Baur and Sim\~{o}es \cite{BS} realized gentle algebras as tiling algebras and established a correspondence between indecomposable modules and permissible curves on marked surfaces equipped with tilings. This construction was extended to skew-gentle algebras by He, Zhou, and Zhu \cite{HZZ}, who introduced skew-tiling algebras and obtained a correspondence between tagged permissible curves and certain indecomposable modules. They also gave a geometric realization of $\tau$-tilting theory for skew-gentle algebras. The tilting theory of gentle algebras has also been studied by means of surface models. Chang \cite{Chang} proved that every almost complete $n$-tilting module over a gentle algebra is partial $n$-tilting and hence admits a complement to a $n$-tilting module. He also established a modified form of Happel's conjecture concerning the number of complements in the gentle setting. These results further demonstrate the usefulness of surface models in the study of tilting modules over gentle algebras.
 

The purpose of this paper is to give a geometric description of the endomorphism algebras of classical tilting modules over gentle algebras, using the geometric models introduced by Baur and Sim\~{o}es \cite{BS}. This approach differs from the models employed in \cite{Chang}. 

Fix a tiling $(\SS,\MM,\PP)$, and let $A_{\PP}$ be the associated gentle algebra. 
 A set of  permissible arcs in $(\SS,\MM,\PP)$  that do not intersect each other or themselves in the interior of $\SS$ will be called a {\it partial dissection} on $(\SS,\MM,\PP)$. A partial dissection is a {\it dissection} if it is maximal. By \cite{HZZ},  there is  a bijection between partial dissections and basic $\tau$-rigid $A$-modules which restricts to a bijection between dissections and basic support $\tau$-tilting $A$-modules. Furthermore, by \cite{Nie}, basic tilting $A_{\PP}$-modules correspond precisely to faithful dissections of $(\SS,\MM,\PP)$; see Theorem~\ref{t: tilting modules and faithful dissection}. 

Let $\RR$ be a partial dissection, and
for any two arcs $\gamma_1,\gamma_2\in \RR$, let $M(\gamma_1)$ and $M(\gamma_2)$ the associated indecomposable $\tau$-rigid $A_\PP$-modules. Using \cite[Proposition 3.19]{BS}, we find that the radical morphisms from  $M(\gamma_1)$ to $M(\gamma_2)$
are determined by the $\PP$-free negatively oriented angles from $\gamma_1$ to $\gamma_2$,  namely, those negatively oriented angles from $\gamma_1$ to $\gamma_2$ that are not divided by any arc in $\PP$ (see Subsection~\ref{ss:p-free negatively oriented angles}), which we simply denote by $\apfree(\gamma_1,\gamma_2)$. More precisely, we obtain the following result.
\begin{thm}\label{t:main resuls 1}(Theorem~\ref{t:basis of rad fromed by angles})
Let $\gamma_1,\gamma_2$ be two  permissible arcs on $(\SS,\MM,\PP)$ that do not intersect each other or themselves in the interior of $\SS$. Then 
$$\{f_{\angle\alpha}|\angle\alpha\in \apfree(\gamma_1,\gamma_2)\}$$
forms a basis of $\mathbf{rad}_{A_\PP}(M(\gamma_1),M(\gamma_2))$.
\end{thm}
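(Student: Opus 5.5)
The plan is to reduce the statement to the description of homomorphisms between string modules due to Crawley-Boevey, in the geometric form of \cite[Proposition 3.19]{BS}. Recall that for permissible curves $\gamma_1,\gamma_2$ the string modules $M(\gamma_1),M(\gamma_2)$ have a basis of $\Hom_{A_\PP}(M(\gamma_1),M(\gamma_2))$ indexed by \emph{admissible pairs}. Such a pair consists of a factor substring of $w(\gamma_1)$ that is also a submodule substring of $w(\gamma_2)$. Geometrically, such a pair is a common segment of $\gamma_1$ and $\gamma_2$: both curves cross the same consecutive arcs of $\PP$ in the same tiles. The segment must end, at each of its two sides, in a configuration where $\gamma_1$ leaves the common tiles ``before'' $\gamma_2$ with respect to the orientation. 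Proposition 3.19 of \cite{BS} phrases this via the relative position of the two curves at the ends of the shared part.

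The first step is to show the following. When $\gamma_1,\gamma_2$ do not cross each other or themselves in the interior of $\SS$, every admissible pair comes from a single \emph{common endpoint} $p\in\MM$ of $\gamma_1$ and $\gamma_2$, with the shared segment starting at $p$. The argument is that two non-crossing arcs that share a run of $\PP$-crossings and then separate must, on the other side of the run, either separate again or run into a common marked point. Separating on both sides in the ``wrong'' order would force an interior crossing. So on one side the shared segment must reach a common endpoint $p$. At $p$ the two arcs then bound an angle. The admissibility condition on the other end, where $\gamma_1$ turns away in the factor direction and $\gamma_2$ in the submodule direction, translates into this angle being \emph{negatively oriented from $\gamma_1$ to $\gamma_2$}. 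The condition that no arc of $\PP$ separates them at $p$ is exactly that the initial tile segments of both curves agree. Thus the resulting $\angle\alpha$ lies in $\apfree(\gamma_1,\gamma_2)$.

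Conversely, given $\angle\alpha\in\apfree(\gamma_1,\gamma_2)$ at $p$, I follow $\gamma_1$ and $\gamma_2$ from $p$ through the same tiles until they first diverge. Since $\angle\alpha$ is $\PP$-free, they start in the same tile. Because the angle is negatively oriented and there is no crossing, at the point of divergence $\gamma_1$ exits on the side that makes the common substring a factor string of $w(\gamma_1)$ and a substring of $w(\gamma_2)$. This gives an admissible pair, hence a map $f_{\angle\alpha}$. The two constructions are mutually inverse, so angles in $\apfree(\gamma_1,\gamma_2)$ biject with the admissible pairs, possibly after discarding the pair giving the identity when $\gamma_1=\gamma_2$.

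It remains to pass from $\Hom$ to $\mathbf{rad}$. The only non-radical maps between indecomposables are isomorphisms, which occur only when $\gamma_1=\gamma_2$ and correspond to the trivial admissible pair using the whole string. That pair is not an angle, because a full coincidence is not a proper negatively oriented angle. Hence the remaining $f_{\angle\alpha}$ lie in and span $\mathbf{rad}$. Linear independence is inherited from the Crawley-Boevey basis.

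I expect the main obstacle to be the first step: proving rigorously that, under the non-crossing hypothesis, no admissible pair arises from a shared segment lying entirely in the interior. This requires a careful case analysis of how two arcs can leave a common sequence of tiles. That includes degenerate cases, such as arcs with both endpoints equal, self-folded configurations, and boundary segments treated as arcs for the simple and projective-type modules. It also requires checking that the orientation convention for ``negatively oriented'' matches factor versus submodule in \cite{BS}.
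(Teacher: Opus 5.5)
Your proposal is correct and follows essentially the same route as the paper. Both arguments start from the basis of $\Hom$ in \cite[Proposition 3.19]{BS}, discard the identity pair to get a basis of $\mathbf{rad}$, use the non-crossing hypothesis to show that every remaining pair starts at a common endpoint where it determines a $\PP$-free negatively oriented angle, and use the injectivity of $\angle\alpha\mapsto f_{\angle\alpha}$ (Lemma~\ref{l:mor determined by angle}). The paper makes your ``first step'' explicit by a case analysis on whether the head and tail of $\widetilde{\gamma_1}$ and $\widetilde{\gamma_2}$ are empty, ruling out every configuration that would force an interior intersection. That analysis also covers the mixed situations where one arc ends at a marked point while the other continues, which your dichotomy ``separate again or run into a common marked point'' only treats loosely.
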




Let $\RR$ be a faithful dissection. We associate to $\RR$ an algebra $B_{\RR}$, whose quiver has vertices indexed by the arcs in $\RR$ and arrows indexed by the indecomposable negatively oriented angles with sides in $\RR$---that is, negatively oriented angles between arcs of $\RR$ not subdivided by any arc in $\PP\cup\RR$---which we refer to as \emph{$\PP\RR$-$\mathbf{free}$ negatively oriented angles}.
The relations of $B_{\RR}$ are determined by the local configurations of these angles. Denote the corresponding tilting module associated with $\RR$ by $M(\RR)=\bigoplus_{\gamma\in\RR}M(\gamma).$
  Our main result identifies $B_\RR$ with the endomorphism algebra $\End_{A_{\PP}}M(\RR)$ of the tilting module $M(\RR)$.
\begin{thm}\label{t:main result}(Theorem~\ref{t:iso for end})
    Let $(\SS,\MM,\PP)$ be a tiling and let $\mathbf{R}$ be a faithful dissection of $(\SS,\MM,\PP)$. Then there is an isomorphism $$B_{\RR}\cong \End_{A_{\PP}}M(\mathbf{R}).$$
\end{thm}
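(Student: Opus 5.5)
We will construct an explicit surjective algebra homomorphism $\Phi\colon kQ_{\RR}\to \End_{A_{\PP}}M(\RR)$, show that its kernel is generated by the relations defining $B_{\RR}$, and then compare dimensions. Here $Q_{\RR}$ is the quiver of $B_{\RR}$. On vertices, $\Phi$ sends the vertex $\gamma\in\RR$ to the idempotent $e_\gamma$ projecting $M(\RR)$ onto $M(\gamma)$. On arrows, an arrow $\angle\alpha\colon\gamma_1\to\gamma_2$ is a $\PP\RR$-free negatively oriented angle. Such an angle is in particular $\PP$-free, so it lies in $\apfree(\gamma_1,\gamma_2)$, and Theorem~\ref{t:main resuls 1} provides the radical morphism $f_{\angle\alpha}\colon M(\gamma_1)\to M(\gamma_2)$. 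We set $\Phi(\angle\alpha)=f_{\angle\alpha}$. Since the arcs of $\RR$ do not cross in the interior, Theorem~\ref{t:main resuls 1} applies to every pair of arcs in $\RR$, including the case $\gamma_1=\gamma_2$. Hence
$$\End_{A_{\PP}}M(\RR)=\bigoplus_{\gamma}k\,e_\gamma\oplus\bigoplus_{\gamma_1,\gamma_2}\mathbf{rad}(M(\gamma_1),M(\gamma_2)),$$
and the radical summands have the basis $\{f_{\angle\alpha}\}$ indexed by $\PP$-free negatively oriented angles.

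The first key step is a composition rule for these basis morphisms. We will show:
\begin{itemize}
\item if $\angle\alpha$ from $\gamma_1$ to $\gamma_2$ and $\angle\beta$ from $\gamma_2$ to $\gamma_3$ share a common vertex and concatenate to a $\PP$-free negatively oriented angle $\angle\alpha\beta$, then $f_{\angle\beta}f_{\angle\alpha}=f_{\angle\alpha\beta}$;
\item otherwise, that is, when the angles sit at different endpoints or the concatenated angle is divided by an arc of $\PP$, the composite is zero.
\end{itemize}
To prove this, we will use the explicit description of $f_{\angle\alpha}$ from \cite[Proposition 3.19]{BS} as a graph map between string modules, via factor and sub-strings determined by the common endpoint. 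Composition of graph maps then corresponds to intersecting the relevant substring overlaps. Fixing the correct normalisation of the $f_{\angle\alpha}$ is what makes the composite equal to exactly $f_{\angle\alpha\beta}$.

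From this rule we obtain surjectivity of $\Phi$. Any $\PP$-free negatively oriented angle between arcs of $\RR$ is subdivided by the arcs of $\RR$ lying inside it into a sequence of $\PP\RR$-free angles, all at the same vertex. Its basis morphism is therefore the image of the corresponding path, so $\Phi$ hits a basis of the radical. We will also check that the relations of $B_{\RR}$ lie in $\ker\Phi$. These relations come from local configurations: two consecutive $\PP\RR$-free angles whose concatenation is not $\PP$-free, or which sit at different endpoints of the middle arc. By the composition rule they map to zero, so $\Phi$ descends to a surjection $B_{\RR}\to\End M(\RR)$.

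The main obstacle is injectivity, equivalently the equality of dimensions. We need that the nonzero paths in $B_{\RR}$ correspond bijectively to $\PP$-free angles between arcs of $\RR$, together with the trivial paths. Two steps are required:
\begin{itemize}
\item Every path not killed by the relations is a maximal-consecutive chain of angles around a single marked point that never crosses an arc of $\PP$. Such a chain determines a unique $\PP$-free angle, and distinct chains give distinct angles.
\item The relations of $B_{\RR}$ suffice to kill all other paths. Here we must exclude long paths winding fully around a marked point, which is where finite-dimensionality and faithfulness of $\RR$ should be used. A full turn necessarily crosses a $\PP$-arc unless the point is special, and such points are handled by the tiling axioms.
\end{itemize}
Given both steps, $\dim B_{\RR}=\dim\End M(\RR)$, and the surjection is an isomorphism.
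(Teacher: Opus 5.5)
Your proposal follows the paper's proof essentially step for step. Both use the basis of radical morphisms indexed by $\PP$-free angles, and both use the composition rule: the composite is nonzero exactly for adjacent angles at a common vertex with the second on the clockwise side, and zero otherwise. Both then get generation by the $\PP\RR$-free angles, and both get injectivity by matching the nonzero paths of $B_\RR$ with distinct basis elements $f_{\angle\alpha}$. The paper phrases this last step as showing the kernel is generated by the length-two monomial relations rather than as a dimension count, but it is the same argument.

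Two small corrections. First, the ``winding'' obstacle is vacuous and needs no faithfulness or tiling axiom. Marked points lie on $\partial\SS$, so every complete fan is a finite linearly ordered sequence and no chain of adjacent angles can wrap around. Second, the nontrivial zero composites at a single vertex are the loop configurations of Figure~\ref{f:gamma_b is a loop} together with $\epsilon^2$. They are not ``concatenations divided by a $\PP$-arc'', which cannot occur because two adjacent $\PP$-free angles meet along an arc of $\RR$.
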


Since $M(\RR)$ is a tilting $A_{\PP}$-module, it follows that $B_{\RR}$ is derived equivalent to $A_{\PP}$. We also study a combinatorial operation on faithful dissections, called the \emph{tilting flip}. Given a faithful dissection $\RR$ and an arc $\gamma\in\RR$, the flip $\mu_\gamma(\RR)$ is obtained by replacing $\gamma$ with a uniquely determined permissible arc. A result of \cite{Nie} gives a criterion under which this operation preserves tilting modules; see Theorem~\ref{t:tilting flip}. We provide a simple alternative proof of this criterion. Finally, for each faithful dissection $\RR$, we construct a tiling $(\SS_{\RR},\MM_{\RR},\PP_{\RR})$ such that its associated tiling algebra $A_{\PP_{\RR}}$ satisfies
\[
A_{\PP_{\RR}}\cong \End_{A_{\PP}}M(\RR).
\]
Thus, we obtain two methods to construct algebras derived equivalent to $A_\PP$ from a faithful dissection $\RR$. Furthermore, these constructions can be iterated indefinitely, yielding an infinite family of algebras all derived equivalent to $A_\PP$.

The paper is organized as follows. In Section~\ref{s: gentle algebra}, we recall the necessary background on gentle algebras, tiling algebras, permissible arcs, and tilting theory. In Section~\ref{ss:tilting modules and faithful dissection}, we study the correspondence between tilting modules and faithful dissections. Section~\ref{s: end algebra} is devoted to the endomorphism algebras associated with faithful dissections. We introduce $\PP$-$\mathbf{free}$ and $\PP\RR$-$\mathbf{free}$ angles, describe bases of radical morphism spaces in terms of such angles, and prove the isomorphism of Theorem~\ref{t:main result}. In Section~\ref{s:tilting flip}, We study tilting flips and give a simple alternative proof of the criterion for preserving tilting modules. Finally, in Section~\ref{s:geo model of endo algebra}, we construct a tiling model for the endomorphism algebra and illustrate the construction with examples.

\section{Geometric models of gentle algebras}\label{s: gentle algebra}
Throughout, $K$ denotes an algebraically closed field. Let $Q$ be a quiver with vertex set $Q_0=\{1,\dots,n\}$ and arrow set $Q_1$. For $\alpha\in Q_1$, let $s(\alpha)$ and $t(\alpha)$ denote its source and target, respectively. Paths are read from left to right. For a finite-dimensional $K$-algebra $A$, let $\mmod A$ denote the category of finite-dimensional right $A$-modules.
\subsection{Gentle algebras}
Following \cite{AS87}, a finite-dimensional $K$-algebra $A=KQ/\langle I\rangle$ is called gentle if it satisfies the following conditions:
    \begin{itemize}
        \item [(G1)] For any vertex $a\in Q_0$, there are at most two incoming arrows and two outgoing arrows at $a$.
        \item [(G2)]For any arrow $\alpha\in Q_1$, there is at most one arrow $\beta$ such that $\alpha\beta\notin I$, and at most one arrow $\gamma$ such that $\gamma\alpha\notin I$.
        \item[(G3)] For any arrow $\alpha\in Q_1$, there is at most one arrow $\delta$ in $Q_1$ such that $\alpha\delta\in I$, and there is at most one arrow $\theta$ such that $\theta\alpha\in I$.
 	    \item [(G4)] $I$ is generated exactly by a finite set of paths of length two.
  \end{itemize}
    For each arrow $\alpha \in Q_1$, we associate a formal inverse $\alpha^{-1}$, with $s(\alpha^{-1})=t(\alpha)$ and $t(\alpha^{-1}) = s(\alpha)$. The set of formal inverses of arrows in $Q_1$ is denoted by $Q_1^{-1}$. Elements of $Q_1\cup Q_1^{-1}$ are called \emph{letters}.
A word $s=\alpha_1\dots \alpha_l$ of letters is called a \textit{string} of length $l$ if $s(\alpha_{i+1})=t(\alpha_{i})$, no subword or its inverse belongs to $I$, and $\alpha_i\neq \alpha_{i+1}^{-1}$ for all $1\leq i\leq l-1$.
A \emph{band} $b=b_1\dots b_n$ is a string with $s(b)=t(b)$ such that every power $b^m$ is a string, but $b$ itself is not a proper power of any other string.
 


 Each string $\mu$ defines a unique string module $M(\mu)$. 
Precisely, for any string $\mu=\mu_{1}\cdots \mu_{l},$ there is a function $\phi:\{0,1,\dots,l\}\to Q_0$ defined by
		\begin{equation}
			\phi(i)=\begin{cases}
				t(\mu_{i}),&\text{if } i\neq 0,\\
				s(\mu)      ,&\text{if } i=0.
			\end{cases}
		\end{equation}
		For $a\in Q_0$, let $\mathcal{I}_a=\{i\mid \phi(i)=a\}\subseteq\{0,1,\dots,l\}$. The string module $M(\mu)$ is the representation of $Q$ defined as follows:
		\begin{itemize}
			\item [$\bullet$]  For a vertex $a$,  $M_a=\bigoplus
   _{i\in \mathcal{I}_a} K_i$, where $K_i\cong K$ for each $i\in \mathcal{I}_a$.
			\item [$\bullet$]  For an arrow $\alpha$, with $s(\alpha)=a$ and $t(\alpha)=b$, the matrix $$M_\alpha:\bigoplus_{i\in \mathcal{I}_a}K_i\to\bigoplus_{j\in \mathcal{I}_b} K_j$$ is given by
   \begin{equation}
				(M_{\alpha})_{i,j}=\begin{cases}
					\id_K,&\text{if } j=i+1\text{ and } \mu_j=\alpha,\\
					\id_K,&\text{if } i=j+1\text{ and } \mu_i=\alpha^{-1},\\
					0,&\text{otherwise.}
				\end{cases}
    \end{equation}
		\end{itemize}
Thus $\dim_K M_a=|\mathcal{I}_a|$ for all $a\in Q_0$, and $M(\mu)\cong M(\mu^{-1})$ for every string $\mu$.

Each band $b$ gives rise to a family of band modules $M(b,m,\varphi)$ with $m\geq 1$ and $\varphi\in\Aut (K^m)$. All string and band modules are indecomposable, and every
indecomposable $A$-module is either a string module or a band module \cite{BR97}.

Following \cite{PPP}, for an arbitrary string $\mu=\mu_1\dots \mu_l$, 
a substring $\rho = \mu_i \cdots \mu_j$ of $\mu$ is called:
\begin{itemize}
    \item a \textit{top substring} of $\mu$ if $i = 1$ or $\mu_{i-1} \in Q_1^{-1}$, and $j = l$ or $\mu_{j+1} \in Q_1$. 
    The set of top substrings of $\mu$ is denoted by $\mathcal{T}(\mu)$.
    
    \item a \textit{bottom substring} of $\mu$ if $i = 1$ or $\mu_{i-1} \in Q_1$, and $j = l$ or $\mu_{j+1} \in Q_1^{-1}$. 
    The set of bottom substrings of $\mu$ is denoted by $\mathcal{B}(\mu)$.
\end{itemize}

    
        
Note that any string $\mu$ is both its own top string and its own bottom string, and $\mu^{\pm1}\in\Tcal(\mu)$ and $\mu^{\pm1}\in\Bcal(\mu)$.
 
 \begin{proposition}\cite{CB}\label{p:dim pro two strings}
		Let $A$ be a gentle algebra, $\mu$, $\omega$ be two strings. Then  $$\dim_{K}\Hom_{A}(M(\mu),M(\omega))=|\{(\rho_1,\rho_2)\in \mathcal{T}(\mu)\times \mathcal{B}(\omega)\mid {\rho_1}=\rho_2^{\pm 1}\}|.$$
        Furthermore, $\{f_\rho\mid (\rho,\rho^{\pm1})\in\Tcal(\mu)\times\Bcal(\omega)\}$ is a basis of $\Hom (M(\mu),M(\omega))$.
	\end{proposition}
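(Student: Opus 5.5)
The plan is to build, for each pair $(\rho,\rho^{\pm1})\in\Tcal(\mu)\times\Bcal(\omega)$, an explicit morphism $f_\rho$. Then I will show that these morphisms span $\Hom_A(M(\mu),M(\omega))$ and are linearly independent. This is the classical Crawley-Boevey argument, adapted to the string-module description fixed above.

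\emph{Construction.} Write $\rho=\mu_i\cdots\mu_j$, occupying the basis positions $K_{i-1},\dots,K_j$ of $M(\mu)$. Suppose it occurs in $\omega$ as $\omega_{i'}\cdots\omega_{j'}$, occupying $K_{i'-1},\dots,K_{j'}$; in the case $\rho=\rho_2^{-1}$ we first replace $\omega$ by $\omega^{-1}$, using $M(\omega)\cong M(\omega^{-1})$. Define $f_\rho$ to send each basis vector of the occurrence of $\rho$ in $\mu$ identically to the corresponding basis vector in $\omega$, and every other basis vector of $M(\mu)$ to $0$.

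\emph{$f_\rho$ is a morphism.} Commutativity with each $M_\alpha$ only needs checking at the two ends of $\rho$. At the left end, the top condition says $i=1$ or $\mu_{i-1}$ is inverse. In either case no arrow of $M(\mu)$ maps from outside the occurrence into $K_{i-1}$; so the map sends the factor module $M(\rho)$ onto the quotient. Dually, the bottom condition on $\omega$ ($i'=1$ or $\omega_{i'-1}$ direct) means that no arrow maps from the image of $K_{i'-1}$ to outside the occurrence. So $M(\rho)$ is a submodule of $M(\omega)$. Hence $f_\rho$ factors as $M(\mu)\twoheadrightarrow M(\rho)\hookrightarrow M(\omega)$, and the same analysis applies at the right end.

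\emph{Linear independence.} Order the pairs by the length of $\rho$ and by position. Each $f_\rho$ has a distinguished matrix entry, namely the basis vector at one end of its occurrence mapped to the corresponding basis vector. I will argue that distinct pairs give distinct nonzero entry patterns, so the matrices are independent. This is cleanest if one checks that two different pairs cannot agree on their full support sets.

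\emph{Spanning: the main obstacle.} Given an arbitrary $f:M(\mu)\to M(\omega)$, I would proceed by induction on $\dim$ of the image, or on the number of nonzero matrix coefficients $f(e_k)$ expressed in the basis $\{e'_l\}$. Pick a nonzero coefficient $c_{k,l}$ from $e_k$ to $e'_l$. Starting from the pair $(k,l)$, extend maximally in both directions while the letters of $\mu$ and $\omega$ agree. The commutativity relations, together with the gentle conditions (G2)--(G3), which guarantee that strings are determined by local letters, force the extension to stop exactly in a top/bottom configuration. Otherwise we would get a contradiction with $f$ commuting with some $M_\alpha$; this is because a direct arrow leaving the occurrence in $\mu$ but not in $\omega$ would map $c_{k,l}$ to a nonzero coefficient that must vanish. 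Consequently $c_{k,l}$ propagates as the constant coefficient along a pair $(\rho,\rho^{\pm1})$. Subtracting $c_{k,l}f_\rho$ then reduces the number of nonzero coefficients.

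The delicate point is to make this propagation argument rigorous, especially when $\rho$ occurs several times or overlaps itself. I would handle it by working with a coefficient that is extremal, for instance the one whose basis vector $e_k$ lies highest in the top of $M(\mu)$, so that the propagation is forced.
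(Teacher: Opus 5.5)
The paper does not prove this statement itself; it only cites Crawley-Boevey. Your architecture is the standard elementary route: graph maps $M(\mu)\twoheadrightarrow M(\rho)\hookrightarrow M(\omega)$, then independence, then spanning by peeling off graph maps. Your construction step is correct. The gap is in the spanning step, and the one concrete mechanism you give there points the wrong way. Write $M'_\alpha$ for the structure maps of $M(\omega)$.

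A direct letter $\mu_{j+1}=\alpha$ leaving the occurrence in $\mu$ is exactly what the top condition \emph{permits} at the right end. The relation $f(e_{j+1})=M'_\alpha f(e_j)$ it produces only determines $f(e_{j+1})$ and never forces a coefficient of $f(e_j)$ to vanish. The real obstructions are an arrow \emph{entering} the occurrence in $\mu$, or one \emph{leaving} it in $\omega$:
if $c_{k,l}\neq 0$ and $\mu_{k+1}=\alpha^{-1}$, then $f(e_k)=M'_\alpha f(e_{k+1})$ forces $e'_l$ into the image of $M'_\alpha$, i.e.\ $\omega_{l+1}=\alpha^{-1}$ or $\omega_l=\alpha$;
if $\omega_{l+1}=\beta$, the $e'_{l+1}$-coefficient of $M'_\beta f(e_k)=f(M_\beta e_k)$ is $c_{k,l}$, which forces $\mu_{k+1}=\beta$ or $\mu_k=\beta^{-1}$.
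The analogous statements hold on the left. Each alternative is an agreeing letter in one of the two orientations. You must still show that a forced continuation cannot switch orientation relative to the diagonal you are extending. This is exactly where the no-backtracking axiom for strings is used: for example, $\mu_k=\omega_l$ together with $\omega_l=\mu_{k+1}^{-1}$ would give $\mu_k=\mu_{k+1}^{-1}$. The gentle axioms (G2)--(G3) play no role.

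With these facts no extremal choice is needed. They also repair your independence step, where ``distinct support sets'' is not enough, since distinct $0/1$ patterns can be linearly dependent. Index everything by positions. Agreeing letters force equal coefficients, because each $M'_\alpha$ sends a basis vector to a single basis vector or to $0$, and no basis vector is hit twice. At most one orientation has an agreeing letter at a given $(k,l)$. Hence the maximal agreeing diagonal through a nonzero $c_{k,l}$ (in that orientation, if any) carries the constant value $c_{k,l}$. By the forcing facts, it ends in a top/bottom configuration on both sides.

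Conversely, at each end of a (top, bottom) occurrence pair the two neighbouring letters have opposite directions, or one is missing. So the support of every $f_\rho$ is a whole maximal agreeing diagonal. Therefore distinct $f_\rho$ have pairwise disjoint supports, which gives independence at once. Moreover $f-c_{k,l}f_\rho$ vanishes on that diagonal and agrees with $f$ elsewhere, so your induction on the number of nonzero coefficients closes. Repeated or overlapping occurrences of the same word $\rho$ are harmless, because what is counted are pairs of occurrences, not pairs of words.
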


\subsection{Marked surfaces and tilings}\label{ss:tiling}
A \emph{marked surface} is a pair $(\SS,\MM)$, where $\SS$ is a compact oriented surface with nonempty boundary $\partial \SS$,  and $\MM\subset \partial \SS$ is a finite set of marked points on the boundary.  We assume that if $\SS$ is a disk, then $\MM$ has at least four marked points.

A connected component of $\partial \SS$ is called a \emph{boundary component} of $\SS$. A boundary component $B$ of $\SS$ is called \emph{unmarked} if $\MM\cap B=\emptyset$. A \emph{boundary segment} is the closure of a component of $\partial \SS\setminus \MM$. 

An arc in $(\SS,\MM)$ is a continuous map $\gamma:[0,1]\to \SS$ such that
\begin{itemize}
 \item $\gamma(0),\gamma(1)\in \MM$; 
 \item $\gamma(t)\in \SS\backslash \MM$ for $0<t<1$;
    \item $\gamma$ is neither null-homotopic nor homotopic to a boundary segment.
\end{itemize}
Arcs on $\SS$ are considered up to homotopy relative to their endpoints and up to inverse, where the inverse of an arc $\gamma$ on $\SS$ is defined as $\gamma^{-1}(t)= \gamma(1-t)$ for $t\in[0,1]$. An arc $\gamma$ is called a loop if both endpoints coincide. Denote by $\mathbb{C}(\SS)$ the set of arcs on $\SS$. 
We always consider arcs up to isotopy relative to their endpoints. For any arcs $\gamma_1, \gamma_2\in \mathbb{C}(\SS)$, whenever we consider their intersections, we always assume that they are representatives in their homotopy classes such that the number of their intersections is minimal.

Following \cite{BS}, a \emph{partial triangulation} $\Pf$ is a collection of arcs that do not intersect each other or themselves in the interior of $\SS$. 

Let $\Pf$ be a partial triangulation of a marked surface $(\SS,\MM)$. The partial triangulation $\Pf$ decomposes $\SS$ into regions, called \emph{tiles}. We consider the following types of tiles (see Figure \ref{f:t1-3} and Figure \ref{f:t4-5}):
\begin{enumerate}
	\item[(I)] monogons with exactly one unmarked boundary component and no punctures in their interiors;
	\item[(II)] digons with exactly one unmarked boundary component and no punctures in their interiors;
    \item [(III)]three-gons bounded by two boundaries and one arc in $\Pf$, whose interiors contain no unmarked boundary component of $\SS$;
    \item [(IV)] $m$-gons whose edges are arcs in $\Pf$ and one boundary segment, with no unmarked boundary components or punctures in their interiors; 
      \item[(V)] $m$-gons ($m\geq 3$) whose edges are arcs in $\Pf$, with no unmarked boundary components or punctures in their interiors;
\end{enumerate}
\begin{figure}[h]
\begin{minipage}[t]{0.15\linewidth} 
\begin{tikzpicture}[xscale=0.8,yscale=0.8]

\draw[thick,fill=black!20] (2,0.17)arc (250:290:3);

\fill(3,0) circle(2pt);

\draw[thick,color=blue] (3,0) .. controls (2,-1) and (2,-2) .. (3,-2);
\draw[thick,color=blue] (3,0) .. controls (4,-1) and (4,-2) .. (3,-2);

\draw[thick,fill=black!20] (3,-1.3) circle(8pt);
\node at (3,-3){Type (I)};

\end{tikzpicture}
\end{minipage}
\begin{minipage}[t]{0.16\linewidth} 
\centering
		\begin{tikzpicture}[xscale=0.8,yscale=0.8]
\draw[thick,fill=black!20] (2,0.17)arc (250:290:3);

\fill(3,0) circle(2pt);

\draw[thick,fill=black!20] (2,-2.17)arc (110:70:3);
\fill(3,-2) circle(2pt);
\draw[thick,fill=black!20] (3,-1) circle(8pt);
\draw[thick,color=blue] (3,0) .. controls (2,-1) and (2.5,-1.5) .. (3,-2);
\draw[thick,color=blue] (3,0) .. controls (4.5,-1) and (4,-1.5) .. (3,-2);
\node at (3,-3){Type (II)};

\end{tikzpicture}
\end{minipage}
\begin{minipage}[t]{0.15\linewidth} 
\centering
		\begin{tikzpicture}[xscale=0.8,yscale=0.8]
\draw[thick,fill=black!20] (2,0.17)arc (200:240:3);
\draw[thick,fill=black!20] (2,0.17)arc (340:300:3);
\draw[blue,thick] (0.7,-1.4) to (3.3,-1.4);	
\fill(2,0.17) circle(2pt);
\fill(0.7,-1.4) circle(2pt);
\fill(3.3,-1.4) circle(2pt);
\node at (2,-3){Type (III)};

\end{tikzpicture}
\end{minipage}
\caption{Basic tiles of type (I)-(III)}\label{f:t1-3}
\end{figure}

\begin{figure}[h]
\begin{minipage}[t]{0.25\linewidth} 
  \centering
		\begin{tikzpicture}[xscale=0.5,yscale=0.5]
					
	 \draw[thick,fill=black!40] (-1.3,-2.15)arc (120:60:1.5);
			
	\draw[blue,thick] (0,2) to (-1,2);
	\draw[blue,thick] (-1,2)\nn to (-2,1.5)\nn;
	\draw[blue,thick] (-2,1.5)\nn to (-2.7,0.5)\nn;
			
	\draw[blue,thick] (-1,-2)\nn to (-2,-1.5)\nn;
	\draw[blue,thick] (-2,-1.5)\nn to (-2.7,-0.5)\nn;
		
	\draw[blue,thick] (0,2)\nn to (1,1.5)\nn;
	\draw[blue,thick] (1,1.5)\nn to (1.7,0.5)\nn;
			
	\draw[blue,thick] (0,-2)\nn to (1,-1.5)\nn;
	\draw[blue,thick] (1,-1.5)\nn to (1.7,-0.5)\nn;
        \draw[blue](-2.7,-0.4)node[above]{$\vdots$}(1.7,-0.4)node[above]{$\vdots$};

	\node at (0,-3){ Type (IV)};
			\end{tikzpicture}
		
		\end{minipage}
\begin{minipage}[t]{0.2\linewidth} 
\centering
		\begin{tikzpicture}[xscale=0.5,yscale=0.5]

		\draw[blue,thick] (0,2) to (-1,2);
		\draw[blue,thick] (-1,2)\nn to (-2,1.5)\nn;
		\draw[blue,thick] (-2,1.5)\nn to (-2.7,0.5)\nn;
	        \draw[blue,thick] (0,-2)\nn to (-1,-2)\nn;
		\draw[blue,thick] (-1,-2)\nn to (-2,-1.5)\nn;
		\draw[blue,thick] (-2,-1.5)\nn to (-2.7,-0.5)\nn;
		
		\draw[blue,thick] (0,2)\nn to (1,1.5)\nn;
		\draw[blue,thick] (1,1.5)\nn to (1.7,0.5)\nn;
			
		\draw[blue,thick] (0,-2)\nn to (1,-1.5)\nn;
		\draw[blue,thick] (1,-1.5)\nn to (1.7,-0.5)\nn;

		\draw[blue](-2.7,-0.4)node[above]{$\vdots$}(1.7,-0.4)node[above]{$\vdots$};

	\node at (0,-3){ Type (V)};
			\end{tikzpicture}
			
	\end{minipage}	
		
    \caption{Basic tiles of type (IV)-(V)} \label{f:t4-5}
\end{figure}

\begin{definition}[Tiling]\label{def:tiling}
A \emph{tiling} is a triple $(\SS, \MM, \Pf)$, where $(\SS, \MM)$ is a marked surface and $\Pf$ is a partial triangulation on $(\SS, \MM)$ whose tiles are all of type $\mathrm{(I)}$--$\mathrm{(V)}$.
\end{definition}


\subsection{Fans and angles on a marked surface} \label{ss: faithful dissection}
Let $(\SS,\MM)$ be a marked surface, 
$\mathcal{A}$ be a collection of arcs in $(\SS, \MM)$ (for instance, a partial triangulation $\PP$). 
\begin{definition}[Fans with respect to a collection of arcs]\label{def:fans}
For a marked point $p \in \MM$, let $m'$ and $m''$ be two points on the same boundary component as $p$ such that $m', m'' \notin \MM$ and $p$ is the unique marked point on the boundary segment $\delta$ between them. Let $c$ be a curve homotopic to $\delta$ that lies in a sufficiently small neighborhood of $p$ in which the arcs in $\mathcal{A}$ incident to $p$ are pairwise non-intersecting, except for its endpoints $m'$ and $m''$. The {\it complete fan} at $p$ with respect to $\mathcal{A}$ is the sequence of arcs $\gamma_1, \dots, \gamma_k \in \mathcal{A}$ intersected by $c$ in anticlockwise order. Any subsequence $\gamma_i, \dots, \gamma_j$ is called a {\it fan} (or {\it $\mathcal{A}$-fan}) at $p$. The {\it length} of the fan $\gamma_i, \dots, \gamma_j$ is defined as $j - i$. The arcs $\gamma_i$ and $\gamma_j$ are called the {\it sides} of the fan; moreover, $\gamma_i$ (resp.\ $\gamma_j$) is said to be the {\it start} (resp.\ {\it end}) of this fan. For $i < l < j$, the arc $\gamma_l$ is called an {\it interior arc} of the fan.
\end{definition}

\begin{definition}[(Oriented) angles]
Let $\alpha, \beta \in \mathcal{A}$ share a common endpoint $p$.
\begin{enumerate}
    \item An {\it (oriented) angle} (or {\it (oriented) $\mathcal{A}$-angle}) between $\alpha$ and $\beta$ at $p$ is a fan at $p$ of length at least $1$ with two sides $\alpha$ and $\beta$. The arcs $\alpha$ and $\beta$ are called the {\it sides} of the angle.
    
    \item A \textit{positive oriented angle} from $\alpha$ to $\beta$ at $p$ is an angle at $p$ with two sides $\alpha$ and $\beta$ such that $\beta$ is on the anti-clockwise side of $\alpha$.
    
    \item A \textit{negative oriented angle} from $\alpha$ to $\beta$ at $p$ is an angle at $p$ with two sides $\alpha$ and $\beta$ such that $\beta$ is on the clockwise side of $\alpha$.
    
    \item An angle is called \emph{indecomposable} if the length of its fan is $1$.
\end{enumerate}
\end{definition}

    

\begin{definition}[Addition of oriented angles]
Let $\angle\alpha$ and $\angle\beta$ be two oriented $\mathcal{A}$-angles. 
Suppose the fan of $\angle\alpha$ is the sequence $\gamma_1, \gamma_2, \dots, \gamma_s$ at a marked point $p_1$, 
and the fan of $\angle\beta$ is the sequence $\omega_1, \omega_2, \dots, \omega_t$ at a marked point $p_2$. 

We say $\angle\alpha$ and $\angle\beta$ are \emph{adjacent} if $p_1 = p_2$ and one of the following conditions holds:
\begin{itemize}
    \item[(C1)] $\omega_1 = \gamma_s$, and the concatenation $\gamma_1, \gamma_2, \dots, \gamma_s, \omega_2, \dots, \omega_t$ forms a fan at $p_1$;
    \item[(C2)] $\gamma_1 = \omega_t$, and the concatenation $\omega_1, \omega_2, \dots, \omega_t, \gamma_2, \dots, \gamma_s$ forms a fan at $p_1$.
\end{itemize}

If $\angle\alpha$ and $\angle\beta$ are adjacent, their \emph{sum}, denoted by $\angle\alpha + \angle\beta$, 
is the oriented angle whose fan is the concatenation given in (C1) or (C2), respectively.
Geometrically, the fan of $\angle\alpha + \angle\beta$ is the union of the fans of $\angle\alpha$ and $\angle\beta$.
\end{definition}

\subsection{Tiling algebras and gentle algebras}\label{s:def of tiling}
 Baur and Sim\~{o}es \cite{BS} introduced the finite-dimensional  \emph{tiling algebra} $A_{\Pf}$ associated with a tiling $(\SS,\MM,\Pf)$.
It is the quotient algebra $A_{\PP}=KQ_{\Pf}/\langle I_{\PP}\rangle$, where the quiver with relations $(Q_{\Pf}, I_{\Pf})$ is defined as follows:
\begin{itemize}
\item The vertex set $(Q_{\Pf})_0$ corresponds bijectively to the arcs in  $\Pf$;
\item An arrow $\alpha: i \to j$ exists in $(Q_{\Pf})_1$ whenever the arcs $i$ and $j$ share an endpoint $p_\alpha \in \MM$ and $j$ follows $i$ immediately in the anticlockwise direction around $p_\alpha$;
    \item The relation set $I_{\PP}$ consists of
    \begin{itemize}
        \item $\epsilon^2$, where $\epsilon$ is a loop;
        \item $\alpha\beta$ if $p_\alpha\neq p_\beta$, or the endpoints of the arc corresponding to $t(\alpha)=s(\beta)$ coincide ({\em cf.} Figure \ref{f:tiling-algebra}).
      
    \end{itemize}
      In other words, $\alpha\beta\in I_{\PP}$ if and only if either $p_{\alpha}\neq p_{\beta}$  or $p_{\alpha}=p_{\beta}$ and $t(\alpha)=s(\beta)$
corresponds to a loop arc.
\end{itemize}
\begin{figure}[h]
\begin{tikzpicture}

\draw[thick,fill=black!20] (2,0.17)arc (250:290:3);
\node at (3,0.2){$p_\alpha=p_\beta$};
\fill(3,0) circle(1.5pt);
\draw[thick,color=blue] (3,0) parabola (1,-2);
\draw[thick,color=blue,dashed] (1,-2) arc (150:260:1);
\draw[thick,color=blue,->] (2,-0.5)--(2.5,-1);
\node at (2.7,-1.2){\tiny{$t(\beta)$}};
\node at (1.9,-0.2){\tiny{$s(\beta)$}};
\node at (2,-1) {$\beta$};
\draw[thick,color=blue] (3,0) .. controls (2.6,-0.9) and (2.4,-1.1) .. (2,-2.2);
\draw[thick,color=blue] (3,0) .. controls (3.4,-0.9) and (3.6,-1.1) .. (4,-2.2);
\draw[thick,color=blue] (3,0) parabola (5,-2);
\draw[thick,color=blue,dashed] (5,-2) arc (30:-70:1);
\draw[thick,color=blue,->] (3.5,-1)--(4,-0.5);
\node at (4,-1) {$\alpha$};
\node at (3.3,-1.2){\tiny{$s(\alpha)$}};
\node at (4.1,-0.3){\tiny{$t(\alpha)$}};

\draw[thick,fill=black!20] (9,0.17)arc (250:290:3);
\node at (10,0.2){$p_\alpha=p_\beta$};
\fill(10,0) circle(1.5pt);
\draw[thick,color=blue] (10,0) parabola (8,-2);
\draw[thick,color=blue,dashed] (9,-2.2) arc (160:220:1);
\draw[thick,color=blue,->] (9,-0.5)--(9.5,-1);
\node at (9,-1) {$\alpha$};
\node at (9.7,-1.2){\tiny{$t(\alpha)$}};
\node at (8.9,-0.2){\tiny{$s(\alpha)$}};
\draw[thick,color=blue] (10,0) .. controls (9.6,-0.9) and (9.4,-1.1) .. (9,-2.2);
\draw[thick,color=blue] (10,0) .. controls (10.4,-0.9) and (10.6,-1.1) .. (11,-2.2);
\draw[thick,color=blue] (10,0) parabola (12,-2);
\draw[thick,color=blue,dashed] (11,-2.2) arc (20:-40:1);
\draw[thick,color=blue,->] (10.5,-1)--(11,-0.5);
\node at (11,-1) {$\beta$};
\node at (10.3,-1.2){\tiny{$s(\beta)$}};
\node at (11.1,-0.3){\tiny{$t(\beta)$}};
\end{tikzpicture}
\caption{ Case $\alpha\beta=0$ when $p_\alpha=p_\beta$}\label{f:tiling-algebra}
\end{figure}
\begin{rk}
   Our convention for the orientation of $(Q_{\Pf})_1$ is opposite to that of \cite{BS}: for arcs $i,j$ sharing an endpoint $p_\alpha\in\MM$, we have $\alpha\colon i\to j$ when $j$ follows $i$ immediately in the anticlockwise direction around $p_\alpha$, whereas \cite{BS} takes the clockwise direction.
\end{rk}

\begin{rk}\label{r: bijection between fans and paths}
By the definition of $A_{\PP}$, one can get that there  is a natural bijection between the  $\PP$-fans of $(\SS,\MM,\PP)$ and paths of $A_\PP$. In particular, there is a bijection between indecomposable $\PP$-angles and arrows of $Q_\PP$. 
\end{rk}

\begin{theorem}[{\cite[Theorem 2.10]{BS}}]\label{t:tiling algebra} 
An algebra is gentle if and only if it is a tiling algebra.
\end{theorem}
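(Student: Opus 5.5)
This is \cite[Theorem 2.10]{BS}. If I had to reprove it with the present conventions, I would prove the two implications separately. The key observation is that both gentle algebras and tilings are governed by local data. For a tiling, this is the cyclic order of arcs around each marked point. For a gentle algebra, it is the decomposition of the arrows into maximal nonzero paths (``permitted threads''), together with the complementary maximal paths of relations (``forbidden threads'').

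\emph{Tiling algebra $\Rightarrow$ gentle.} Let $(\SS,\MM,\PP)$ be a tiling. I would verify (G1)--(G4) for $(Q_\PP,I_\PP)$ directly from the definition.
\begin{itemize}
\item[(G1)] Every arc of $\PP$ has exactly two ends. At each end there is at most one arc immediately following it anticlockwise and at most one immediately preceding it. Hence every vertex has at most two outgoing and at most two incoming arrows.
\item[(G4)] This holds by construction.
\item[(G2), (G3)] Fix an arrow $\alpha$ ending at the end of $t(\alpha)$ lying at $p_\alpha$. The outgoing arrows of $t(\alpha)$ are at most two: one at the same end, at $p_\alpha$, and one at the other end. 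The composite is nonzero exactly for the arrow at $p_\alpha$, unless $t(\alpha)$ is a loop arc. In that case one must check, using the relation for loop arcs together with $\epsilon^2=0$, that still at most one composite is nonzero and at most one is zero.
\end{itemize}
Finite-dimensionality follows because every path is a $\PP$-fan (Remark~\ref{r: bijection between fans and paths}), fans at a marked point are finite, and the only possible infinite repetitions are killed by $\epsilon^2$ and by the loop-arc relations. I expect this direction to be routine case checking, with the loop-arc case being the only delicate point.

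\emph{Gentle $\Rightarrow$ tiling algebra.} Given a gentle $A=KQ/\langle I\rangle$, I would build a ribbon graph as follows.
\begin{itemize}
\item Take one edge for each vertex of $Q$.
\item Take one marked point for each permitted thread, meaning a maximal path not in $I$, including trivial threads at vertices with fewer than two incoming or outgoing arrows.
\item By gentleness, each vertex lies on exactly two permitted threads, counted with multiplicity. This tells us where to attach the two ends of each edge.
\item A permitted thread $a_1\to a_2\to\cdots\to a_k$ prescribes the anticlockwise order $a_1,\dots,a_k$ of edges at its marked point.
\end{itemize}
Thickening this ribbon graph gives an oriented surface in which the marked points lie on the boundary. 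The edges become a partial triangulation $\PP$ whose complete fans are exactly the permitted threads.

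The complementary regions are traced out by the forbidden threads. A forbidden thread that ends at trivial or boundary threads gives a region bounded by arcs and one boundary segment, which is a tile of type (IV) or (III). A cyclic forbidden thread gives a polygon bounded only by arcs. If it has at least three sides, I keep it as a tile of type (V). If it is too small, namely a monogon or digon, I glue in an unmarked boundary component, producing a tile of type (I) or (II).

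By construction, arrows of $Q$ correspond to indecomposable $\PP$-angles, and relations of length two correspond to pairs of consecutive angles at different marked points or through a loop arc. Hence $A\cong A_\PP$.

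The main obstacle is the bookkeeping in the second direction. One must show that every complementary region really is one of the types (I)--(V), and that the relations $\epsilon^2$ and those through loop arcs are reproduced exactly. I would handle this by classifying forbidden threads into linear and cyclic ones and treating small cycles (length $1$ or $2$) separately. In the small-cycle case, the marked-surface condition (at least four marked points on a disk) might require adjusting the construction.
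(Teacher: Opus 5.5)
The paper gives no argument for this statement; it only quotes \cite[Theorem 2.10]{BS}. Your sketch therefore has to stand on its own, and in the direction ``gentle $\Rightarrow$ tiling'' it has a genuine gap. If the marked points are exactly the permitted threads, then every complementary region of the thickened marked ribbon graph is one of two kinds. Either it is an annulus around an unmarked boundary component, one for each cyclic forbidden thread. Or it is a disc containing \emph{exactly one} boundary segment. To see this, count: every vertex lies on exactly two permitted and two forbidden threads, and $A$ is finite-dimensional. Hence the linear forbidden threads, the permitted threads, the marked points and the boundary segments all number $2|Q_0|-|Q_1|$, and each linear forbidden thread's region reaches the boundary. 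Consequently a type (III) tile, which has \emph{two} boundary segments, never occurs, contrary to what you assert.

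Worse, every trivial forbidden thread at a vertex $v$ yields a digon bounded by the edge $v$ and a single boundary segment. Such threads occur at any source with one outgoing arrow, any sink with one incoming arrow, and any vertex with one arrow $\beta$ in and one arrow $\gamma$ out with $\beta\gamma\in I$. So $v$ is homotopic to a boundary segment and is not an arc at all. Concretely, $A=K$ gives a disc with two marked points, $K(1\to 2)$ a disc with three, and $K[x]/(x^2)$ an annulus whose loop is homotopic to the boundary segment formed by the whole marked boundary component. The missing step is to put one extra marked point, with no arc attached, on the boundary segment of each such digon. This produces exactly the type (III) tiles and leaves $Q_\PP$ and $I_\PP$ unchanged. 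After this, the ``at least four marked points on a disc'' condition is automatic, since each side of an arc in a disc then contains a further marked point. Your worry about small cycles is misplaced: those regions contain unmarked boundary components, so the surface is not a disc there.

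Three smaller points also need fixing.
\begin{enumerate}
\item The thickening produces annuli for \emph{all} cyclic forbidden threads, not polygons. So you must cap off by a disc the unmarked boundary components coming from cycles of length $\geq 3$, which gives type (V). Keep them only for lengths $1$ and $2$, which give types (I) and (II).
\item Your criterion for trivial permitted threads is wrong. One is needed at $v$ exactly when $v$ has at most one incoming arrow $\beta$ and at most one outgoing arrow $\gamma$, with $\beta\gamma\notin I$ if both exist; an isolated vertex needs two. With your rule, the middle vertex of $1\xrightarrow{\beta}2\xrightarrow{\gamma}3$ with $\beta\gamma\in I$ would lie on three threads, and its edge would need three ends.
\item In the direction ``tiling $\Rightarrow$ gentle'', your loop-arc check only works with the end-wise reading of the relation shown in Figure~\ref{f:tiling-algebra}: a composite through a loop arc vanishes if and only if the two arrows sit at different ends of it. Read literally, the wording of $I_\PP$ kills both composites through a loop arc, and (G3) fails.
\end{enumerate}
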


\subsubsection{Permissible arcs} Let $(\SS,\MM,\PP)$ be a tiling and $\Delta$ a tile.
An {\it irreducible arc segment} in $\Delta$ is a curve $\eta: [0,1]\to \Delta$ such that  $\eta(0)$ and $\eta(1)$ lie on the edges of $\Delta$
and $\eta(t)(\text{for } 0<t<1)$ lies in the interior of $\Delta$. 
 An irreducible arc segment $\eta$ in $\Delta$ is called {\it permissible} with respect to $\PP$ if it satisfies one of the following conditions ({\em cf.} \cite[Definition 2.1]{HZZ} and compare \cite[Definition~3.1]{BS}).
	\begin{enumerate}
		\item[(P1)] One endpoint $P$ of $\eta$ is in $\MM$ and the other endpoint $Q$ lies in the interior of a non-boundary edge, say $\gamma$, of $\Delta$, such that 
        \begin{itemize}
            \item $\eta$ is not isotopic to a segment of an edge of $\Delta$ relative to its endpoints, and
            \item after moving $P$ along the edges of $\Delta$ in anticlockwise order to the next marked point, say $P'$, the new arc segment obtained from $\eta$ is isotopic to a segment of $\gamma$ relative to its endpoints.
        \end{itemize} See Figure~\ref{f:P1} for all possible cases of permissible irreducible arc segments satisfying (P1).
		
		\begin{figure}[h]
			\begin{tikzpicture}[xscale=0.8,yscale=0.8]
				\draw[thick,color=blue] (1,0) .. controls (1.3,1.5) and (1.3,1.5) .. (1,3);
				\draw[thick,color=blue] (1,3)node[black,above]{$P'$} .. controls (2,2) and (2, 2) .. (2.5,1.5);
				\draw[thick,color=blue,dotted] (1,0) .. controls (1.75, 0.6) and (1.75,0.6) .. (2.5,1.5);
				\draw[thick,color=red] (2.5,1.5)node[black,right]{$P$} tonode[below]{$\eta$} (1.23,1.5);
				\fill(2.5,1.5) circle(1.5pt); \fill(1,3) circle(1.5pt);
				\draw[blue] (1,1)node{$\gamma$} (1,1.5)node[blue]{$Q$};	
			\end{tikzpicture}\qquad
			\begin{tikzpicture}[xscale=0.8,yscale=0.8]
				\draw[thick,color=blue] (7,0) .. controls (8,0.8) and (8,2.2) .. (7,3)node[black,above]{$P'$};	
				\draw[thick,color=blue] (7,0) .. controls (6,0.8) and (6,2.2) .. (7,3);	
				\fill(7,0) circle(1.5pt);		\fill(7,3) circle(1.5pt);	
				\draw[thick,fill=black!20] (7,1.5) circle(6pt);
				\draw[thick,color=red] (7,0)node[black,below]{$P$} .. controls (8.3,1.5) and (7,2.5) .. (6.55,2.5)node[blue,left]{$Q$};
				\draw[red] (7,2)node{$\eta$};
				\draw[blue] (6,1)node{$\gamma$};
			\end{tikzpicture}\qquad
		\begin{tikzpicture}[xscale=0.8,yscale=0.8]
				\draw[thick,color=blue] (13,0) .. controls (14,0.05) and (14,2) .. (13,3)node[black,above]{$P=P'$};
				\draw[thick,color=blue] (13,0) .. controls (12,0.05) and (12,2) .. (13,3);
				\draw[thick,color=red] (13,1) .. controls (12.5,1.05) and (12.6,2) .. (13,3);	
				\draw[thick,color=red] (13,1) .. controls (13.5,1.05) and (14,2.5) .. (12.4,2)node[black,left]{$Q$};	
				\draw[thick,fill=black!20] (13,1.5) circle(6pt);
				\fill(13,3) circle(1.5pt);
				\draw[red] (13,2.25)node{$\eta$};
				\draw[blue] (12,1)node{$\gamma$};
		\end{tikzpicture}
			\caption{Condition (P1)}\label{f:P1}
		\end{figure}
		
		\item[(P2)] The endpoints of $\eta$ are in the interiors of non-boundary edges $x,y$ (which are possibly not distinct) of $\Delta$ such that 
        \begin{itemize}
            \item $\eta$ has no self-intersections; 
            \item $x$ and $y$ have a common endpoint $p_\eta\in \MM$;
            \item $\eta$ cuts out an $\PP$-angle from $\Delta$ as shown in Figure \ref{f:PAS}. 
        \end{itemize}
        We denote by $\triangle(\eta)$ the local triangle cut out by $\eta$ and $\angle \eta$ the angle opposite to $\eta$ in the local triangle $\triangle(\eta)$.
		\begin{figure}[htpb]
			\begin{tikzpicture}[xscale=0.8,yscale=0.8]
				\draw[thick,fill=black!20] (9,0.17)arc (250:290:3);
				\node at (10,0.2){$p_{\eta}$};
				\fill(10,0) circle(1.5pt);
				\draw[thick,color=blue] (10,0) .. controls (8.6,-0.9) and (8.4,-1.1) .. (8,-2.2);
				\draw[thick,color=blue,dashed] (8,-2.2) -- (7.8,-3);
				\draw[thick,color=blue] (10,0) .. controls (10.4,-0.9) and (10.6,-1.1) .. (11,-2.2);
				\draw[thick,color=blue,dashed] (11,-2.2) arc (20:-40:1);
				\draw[thick,color=red] (8,-1) .. controls (9,-1.5) and (10,-1) .. (12,-1);
				\draw[thick,color=red,dashed] (12,-1) arc (90:50:1.5);
				\draw[thick,color=red,dashed] (8,-1) -- (7,-0.5);
				\node at (9.5, -1.5){$\eta$};
				\node at (7.5,-3){{\color{blue}$x$}};
				\node at (10.5, -3){{\color{blue}$y$}};
				\draw (9.7,-.8)node{$\triangle(\eta)$};
			\end{tikzpicture}
			\caption{Condition (P2)}\label{f:PAS}
		\end{figure}
	\end{enumerate}
	Every arc is assumed to be in minimal position with respect to $\PP$. The arc $\gamma$ is divided by $\PP$ into irreducible arc segments.
    \begin{definition}[{\cite[Definition 3.1]{BS} and \cite[Definition 2.2]{HZZ}}]
An arc $\gamma$ on $\SS$ is called permissible (with respect to $\PP$) if each irreducible arc segment of $\gamma$ obtained by cutting along $\PP$ is permissible.
 \end{definition}

For a permissible arc $\gamma$, choose an orientation of $\gamma$.
Suppose that $\gamma$ successively crosses
\[
\ba_1,\ba_2,\ldots,\ba_m\in\PP .
\]
Denote by
\[
\gamma^{(1)},\gamma^{(2)},\ldots,\gamma^{(m+1)}
\]
the corresponding arc segments of $\gamma$ obtained by cutting along $\PP$.
The first and last segments, $\gamma^{(1)}$ and $\gamma^{(m+1)}$, are called the
\emph{end arc segments} of $\gamma$.
Two end arc segments $\delta_1,\delta_2$ of permissible arcs are called
\emph{homotopic} if one endpoint of $\delta_1$ and $\delta_2$ is the same boundary marked point and their other endpoints lie on the same arc of $\PP$.

 We denote by $\PS\subset\mathbb{C}(\SS)$ the set of permissible arcs on $\SS$. 
 The following result provides a geometric model for gentle algebras.
\begin{theorem}[{\cite[Theorem 3.8]{BS}}]\label{t:permissible arcs and strings} 
 Let $(\SS,\MM,\Pf)$ be a tiling and $A_{\Pf}$ be its associated tiling algebra. Then there is a bijection between the  non-trivial permissible arcs in $(\SS,\MM,\Pf)$ and the non-zero strings of $A_{\Pf}$.
\end{theorem}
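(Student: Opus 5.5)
We need to prove Theorem \ref{t:permissible arcs and strings}: there is a bijection between non-trivial permissible arcs and non-zero strings of $A_\PP$. Since this is cited from \cite{BS}, the plan is to reconstruct the map and its inverse directly from the definitions of permissible arc segments and of $(Q_\PP,I_\PP)$.

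\textbf{Construction of the map.} Take a permissible arc $\gamma$ in minimal position with respect to $\PP$, oriented, crossing $\ba_1,\dots,\ba_m\in\PP$ in this order, with segments $\gamma^{(1)},\dots,\gamma^{(m+1)}$. Each interior segment $\gamma^{(i+1)}$, $1\le i\le m-1$, satisfies (P2): its endpoints lie on $\ba_i$ and $\ba_{i+1}$, which share the marked point $p_{\gamma^{(i+1)}}$, and it cuts out the $\PP$-angle $\angle\gamma^{(i+1)}$ between them. By Remark \ref{r: bijection between fans and paths}, this angle is a $\PP$-fan at $p$ and hence corresponds to a path in $Q_\PP$ from $\ba_i$ to $\ba_{i+1}$ or from $\ba_{i+1}$ to $\ba_i$, depending on whether the fan runs anticlockwise or clockwise. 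The fan is contained in a single tile, which forces its length to be $1$ except in the degenerate loop/monogon configurations. We therefore assign to the crossing the letter $\alpha_i$ or $\alpha_i^{-1}$ and set $w(\gamma)=\alpha_1^{\pm}\cdots\alpha_{m-1}^{\pm}$. When $m=1$ we instead take the trivial string $e_{\ba_1}$. The end segments satisfy (P1) and carry no letters; their role is to make the string maximal in a suitable sense, and they do not change it.

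\textbf{$w(\gamma)$ is a string.} Consecutive letters are composable because they share the vertex $\ba_i$. We have $\alpha_i\ne\alpha_{i+1}^{-1}$ by minimal position: otherwise $\gamma$ would enter and leave the same angle, giving a bigon with $\ba_i$. For the key condition, suppose two consecutive letters have the same direction, say $\alpha\beta$ direct. Then the angles $\angle\gamma^{(i)}$ and $\angle\gamma^{(i+1)}$ lie on the two sides of $\ba_i$. Because $\gamma$ crosses $\ba_i$ transversally and the orientation is consistent, the two angles must sit at the same endpoint of $\ba_i$, so $p_\alpha=p_\beta$. Moreover, if $\ba_i$ were a loop, the crossing would force a forbidden configuration as in Figure \ref{f:tiling-algebra}. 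By the description of $I_\PP$, this gives $\alpha\beta\notin I_\PP$. Conversely, whenever $p_\alpha\ne p_\beta$, the letters must alternate in direction. I expect this local case check, including loops and the type (I)/(II) tiles around unmarked boundary components, to be the main obstacle.

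\textbf{Inverse map.} Given a string $s=\alpha_1^{\pm}\cdots\alpha_l^{\pm}$, the vertices visited give the sequence of arcs $\ba_1,\dots,\ba_{l+1}$. Each letter gives an angle of $\PP$ at $p_{\alpha_i}$, and we draw a (P2) segment across it. The string conditions ensure that consecutive segments glue across $\ba_{i+1}$ without backtracking. At the two ends, the (P1) condition uniquely determines the end segment from $\ba_1$ (respectively $\ba_{l+1}$): one moves to the marked point reached by rotating so that no further letter could be appended. This works because the tiles are of types (I)–(V), so each tile has a well-defined "next marked point anticlockwise". Finally, one checks that the resulting curve is an arc in minimal position and not homotopic to a boundary segment. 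Minimal position holds because there are no bigons, which follows from the absence of backtracking; and the curve is not homotopic to a boundary segment because it crosses $\PP$.

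\textbf{Bijectivity.} Both compositions are the identity, since the letters determine the segments up to isotopy within each tile. The choice of orientation corresponds to $s\leftrightarrow s^{-1}$, which is compatible with arcs being taken up to inverse and with $M(\mu)\cong M(\mu^{-1})$.
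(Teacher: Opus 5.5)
The paper gives no proof of its own here (it simply quotes \cite[Theorem 3.8]{BS}), and your reconstruction is the standard argument behind that result, so it is essentially correct: one arrow or inverse arrow per interior (P2) segment; the side-of-arc check showing that two same-direction letters sharing $\ba_i$ sit at the same end of $\ba_i$, and hence avoid the length-two relations of $I_\PP$; and the (P1) rule fixing each end segment uniquely. Three glosses need fixing: the (P1) end segments have nothing to do with making the string ``maximal'' (every string, extendable or not, gets exactly one arc, and (P1) merely selects a canonical endpoint in the adjacent tile); the fan cut out by a (P2) segment always has length exactly one, being a corner of a tile, even in the monogon case; and when $\ba_i$ is a loop, the transversal crossing rules out, rather than forces, the zero-relation configuration of Figure~\ref{f:tiling-algebra}.
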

By this result, in the following, we identify the  nontrivial permissible arcs in $(\SS,\MM,\Pf)$ and the nonzero strings of $A_{\Pf}$.

\subsection{Support $\tau$-tilting modules and tilting modules}
Let $A$ be a finite-dimensional algebra, and let $\tau$ be the Auslander-Reiten translation of $\text{mod}A$. For any $M\in\text{mod}A$, denote by $|M|$ the number of pairwise non-isomorphic indecomposable summands of $M$.
\begin{definition} Let $M\in\mmod A$.
 \begin{enumerate}
    \item $M$ is called {\it rigid} if $\Ext^1_A(M,M)=0$;
    \item $M$ is called {\it partial tilting} if $M$ is rigid and $\proj.\dim M\le 1$;
    \item $M$ is called {\it tilting} if $M$ is rigid, $\proj.\dim M\le 1$ and $|M|=|A|$;
    \item $M$ is called {\it almost tilting} if $M$ is rigid, $\proj.\dim M\le 1$ and $|M|=|A|-1$;
    \item $M$ is called {\it $\tau$-rigid}, if $\Hom_{A}(M,\tau M)=0$;
    \item $M$ is called {\it $\tau$-tilting} if it is $\tau$-rigid and $|M|=|A|$;
    \item A {\it $\tau$-rigid} pair is a pair $(M,P)$ with $M \in\text{mod}A$ and $P$ a finitely generated projective $A$-module, such that $M$ is $\tau$-rigid and $\Hom_A(P,M) = 0$;
    \item A $\tau$-rigid pair $(M,P)$ is called a {\it $\tau$-tilting pair} if $|M| + |P| = |A|$. In this case, $M$ is called a {\it support $\tau$-tilting module}.
   
\end{enumerate}     
\end{definition}


Every $\tau$-rigid $A$-module is rigid. In the following, we denote by
\begin{itemize}
    \item $\opname{ind~\tau-rigid } A$: the set of isomorphism classes of indecomposable $\tau$-rigid $A$-modules;
    \item $\opname{\tau-rigid } A$: the set of isomorphism classes of basic $\tau$-rigid $A$-modules;
    \item $\opname{\tau-tilting}  A$: the set of isomorphism classes of basic $\tau$-tilting $A$-modules;
    \item $\opname{s\tau-tilt }  A$: the set of isomorphism classes of basic support $\tau$-tilting modules;
\end{itemize}
We recall the relation between faithfulness and tilting in $\tau$-tilting theory. This result will motivate the geometric notion of faithfulness introduced below.
\begin{proposition}[{\cite[Proposition 1.4]{AIR} and \cite[VIII.5.1]{ASS}}]\label{p:realtion between tau tilting module and tilting module}
    \begin{itemize}
        \item [(a)] Any faithful $\tau$-rigid $A$-module is a partial tilting $A$-module.
        \item [(b)] Any faithful $\tau$-tilting $A$-module is a tilting $A$-module.
    \end{itemize}
\end{proposition}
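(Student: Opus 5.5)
The plan is to reduce both parts to two standard homological facts: the Auslander--Reiten formulas and the characterization of projective dimension at most one via $\tau$.

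\textbf{Step 1: faithfulness.} First I will use that a faithful module generates the injective cogenerator. Since $\operatorname{Ann}_A M=0$ and $M$ is finite dimensional, choose a $K$-basis $m_1,\dots,m_n$ of $M$. The map $A\to M^n$, $a\mapsto (m_1a,\dots,m_na)$, is then injective. So $A$ is cogenerated by $M$, and by the standard duality argument (\cite[VI.2]{ASS}) this is equivalent to $DA$ being generated by $M$. Concretely, I want an epimorphism $M^m\twoheadrightarrow DA$, which I expect to obtain by citing that lemma rather than reproving it.

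\textbf{Step 2: partial tilting, part (a).} Recall the criterion \cite[IV.2.7]{ASS}: $\proj.\dim M\le 1$ if and only if $\Hom_A(DA,\tau M)=0$. Applying $\Hom_A(-,\tau M)$ to $M^m\twoheadrightarrow DA$ gives an injection
\[
\Hom_A(DA,\tau M)\hookrightarrow \Hom_A(M,\tau M)^m=0,
\]
so $\proj.\dim M\le 1$. Rigidity follows from the Auslander--Reiten formula $\Ext^1_A(M,M)\cong D\overline{\Hom}_A(M,\tau M)$. The right-hand side is a quotient of $\Hom_A(M,\tau M)=0$, so it vanishes. Hence $M$ is partial tilting; this is also the remark that every $\tau$-rigid module is rigid.

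\textbf{Step 3: tilting, part (b).} If $M$ is in addition $\tau$-tilting, then $|M|=|A|$. By Step 2, $M$ is rigid with $\proj.\dim M\le1$, which is exactly the definition of tilting. No further argument is needed.

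The only step with real content is Step 1, namely passing from $\operatorname{Ann}M=0$ to $DA\in\opname{Gen}M$. Dualizing $A\hookrightarrow M^n$ naively produces a statement about left modules, so the correct route is the characterization in \cite{ASS} that a module is faithful if and only if it generates $DA$. The remaining steps are direct consequences of the AR formulas.
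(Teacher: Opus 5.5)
Your proof is correct and is essentially the standard argument of \cite[Proposition 1.4]{AIR}, which the paper cites without reproducing. Faithfulness gives an epimorphism $M^m\twoheadrightarrow DA$, so $\Hom_A(DA,\tau M)\hookrightarrow\Hom_A(M,\tau M)^m=0$ forces $\proj.\dim M\le 1$; rigidity comes from the Auslander--Reiten formula; and (b) follows at once from $|M|=|A|$. (For Step 1 you can avoid the citation: $M_A$ is faithful if and only if the left module $DM$ is faithful, so ${}_AA\hookrightarrow (DM)^n$, and applying $D$ gives the required $M^n\twoheadrightarrow D({}_AA)=DA$.)
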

The following result motivates the use of surface models in the study of the endomorphism algebras of tilting modules over gentle algebras.
\begin{lemma}[{\cite[Theorem 1.1]{S}}]\label{l:endomorphism algebra is gentle}
	Let $A$ be a finite-dimensional gentle algebra over $K$ and ${M}$ a rigid $A$-module. Then the endomorphism algebra $\End_A (M)$ is a gentle algebra.
\end{lemma}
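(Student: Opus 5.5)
The plan is to make the combinatorics of string modules explicit, and then read off the gentle relations from the basis of homomorphisms in Proposition~\ref{p:dim pro two strings}.

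\emph{Reductions.} Passing to the basic part of $M$ changes $\End_A(M)$ only up to Morita equivalence. Moreover, gentleness is a property of the basic algebra, so we may assume $M=\bigoplus_{i=1}^r M_i$ is basic.

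\emph{No band summands.} By \cite{BR97} each $M_i$ is a string or band module, and we first exclude band summands. For a band module $N=M(b,m,\varphi)$ one has $\tau N\cong N$, and $N$ is not injective. The Auslander--Reiten formula $\Ext^1_A(N,N)\cong D\overline{\Hom}_A(N,\tau N)$ then gives $\Ext^1_A(N,N)\neq 0$: the identity of $N$ does not factor through an injective. Hence $M_i=M(\mu_i)$ for strings $\mu_i$.

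\emph{Basis of graph maps.} By Proposition~\ref{p:dim pro two strings}, $\End_A(M)$ has a $K$-basis of graph maps $f_\rho\colon M(\mu_i)\to M(\mu_j)$, one for each pair $(\rho,\rho^{\pm1})\in\Tcal(\mu_i)\times\Bcal(\mu_j)$. We will use the standard composition rule for graph maps: $f_{\sigma}\circ f_{\rho}$ is either $0$ or a single graph map $f_{\rho\cap\sigma}$, where the substrings overlap. Consequently $\End_A(M)$ has a multiplicative basis with zero. It follows that $\End_A(M)\cong KQ'/I'$, where
\begin{itemize}
\item $Q'$ has vertices $1,\dots,r$;
\item the arrows of $Q'$ are the graph maps that are irreducible in $\opname{add} M$, i.e.\ those not factoring as a nonzero composite of radical basis maps;
\item $I'$ is generated by the zero composites together with possible binomial relations $p-q$ between parallel paths representing the same basis element.
\end{itemize}

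\emph{Local counting (G1)--(G3).} Fix $M(\mu_i)$. A radical graph map out of $M(\mu_i)$ is determined by a proper top substring $\rho$ of $\mu_i$, i.e.\ by cutting $\mu_i$ on the left, on the right, or on both sides. We will show that the irreducible ones are at most two: one ``maximal'' map shrinking from each end of $\mu_i$. Dually, at most two irreducible maps enter $M(\mu_i)$, which gives (G1).

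For (G2) and (G3), given an arrow $f_\rho\colon i\to j$ "cutting from the left end", the only arrow out of $j$ composing nonzero with it is the one continuing to cut on the same side. The arrow cutting from the opposite side composes to zero or to a non-basis overlap, and this yields the length-two zero relation. This is a careful but routine case analysis on substrings.

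\emph{Main obstacle: (G4).} The hard step is showing that $I'$ contains no binomial (commutativity) relations, so that it is generated by paths of length two. I would argue by contradiction. Suppose two different paths of irreducible graph maps from $M(\mu_i)$ to $M(\mu_j)$ compose to the same nonzero $f_\rho$. Then one path trims $\mu_i$ first on the left, the other first on the right. Taking the intermediate summands $M(\mu_k)$ and $M(\mu_l)$ on the two paths, their strings overlap in a way that produces a nonsplit extension. Concretely, the two-sided overlap of $\mu_k$ and $\mu_l$ gives a "crossing" pair of substrings, which by the string-combinatorial description of $\Ext^1$ (equivalently, of $\overline{\Hom}(M(\mu_k),\tau M(\mu_l))$) yields $\Ext^1_A(M(\mu_k),M(\mu_l))\neq0$ or $\Ext^1_A(M(\mu_l),M(\mu_k))\neq0$. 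This contradicts the rigidity of $M$. The same argument also guarantees that a path of length at least $2$ is nonzero exactly when each consecutive length-two subpath is nonzero.

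If the direct string argument is messy, I would instead translate to the surface: interpret the $\mu_i$ as permissible arcs via Theorem~\ref{t:permissible arcs and strings}, and use rigidity to exclude the interior crossings that a commutative square would force. Finally, finite dimensionality of $\End_A(M)$ is automatic, so $\End_A(M)$ is gentle.
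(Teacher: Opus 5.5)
\textbf{There is a genuine gap: rigidity is never turned into a usable combinatorial statement.} The paper gives no proof here (the statement is quoted from \cite{S}), so your plan has to stand on its own. Your overall strategy is reasonable, and the preliminary steps are correct:
\begin{itemize}
\item bands are excluded by $\tau N\cong N$ together with the Auslander--Reiten formula;
\item a composite of graph maps is either $0$ or a graph map, so $\End_A(M)$ has a multiplicative basis and a presentation by monomial and binomial relations.
\end{itemize}
The problems are in the local analysis.
\begin{itemize}
\item \emph{(G1)--(G3) are not routine and need rigidity.} You present them as a count that does not use rigidity, but they fail without it. Take a zigzag string $\mu$ over a hereditary algebra of type $A$ with three distinct top vertices $t_1,t_2,t_3$. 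Then $M(\mu)\oplus S_{t_1}\oplus S_{t_2}\oplus S_{t_3}$ has three irreducible maps out of $M(\mu)$.
\item \emph{Your description of radical maps is inaccurate.} Monomorphisms $M(\mu_i)\hookrightarrow M(\mu_j)$ are radical, although there $\rho=\mu_i$ is not proper. Also, a graph map is determined by the positions of $\rho$ in both strings, not by $\rho$ alone.
\item \emph{The (G4) step targets the wrong configuration.} ``One path trims first on the left, the other first on the right'' is not what actually occurs. The phrase ``their strings overlap in a way that produces a nonsplit extension'' does not say which overlap or which extension.
\end{itemize}

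\textbf{The missing ingredient is a one-sidedness lemma.} Let $f_\rho\colon M(\mu)\to M(\omega)$ be a graph map between indecomposable summands of $M$, allowing $\mu=\omega$. Write $\mu=\mu_L\rho\mu_R$ and $\omega=\omega_L\rho\omega_R$, inverting $\omega$ if necessary. Suppose neither $\mu_L=\omega_L=\emptyset$ nor $\mu_R=\omega_R=\emptyset$. Then
\[
0\to M(\mu)\to M(\omega_L\rho\mu_R)\oplus M(\mu_L\rho\omega_R)\to M(\omega)\to 0
\]
is a nonsplit exact sequence, contradicting $\Ext^1_A(M,M)=0$. When $\rho$ is trivial, you use the gentle condition at that vertex to choose the correct gluing. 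Hence every graph map between summands has its overlap at a common end of both strings.

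Once this lemma is available, the rest is short.
\begin{itemize}
\item \emph{(G1).} Let $f_{\rho_k}\colon M(\mu)\to M(\omega_k)$, $k=1,2$, share the same end of $\mu$ with $\rho_1\subsetneq\rho_2$. Then $\rho_1$ is a top substring of $\omega_2$, and $f_{\rho_1}=g\circ f_{\rho_2}$ with $g$ the graph map $M(\omega_2)\to M(\omega_1)$ given by $\rho_1$. Equal overlaps are handled by comparing $\omega_1$ and $\omega_2$ along their common end. So there is at most one arrow per end, and dually for incoming maps.
\item \emph{(G2) and (G3).} Two arrows compose to a nonzero map exactly when they share the same end of the middle string. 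Any other overlap would give a composite violating the lemma.
\item \emph{(G4).} In a nonzero path all arrows share that end. Induction on length then shows that two paths with the same nonzero composite coincide, and that a path vanishes if and only if some length-two subpath does. So there are no binomial relations.
\end{itemize}

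\textbf{Your surface fallback also fails as stated.} Rigid does not imply $\tau$-rigid. For $1\xrightarrow{a}2\xrightarrow{b}3$ with $ab=0$, the module $S_1\oplus P_2$ is rigid because $P_2$ is projective-injective, yet $\Hom(P_2,\tau S_1)\neq 0$. Lemma~\ref{l:bijection for tau rigid} matches non-crossing arcs only with $\tau$-rigid modules. So rigidity cannot be used to exclude interior crossings in that model.
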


\subsection{Support $\tau$-tilting modules and dissections} Let $(\SS,\MM,\PP)$ be a tiling.
By \cite[Definition 3.3]{QZ17} that for any two arcs $\gamma, \delta$, their \emph{intersection number} $\Int(\gamma,\delta)$ is defined as $$\Int(\gamma,\delta)=|\{(t_1,t_2)\mid0<t_1,t_2<1,\gamma(t_1)=\delta(t_2)\}|.$$ 
Given an arc $\gamma$ in $(\SS,\MM,\PP)$, the {\it intersection vector} $\Intv_\PP(\gamma)$ of $\gamma$ with respect to $\PP$ is the vector $(\Intv(\gamma,\af_i))_{\af_i\in\PP}$. The {\it intersection number} $|\Int_{\PP}(\gamma)|$ of $\gamma$ with respect to $\PP$ is given by $\sum_{\af_i\in\PP} \Int(\gamma,\af_i)$

\begin{definition}
  Let $\mathbf{R}$ be a set of  permissible arcs, $\mathbf{R}$ is called a {\it partial dissection} if $$\Int(\gamma_1,\gamma_2)=0$$  for any $\gamma_1,\gamma_2\in\mathbf{R}$. A partial dissection $\mathbf{R}$ is called a {\it dissection} of $\SS$ if it is maximal.
\end{definition}
We denote by $\PD(\SS)$ the set of partial  dissections of $\SS$,
the set of  dissections of $\SS$ is denoted by $\D(\SS)$. The following result describes of support $\tau$-tilting modules of  gentle algebras.
\begin{lemma}[{\cite[Theorems 2.29, 3.12]{HZZ} and \cite[Lemma 2.4]{FGLZ}}]\label{l:bijection for tau rigid}
Let $A = KQ/\langle I\rangle $ be a finite-dimensional gentle algebra. Then there exists a tiling  $(\SS,\MM,\PP)$  such that:
\begin{itemize}
    \item There is a complete set of primitive orthogonal idempotents $\{e_{\af_i} \mid \af_i \in \PP\}$ of $A$ indexed by $\PP$.
    \item There is a bijection
\[
			M: \{\gamma\in\PS\mid \Int(\gamma,\gamma)=0\}\longrightarrow\opname{ind~} \tau \text{-}\operatorname{rigid} A,\quad 
			\gamma\mapsto M(\gamma)
\]
    satisfying $\Intv_{\PP}(\gamma) = \dimv M(\gamma)$.
    \item The map $M$ induces a bijection
    \[
			 M: \PD(\SS) \longrightarrow \tau\text{-}\operatorname{rigid} A,\quad
	\RR \mapsto \bigoplus_{\gamma \in\mathbf{R}} M(\gamma).
\]
Moreover, $M$ restricts to a bijection between $\D(\SS)$ and $\opname{s\tau-tilt }  A$.
\end{itemize}
\end{lemma}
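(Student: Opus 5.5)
This statement is quoted from \cite{HZZ} and \cite{FGLZ}, so the plan is to assemble it from known results rather than prove it from scratch.

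\textbf{Tiling and idempotents.} By Theorem~\ref{t:tiling algebra}, $A\cong A_{\PP}$ for some tiling $(\SS,\MM,\PP)$. The vertices of $Q_{\PP}$ are the arcs of $\PP$, so the trivial paths $e_{\af_i}$ give the required complete set of primitive orthogonal idempotents.

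\textbf{The map on indecomposables.} By Theorem~\ref{t:permissible arcs and strings}, nontrivial permissible arcs correspond to strings, so each $\gamma\in\PS$ gives a string module $M(\gamma)$. Its dimension vector is computed from the string: each crossing of $\gamma$ with an arc $\af_i\in\PP$ is one vertex occurrence $\phi(j)=\af_i$ of the string. Hence $\Intv_{\PP}(\gamma)=\dimv M(\gamma)$.

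\textbf{The main obstacle: $\tau$-rigidity versus self-intersection.} I must show that $M(\gamma)$ is $\tau$-rigid if and only if $\Int(\gamma,\gamma)=0$. Band modules are never $\tau$-rigid over gentle algebras, since $\tau$ fixes the band tube in the one-parameter family. So every indecomposable $\tau$-rigid module is a string module and lies in the image. For string modules I would proceed as follows.
\begin{itemize}
\item Compute $\tau M(\gamma)$ by Butler--Ringel hooks and cohooks. Geometrically this corresponds to rotating both endpoints of $\gamma$ to the next marked point.
\item Use Proposition~\ref{p:dim pro two strings} to express a basis of $\Hom(M(\gamma),\tau M(\gamma))$ by pairs (top substring of $\gamma$, bottom substring of $\tau\gamma$).
\item Check that such a pair exists exactly when $\gamma$ crosses itself in the interior; an endpoint-only overlap does not produce such a pair.
\end{itemize}
This local check, tile by tile and using the permissibility conditions (P1) and (P2), is the delicate part, and it is carried out in \cite[Theorem 2.29]{HZZ}.

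\textbf{Partial dissections and $\tau$-rigid modules.} The same computation applied to two arcs $\gamma,\delta$ shows that
\[
\Hom(M(\gamma),\tau M(\delta))=0=\Hom(M(\delta),\tau M(\gamma))
\]
if and only if $\Int(\gamma,\delta)=0$. It follows that $\RR\mapsto\bigoplus_{\gamma\in\RR}M(\gamma)$ is a bijection $\PD(\SS)\to\tau\text{-rigid}\,A$.

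\textbf{Dissections and support $\tau$-tilting modules.} Basic support $\tau$-tilting modules are exactly the maximal basic $\tau$-rigid modules, by \cite{AIR}. The bijection above preserves inclusion in both directions, so maximal partial dissections, i.e.\ dissections, correspond to support $\tau$-tilting modules. This is \cite[Theorem 3.12]{HZZ} together with \cite[Lemma 2.4]{FGLZ}, where the orientation convention reversed from \cite{BS} is reconciled.
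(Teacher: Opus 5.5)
Your approach matches the paper's: the paper gives no proof of this lemma and simply imports it from \cite{HZZ} and \cite{FGLZ}. Your first three steps are a fair summary of what is being imported, namely realizing $A$ as $A_{\PP}$, checking $\Intv_{\PP}(\gamma)=\dimv M(\gamma)$, and deferring the dictionary between $\tau$-rigidity and interior intersections to \cite{HZZ}.

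The last step, however, rests on a false claim. Basic support $\tau$-tilting modules are \emph{not} the maximal basic $\tau$-rigid modules. By Bongartz completion in \cite{AIR}, every $\tau$-rigid module is a direct summand of a $\tau$-tilting module. Hence a $\tau$-rigid module is maximal, with respect to adding direct summands, exactly when it is $\tau$-tilting. Support $\tau$-tilting modules are instead the first components of maximal $\tau$-rigid \emph{pairs}. For example, $0$ is support $\tau$-tilting via the pair $(0,A)$, but it is not maximal $\tau$-rigid. Your bijection between $\PD(\SS)$ and basic $\tau$-rigid modules preserves inclusions, so what your argument actually proves is that $\D(\SS)$ corresponds to basic $\tau$-tilting modules.

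To reach support $\tau$-tilting modules you must also encode the projective component of the pair. This is what the generalized dissections of \cite{HZZ} do: they allow arcs of $\PP$ as well. An arc $\af\in\PP$ can be added to a partial dissection $\RR$ if and only if $\Int(\af,\gamma)=0$ for all $\gamma\in\RR$. Since $\dimv M(\RR)=\sum_{\gamma\in\RR}\Intv_{\PP}(\gamma)$, this holds if and only if the $\af$-entry of $\dimv M(\RR)$ vanishes. That in turn is equivalent to $\Hom_A(e_{\af}A,M(\RR))=0$, i.e.\ to $(M(\RR),e_{\af}A)$ being a $\tau$-rigid pair. Arcs of $\PP$ never cross each other, so maximal generalized partial dissections correspond to maximal $\tau$-rigid pairs, i.e.\ to support $\tau$-tilting pairs. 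Discarding the arcs of $\PP$ then gives the support $\tau$-tilting modules.

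Be aware of the definitions as written here: $\D(\SS)$ consists of maximal sets of \emph{permissible} arcs, and arcs of $\PP$ are not permissible. With these definitions the final clause can only hold in its $\tau$-tilting form. For instance, take $A=K$, realized by a disk with four marked points and $\PP=\{\af\}$ one diagonal, which gives two tiles of type (III). The unique dissection is $\{\delta\}$, with $\delta$ the other diagonal, whereas $A$ has two basic support $\tau$-tilting modules, $0$ and $K$. The $\tau$-tilting version is also all the paper needs later, since faithful $\tau$-tilting modules are tilting.
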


\subsection{Tilting modules and faithful dissections} \label{ss:tilting modules and faithful dissection}
Let $(\SS,\MM,\PP)$ be a tiling, $\mathbf{R}$ be a partial dissection of $(\SS,\MM,\PP)$

\begin{definition}
A partial dissection $\mathbf{R}$  of $(\SS,\MM,\PP)$ is called  {\it faithful }  if  for every indecomposable  $\PP$-angle, there exists an arc in $\RR$ that cuts the angle such that the cut-off angle is contractible ({\em cf.} Figure \ref{f:PAS}). 
\end{definition}

We denote the set of all  faithful partial dissections by $\FPD(\SS)$. The set of \emph{faithful dissections} is $\FD(\SS) := \D(\SS) \cap \FPD(\SS)$.  
 The following result was also obtained in \cite{Nie} (in Chinese). We provide here a more rigorous and self-contained proof for the reader's convenience.
\begin{lemma}\label{l:PFD is faithful}
    If $\mathbf{R}$ is a faithful partial dissection on $(\SS,\MM,\PP)$, \ie $\mathbf{R}\in\FPD(\SS)$, then $M(\mathbf{R})$ is a faithful $A_{\PP}$-module. 
\end{lemma}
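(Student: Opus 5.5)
We use the standard criterion that a module $M$ over a finite-dimensional algebra $A=KQ/\langle I\rangle$ is faithful if and only if its annihilator is zero. For a gentle (monomial) algebra this reduces to a combinatorial check: $\mathrm{ann}(M)$ is an ideal of $A$. We show that every nonzero element of $A$ acts nontrivially on $M$.

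First, reduce to paths. Take $0\neq x\in A$ and write $x=\sum_i \lambda_i w_i$, a combination of distinct nonzero paths. Multiply by the primitive idempotents $e_{\af}$, $\af\in\PP$, of Lemma~\ref{l:bijection for tau rigid} on both sides. This lets us assume all $w_i$ have a common source $a$ and a common target $b$.

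Choose a path $w$ occurring in $x$ that is maximal with respect to the ordering ``$w$ is a subpath of $w'$''. By Remark~\ref{r: bijection between fans and paths}, $w$ corresponds to a $\PP$-fan at a marked point $p$. It is then enough to produce a string module $M(\gamma)$ with $\gamma\in\RR$ having the following two properties:
\begin{itemize}
\item[(a)] some basis vector of $M(\gamma)_a$ is mapped by $w$ to a basis vector of $M(\gamma)_b$;
\item[(b)] the same basis vector is killed by every other path $w_i$.
\end{itemize}
If this holds, $x$ acts nontrivially on $M(\RR)$. Property (b) follows from the maximality of $w$ together with string combinatorics, because in a string module each basis vector is moved along a unique direct substring.

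Second, produce the arc, which is the geometric heart of the argument. Let the fan of $w$ be $\af_0,\af_1,\dots,\af_k$ at $p$, so $w$ goes from $\af_0$ to $\af_k$ (or in the reverse direction, depending on the orientation convention). If $k=0$, $w$ is a trivial path and we need only some $\gamma\in\RR$ with $\Int(\gamma,\af_0)\neq0$. This exists because faithfulness applies to any indecomposable $\PP$-angle having $\af_0$ as a side. (The degenerate case where $\af_0$ lies in no angle has to be handled separately, and there $A$ has a simple projective-injective vertex.)

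If $k\geq 1$, consider the indecomposable angle between $\af_0$ and $\af_1$. Faithfulness gives $\gamma\in\RR$ containing a (P2)-segment $\eta$ that cuts off this angle at $p$. The aim is to follow $\gamma$ near $p$: the permissibility conditions (P1) and (P2) force $\gamma$, once it crosses $\af_0$ and $\af_1$ near $p$, to continue crossing $\af_2,\dots,\af_k$ consecutively. The only alternative is that it ends at $p$, or that the fan is maximal there.

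If that fails for a given $\gamma$, we instead apply faithfulness to the last angle $(\af_{k-1},\af_k)$. We then argue that the arc turning around $p$ furthest realizes the whole fan. Equivalently, the string of $\gamma$ contains $w$ as a direct substring. This translates precisely into condition (a) for the basis vector at the start of that substring.

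The main obstacle is this second step. We must show that a single arc of $\RR$ crosses the entire fan $\af_0,\dots,\af_k$ consecutively around $p$, not just one indecomposable angle. I would first argue by contradiction. Suppose every arc of $\RR$ turning around $p$ stops before crossing the whole fan. Take the arc cutting $(\af_{j},\af_{j+1})$, where $j$ is chosen extremal. Using (P1) for end segments and the fact that arcs in $\RR$ do not cross, one shows that the arc cutting the next angle must intersect it in the interior. This contradicts $\RR$ being a partial dissection.

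Loops in $\PP$ (relations $\epsilon^2$, and the relation at loop arcs) need a separate short case check in the local pictures of Figure~\ref{f:tiling-algebra}.
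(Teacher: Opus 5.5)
Your plan follows the paper's proof. You reduce to a combination of distinct paths with common source and target. You then use faithfulness, together with the fact that arcs of $\RR$ do not cross, to find one $\gamma\in\RR$ whose string contains a given path, and conclude with a basis vector of $M(\gamma)$.

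The paper only asserts the geometric step. Your extremal argument works with the following choice: take the segment of an arc of $\RR$ that cuts off the angle between $\af_0$ and $\af_1$ and lies closest to $p$. The contractible triangles cut off at $p$ are nested. A segment entering a triangle cut off by an outer segment cannot cross it, cannot end at $p$, and cannot leave through the edge it entered (minimal position). So it is forced around the whole fan. (P1) plays no role here.

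The genuine gap is property (b). It does not follow from maximality of $w$, and in general no choice of $\gamma$ and $v$ achieves it. A basis vector of a string module can be moved by two different direct paths, one in each direction along the string. Also, maximality does not prevent another $w_i$ from being a proper subpath of $w$.

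Concretely, take the gentle algebra with an arrow $\alpha\colon a\to b$, a loop $\epsilon$ at $b$ and relation $\epsilon^2$, and let $x=\alpha+\alpha\epsilon$. Your maximal path is $w=\alpha\epsilon$. Every basis vector $v$ with $v\cdot\alpha\epsilon\neq 0$ satisfies $v\cdot\alpha\neq0$, so (b) fails for every $v$.

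What you need instead, and what the paper uses, is weaker and always true. If $v\cdot w_1$ is a basis vector, then for $i\neq 1$ the vector $v\cdot w_i$ is either $0$ or a basis vector different from $v\cdot w_1$. To see this, note that each arrow sends a basis vector to a single basis vector or to $0$, since a string never contains $\alpha^{-1}\alpha$. So a nonzero path of length $\ell$ moves the basis vector at position $j$ to position $j\pm\ell$. Two distinct paths acting nontrivially in the same direction are both read off the string, so they have different lengths. Hence the coefficient of $v\cdot w_1$ in $v\cdot x$ is $\lambda_1\neq 0$, for any $w_1$ occurring in $x$, and maximality is not needed at all.

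Your degenerate case cannot be ``handled separately''. If an arc $\af\in\PP$ is a side of no $\PP$-angle, faithfulness imposes nothing about $\af$. For the disk with four marked points and one diagonal ($A_\PP\cong K$), the empty set is vacuously a faithful partial dissection, and $M(\emptyset)=0$ is not faithful. So this case must be excluded from the statement. The paper's proof never treats trivial paths and overlooks this. Apart from that case, your treatment of $k=0$ correctly fills that omission.
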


\begin{proof}

	It suffices to show that for every nonzero element $a\in A_{\PP}$, we have $M(\mathbf{R})\cdot a\ne 0$.
	
	First let $a=p_1=\alpha_1\cdots\alpha_s$ be a nonzero path in $A_\PP$.
	Write $s(\alpha_i)=\af_i$ for $1\leq i\leq s$  and $t(\alpha_s)=\af_{s+1}$. Then $\af_1,\cdots,\af_{s+1}$ is a $\PP$-fan at some marked point $p$.  Since $\mathbf{R}\in\FPD(\SS)$,  for each $1\leq i\leq s$, there exists an arc $\gamma_i\in\mathbf{R}$ that cuts $\angle\alpha_i$ and cuts off a contractible angle ({\em cf.} Figure \ref{f:non intersecting arc segment}). Because $\af_1,\cdots,\af_{s+1}$ is a $\RR$-fan and the arcs in $\mathbf{R}$ have no self-intersections and no intersections with each other, there exists an arc $\gamma\in\mathbf{R}$ crossing the fan $\af_1,\ldots,\af_{s+1}$ and cutting off a contractible angle.  Hence $\alpha_1\cdots\alpha_s$ is a substring of $\gamma$, so
	$M(\gamma)\cdot p_1\neq 0$. Hence $M(\mathbf{R})\cdot a\neq0$.
	
	Now let $a=\lambda_1p_1+\cdots+\lambda_tp_t\in A_{\PP}$, where $0\neq \lambda_i\in K$ and $p_1,\cdots,p_t$ are distinct nonzero paths in $A_{\PP}$ from a vertex $\af_c$ to a vertex $\af_d$ in $Q_0$.
	Let $\mathcal{B}_c$ (resp. $\mathcal{B}_d$) denote the basis of the vector space $M(\mathbf{R})_c$ (resp. $M(\mathbf{R})_d$) at the vertex $\af_c$ (resp. $\af_d$). Suppose $p_1=\alpha_1\cdots\alpha_s$. By the string module construction, $\mathcal{B}_c$ is determined by the intersections of $\RR$ with the arc $\af_c$.  By the preceding argument, there is an arc $\gamma\in\mathbf{R}$ such that $\alpha_1\cdots\alpha_s$ is a substring of $M(\gamma)$. Thus there is an element $x_c\in \mathcal{B}_c$ such that $x_cp_1\in \mathcal{B}_d$. Since the paths $p_1,\cdots,p_t$ are distinct and nonzero from  $\af_c$ to $\af_d$, for each $i\neq 1$, either $x_cp_i=0$ or $x_cp_i\in \mathcal{B}_d$. In the latter case, the string module structure ensures that $x_cp_i\neq x_cp_1$ when $i\neq 1$. Consequently, $x_ca\neq 0$. Therefore, $a$ is not an annihilator of $M(\mathbf{R})$.
\end{proof}

     \begin{figure}[htpb]
			\begin{tikzpicture}[scale=1]
				\draw[thick,fill=black!20] (9,0.17)arc (250:290:3);
				\node at (10,0.2){$p$};
				\fill(10,0) circle(1.5pt);
                \draw[thick,color=blue] (10,0) .. controls (6.5,-0.5) and (6.5,-2) .. (6,-3);
                \draw[thick,color=blue] (10,0) .. controls (12,-0.5) and (13,-2) .. (13.5,-3);
				\draw[thick,color=blue] (10,0) .. controls (8.6,-0.9) and (8.4,-1.1) .. (8,-2.2);
				\draw[thick,color=blue] (8,-2.2) -- (7.8,-3);
                 \draw[thick,color=blue] (10,0) .. controls (9.5,-0.9) and (9.7,-1.1) .. (9.5,-2.2);
                 \draw[thick,color=blue] (9.5,-2.2) --  (9.3,-3);
				\draw[thick,color=blue] (10,0) .. controls (10.4,-0.9) and (10.6,-1.1) .. (11,-2.2);
				\draw[thick,color=blue] (11,-2.2) arc (20:-40:1);
				\draw[thick,color=red,dashed] (8,-0.9) .. controls (9,-1.4)  .. (9.5,-1.5);
				  \draw[thick, color=red] (9.5, -1.5).. controls (10.5, -1.3) .. (11, -1.1);
                \draw[thick,color=blue,dashed] (11,-2.2) arc (20:-40:1);
				\draw[thick,color=red,] (7.5,-1.6) .. controls (8.5,-1.9)  .. (9.5,-2);
				  \draw[thick, color=red,dashed] (9.5,-2).. controls (10.4, -2.3) .. (11.2, -2);
                \draw[thick,color=blue,->,bend right=30] (9.7,-0.7) to(10.3,-0.7) ;
                \draw[thick,color=blue,->,bend right=30] (9,-0.7)to (9.7,-0.7) ;
				\draw (10.1,-1)node{\tiny{$\alpha_{i}$}};
                \draw [red](10.1,-1.3)node[below]{\tiny{$\gamma_{i}$}};
                \draw (9.2,-1)node{\tiny{$\alpha_{i-1}$}};
                 \draw [red](8.8,-2)node[below]{\tiny{$\gamma_{i-1}$}};
                 \draw [blue] (6.3,-3.2) node {\tiny{$\af_{1}$}};
                 \draw [blue] (13.3,-3.2) node {\tiny{$\af_{s+1}$}};
                \draw [blue] (9.3,-3.2) node {\tiny{$\af_{i}$}};
               \draw [blue] (7.8,-3.2) node {\tiny{$\af_{i-1}$}}; 
               \draw [blue] (11.3,-3.2)node {\tiny{$\af_{i+1}$}};
                  \draw [blue] (7.3,-2) node {$\cdots$};
                     \draw [blue] (12,-2) node {$\cdots$};
			\end{tikzpicture}
			\caption{}\label{f:non intersecting arc segment}
		\end{figure}
        
Combining Lemma \ref{l:PFD is faithful} with Lemma \ref{l:bijection for tau rigid}, we obtain the following result; see also \cite{Nie} (in Chinese). 
\begin{theorem}\label{t: tilting modules and faithful dissection}
	Let $(\SS,\MM,\PP)$ be a tiling and let $A_\Pf$ be its associated gentle algebra. Then $M(\mathbf{R})$ is a tilting $A_{\PP}$-module if and only if  $\mathbf{R}$ is a faithful dissection of $(\SS,\MM,\PP)$.
\end{theorem}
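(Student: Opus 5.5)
We prove the two implications separately. Throughout, Lemma~\ref{l:bijection for tau rigid} identifies partial dissections with basic $\tau$-rigid modules and dissections with basic support $\tau$-tilting modules. We also use that $|A_\PP|=|\PP|$.

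For the implication "$\Leftarrow$", let $\RR\in\FD(\SS)$. By Lemma~\ref{l:PFD is faithful}, $M(\RR)$ is faithful. By Lemma~\ref{l:bijection for tau rigid}, $M(\RR)$ is support $\tau$-tilting; let $(M(\RR),P)$ be the corresponding $\tau$-tilting pair. Since $\Hom(P,M(\RR))=0$ and $M(\RR)$ is faithful, we must have $P=0$: a nonzero $P$ contains some $e_{\af}A$, and faithfulness gives $M(\RR)e_{\af}\neq 0$. Hence $|M(\RR)|=|A_\PP|$, so $M(\RR)$ is a faithful $\tau$-tilting module. Proposition~\ref{p:realtion between tau tilting module and tilting module}(b) then shows that $M(\RR)$ is tilting.

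For the implication "$\Rightarrow$", suppose $M(\RR)$ is tilting for a set $\RR$ of arcs. We first check that $\RR$ is a dissection.

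\begin{itemize}
\item \emph{$M(\RR)$ is $\tau$-rigid.} Since $\operatorname{pd} M(\RR)\le 1$, the Auslander--Reiten formula gives $\Hom(M(\RR),\tau M(\RR))\cong D\Ext^1(M(\RR),M(\RR))=0$.
\item \emph{$\RR$ is a dissection.} The count $|M(\RR)|=|A_\PP|$ makes $M(\RR)$ $\tau$-tilting, hence support $\tau$-tilting. By the bijection of Lemma~\ref{l:bijection for tau rigid}, $\RR\in\D(\SS)$.
\end{itemize}

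It remains to show that $\RR$ is faithful. A tilting module $T$ is faithful: $A$ embeds in $T^m$, because $A$ is cogenerated by $T$, as $A\in\operatorname{Fac}(T)^{\perp}$-type arguments, or equivalently the coresolution $0\to A\to T_0\to T_1\to 0$, show. So it suffices to prove the converse of Lemma~\ref{l:PFD is faithful}: if $M(\RR)$ is faithful, then $\RR$ is faithful.

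Fix an indecomposable $\PP$-angle. By Remark~\ref{r: bijection between fans and paths}, it corresponds to an arrow $\alpha\colon\af_i\to\af_j$. Since $M(\RR)$ is faithful, $M(\RR)\alpha\neq 0$, so some $M(\gamma)$ with $\gamma\in\RR$ has nonzero action of $\alpha$. By the string module construction, $\alpha^{\pm1}$ is then a letter of the string of $\gamma$. Under the bijection of Theorem~\ref{t:permissible arcs and strings}, a letter $\alpha$ in the string means that $\gamma$ crosses $\af_i$ and $\af_j$ consecutively through an irreducible arc segment of type (P2). That segment cuts out exactly the angle $\angle\alpha$ at $p_\alpha$, and the local triangle $\triangle(\eta)$ is contractible. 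Thus $\gamma$ cuts the angle as required.

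The main obstacle is this last translation, from a letter in the string to a geometric cut of the corresponding angle, which depends on the precise dictionary of \cite{BS}. I would verify it by examining the segment of $\gamma$ between its crossings with $\af_i$ and $\af_j$ inside the common tile, where only condition (P2) can apply.
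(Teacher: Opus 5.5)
Your proof is correct and follows the paper's route: the paper's one-line argument combines Lemma~\ref{l:PFD is faithful} with the bijection of Lemma~\ref{l:bijection for tau rigid} (and implicitly Proposition~\ref{p:realtion between tau tilting module and tilting module}(b)), exactly as in your ``$\Leftarrow$'' direction, where you also spell out why the projective part $P$ of the support $\tau$-tilting pair vanishes. Your ``$\Rightarrow$'' direction supplies what the paper never states: that tilting modules are faithful, and that faithfulness of $M(\RR)$ forces $\RR$ to be faithful, via letters of strings corresponding to (P2) segments. This second fact is the converse of Lemma~\ref{l:PFD is faithful}; the paper proves that lemma in only one direction but later cites it as an ``if and only if'' in Theorem~\ref{t:tilting flip}.
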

  
\section{Geometric models for endomorphism algebras of tilting modules}\label{s: end algebra}

\subsection{Morphisms and certain pairs of admissible segments}
We first recall the notion of admissible segments from \cite[Definition 3.18]{BS}.

Let $\gamma$ be a permissible arc. Write $\gamma$ as the concatenation of segments $\gamma = \gamma^- \widetilde{\gamma} \gamma^+$, and let $x$ (resp. $y$) be the connecting point of $\gamma^-$ (resp. $\gamma^+$) with $\widetilde{\gamma}$. Assume that $x$ lies in the interior of $\SS$ and on no arc of $\PP$. Let $\af_1, \af_2, \ldots,\af_k$ be the arcs in $\PP$ crossed by $\gamma$, in this order. Assume $|\Int_\PP (\widetilde{\gamma})| \neq 0$ and  let $\af_i, \ldots,\af_j$ be the arcs crossed by $\widetilde{\gamma}$.
\begin{enumerate}
\item The segment $\tilde{\gamma}$ is called an {\it anticlockwise admissible segment} if it satisfies the following two conditions:  
\begin{itemize}
\item Either $\af_i =\af_1$ or there is an arrow $\alpha\colon \af_{i} \rightarrow \af_{i-1}$ in $Q_1^{\PP}$;

\item Either $\af_j =\af_k$ or there is an arrow $\beta\colon \af_{j} \rightarrow \af_{j+1}$ in $Q_1^{\PP}$.
\end{itemize}
\item The notion of {\it clockwise admissible segment} is obtained from the above by reversing the orientations of the arrows $\alpha$ and $\beta$. 
\end{enumerate}
For a permissible arc $\gamma$, let $\aad(\gamma)$ and $\cad(\gamma)$ denote the sets of anticlockwise and clockwise admissible segments of $\gamma$, respectively. Figure~\ref{fig:admissiblesegment} illustrates this definition.

\begin{figure}[htpb]
   \centering
   \begin{minipage}[t]{0.48\linewidth}
    \centering
	\begin{tikzpicture}[scale=0.5]
	\draw[thick,fill=black!20] (1,0.17)arc (250:290:3);
    \node at (2,0.2){$p_1$};
    \fill(2,0) circle(1.5pt);
    \draw[thick,fill=black!20] (3,-4.17)arc (70:110:3);
    \node at (2,-4.2){$p_2$};
    \fill(2,-4) circle(1.5pt);
   \draw[blue,thick,->](2,-3)to  (1,-3.5);
   \draw[blue,thick,->] (7,-1) to (7.5,-.5);
    \draw[blue,thick](2,-4)--(2,0);
    \node [blue]at (2,-2.5)[right]{\tiny{$\af_i$}};
    \node [blue] at (7,-1.5)[left]{\tiny{$\af_j$}};
    \draw[blue,thick](2,-4)--(0,-3);
    \draw[blue,thick](2,0)--(0,-2);
    \fill(0,-3) circle(2pt);
  	\draw[thick,fill=black!20] (6,0.17)arc (250:290:3);
    \node at (7,0.2){$p_3$};
    \fill(7,0) circle(1.5pt);
    \fill(9,-2) circle(2pt);
    \draw[thick,fill=black!20] (8,-4.17)arc (70:110:3);
    \node at (7,-4.2){$p_4$};
    \fill(7,-4) circle(1.5pt);
    \draw[blue,thick](7,-4)--(7,0);
    \draw[blue,thick](7,-4)--(9,-3);
    \draw[blue,thick](7,0)--(9,-2);
    \fill(9,-3) circle(1.5pt);
    \draw[green,thick] (1,-2) .. controls (5,-1.5) and (6,-2.5) .. (8,-2);
    \draw[green,thick,dashed](1,-2)..controls(0.7,-2.5) and (0.7,-3)..(0.5,-3.8);
    \draw[green,thick,dashed](8,-2)..controls(8.2,-1.7) and (8.5,-1.2)..(9,-1);
    \draw[green,thick,dashed](1,-2)--(0,-3);
    \draw[green,thick,dashed](8,-2)--(9,-2);
    \node at(1,-2)[above]{\tiny{$x$}};
    \node at(8,-2)[above]{\tiny{$y$}};
	\end{tikzpicture}
    \end{minipage}
    \hfill
    \begin{minipage}[t]{0.48\linewidth}
    \centering
	\begin{tikzpicture}[scale=0.5]
	\draw[thick,fill=black!20] (1,0.17)arc (250:290:3);
    \node at (2,0.2){$p_1$};
    \fill(2,0) circle(1.5pt);
    \draw[thick,fill=black!20] (3,-4.17)arc (70:110:3);
    \node at (2,-4.2){$p_2$};
    \fill(2,-4) circle(1.5pt);
    \draw[blue,thick](2,-4)--(2,0);
    \node [blue]at (2,-3)[right]{\tiny{$\af_i$}};
    \node [blue] at (7,-1.5)[left]{\tiny{$\af_j$}};
    \draw[blue,thick](2,-4)--(0,-3);
    \draw[blue,thick](2,0)--(0,-2);
    \fill(0,-3) circle(2pt);
  	\draw[thick,fill=black!20] (6,0.17)arc (250:290:3);
    \node at (7,0.2){$p_3$};
    \fill(7,0) circle(1.5pt);
    \draw[thick,fill=black!20] (8,-4.17)arc (70:110:3);
    \node at (7,-4.2){$p_4$};
    \fill(7,-4) circle(1.5pt);
    \draw[blue,thick](7,-4)--(7,0);
    \fill(9,-2) circle(2pt);
     \draw[blue,thick](7,-4)--(9,-3);
     \draw[blue,thick](7,0)--(9,-2);
    \fill(9,-3) circle(1.5pt);
    \draw[red,thick] (1,-2) .. controls (5,-1.5) and (6,-2.5) .. (8,-2);
    \draw[red,thick,dashed](8,-2)..controls(8.2,-2.7) and (8.5,-3)..(9,-3.5);
    \draw[red,thick,dashed](1,-2)--(0,-3);
    \draw[red,thick,dashed](8,-2)--(9,-2);
     \node at(1,-2)[above]{\tiny{$x$}};
    \node at(8,-2)[above]{\tiny{$y$}};
     \draw[red,thick,dashed] (0,-.2) .. controls (0.5,-1.2) and (0.8,-1.5) .. (1,-2);
     \draw[blue,thick,->](1.5,-0.5)to (2,-1);
    \draw[blue,thick,->](8,-3.5) to (7,-3);
	\end{tikzpicture}
    \end{minipage}
\caption{{\small Left: $\widetilde{\gamma}$ is anticlockwise admissible. 
    Right: $\widetilde{\gamma}$ is clockwise admissible}
 }
    
\label{fig:admissiblesegment}
\end{figure}
\begin{rk}
 	Since our orientation convention for $(Q_{\Pf})_1$ is opposite to that of \cite{BS}, the two notions above are interchanged accordingly.
\end{rk}

Together with Proposition~\ref{p:dim pro two strings}, this gives the following geometric description of morphism spaces.
\begin{proposition}[{\cite[Proposition 3.19]{BS}}] \label{p:dim of hom}
The dimension of the $\Hom$-space $\Hom_{A_{\Pf}} (M(\gamma), M(\omega))$ equals the number of pairs $(\widetilde{\gamma}, ~\widetilde{\omega})$ of homotopic segments such that $$\Intv(\widetilde{\gamma}) = \Intv(\widetilde{\omega}),\quad \widetilde{\gamma}\in \aad(\gamma),\quad \widetilde{\omega}\in \cad(\omega).$$ Moreover, the set \[\{f_{\widetilde{\gamma}}\mid~(\widetilde{\gamma},\widetilde{\omega})\in\aad(\gamma)\times\cad(\omega)\text{~ with~} \Intv(\widetilde{\gamma})=\Intv(\widetilde{\omega})\text{~and~}\widetilde{\gamma}\sim\widetilde{\omega} \}\] is a basis of $\Hom_{A_{\Pf}}(M(\gamma),M(\omega)).$
\end{proposition}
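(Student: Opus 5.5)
The plan is to deduce the statement from Proposition~\ref{p:dim pro two strings} through the dictionary of Theorem~\ref{t:permissible arcs and strings}. Let $\mu$ and $\nu$ be the strings of $A_{\PP}$ associated with $\gamma$ and $\omega$. The goal is to build a bijection between
\[
\{(\widetilde{\gamma},\widetilde{\omega})\in\aad(\gamma)\times\cad(\omega)\mid \Intv(\widetilde{\gamma})=\Intv(\widetilde{\omega}),\ \widetilde{\gamma}\sim\widetilde{\omega}\}
\]
and $\{(\rho_1,\rho_2)\in\Tcal(\mu)\times\Bcal(\nu)\mid \rho_1=\rho_2^{\pm1}\}$. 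Under this bijection, $f_{\widetilde{\gamma}}$ should be the basis morphism $f_{\rho_1}$. Both the dimension formula and the basis statement then follow at once.

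\emph{Step 1: segments and substrings.} Orient $\gamma$ and let it cross $\af_1,\dots,\af_k\in\PP$ in this order. Recall how the string $\mu=\mu_1\cdots\mu_{k-1}$ is read off in \cite{BS}: the irreducible segment of $\gamma$ between $\af_{l}$ and $\af_{l+1}$ satisfies (P2), so it cuts out a $\PP$-angle at a marked point. By Remark~\ref{r: bijection between fans and paths}, this angle is a path of $A_{\PP}$. Depending on orientation, $\mu_l$ is this path or its inverse. A segment $\widetilde{\gamma}$ with $|\Int_{\PP}(\widetilde{\gamma})|\neq 0$ crossing exactly $\af_i,\dots,\af_j$ then corresponds to the substring $\mu_i\cdots\mu_{j-1}$; when $i=j$ this is the trivial string at $\af_i$.

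Next I check the boundary conditions. With the anticlockwise arrow convention of this paper, an arrow $\af_i\to\af_{i-1}$ means $\mu_{i-1}\in Q_1^{-1}$, and an arrow $\af_j\to\af_{j+1}$ means $\mu_j\in Q_1$. The alternatives $\af_i=\af_1$ and $\af_j=\af_k$ correspond to $i=1$ and $j=l$ in the definition of top substrings. Hence anticlockwise admissible segments of $\gamma$ correspond bijectively to $\Tcal(\mu)$. Reversing the arrows, clockwise admissible segments of $\omega$ correspond bijectively to $\Bcal(\nu)$. I must also check that the choice of orientation of $\gamma$ does not matter. This holds because $\Tcal(\mu^{-1})=\Tcal(\mu)^{-1}$, and the paper already notes that $M(\mu)\cong M(\mu^{-1})$.

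\emph{Step 2: matching pairs.} It remains to show that $\rho_1=\rho_2^{\pm1}$ holds exactly when $\widetilde{\gamma}\sim\widetilde{\omega}$ and $\Intv(\widetilde{\gamma})=\Intv(\widetilde{\omega})$.

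For one direction, suppose $\rho_1=\rho_2^{\pm1}$. Then the two segments cross the same arcs of $\PP$ in the same (or reversed) order, through the same $\PP$-angles. Each tile of type (I)--(V) is simply connected apart from the unmarked boundary in types (I) and (II), and permissibility pins down the side of that boundary. Therefore the successive irreducible pieces are homotopic tile by tile, and equality of intersection vectors is immediate.

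For the converse, suppose the segments are homotopic with equal intersection vectors. Since all arcs are in minimal position with respect to $\PP$, homotopic segments cross the arcs of $\PP$ in the same cyclic sequence. The angle cut out at each crossing, and hence each letter, is determined by the consecutive pair of crossed arcs together with the tile containing the piece between them. So the two substrings agree up to inversion.

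Finally I identify $f_{\widetilde{\gamma}}$ with the map $f_{\rho_1}$ of Proposition~\ref{p:dim pro two strings}: it factors through $M(\rho_1)$, projecting from the top substring of $\mu$ and including into the bottom substring of $\nu$.

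\emph{Main obstacle.} I expect the hard part to be the converse in Step~2: showing that homotopy together with equal intersection vector forces the same ordered sequence of crossings. A priori, $\Intv$ records only multiplicities, and an arc of $\PP$ may be crossed several times. My plan is to lift both segments to the universal cover, where the lifts of $\PP$ form a tree-like tiling. There, homotopic segments in minimal position cross the same lifted arcs. Permissibility ensures that the crossing sequence is read off uniquely from the lifted tiles, including for the loop arcs around the type (I) and (II) tiles. I would also treat separately the convention of what ``homotopic'' means for end segments ending at marked points.
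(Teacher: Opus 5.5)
Your proposal is correct and takes essentially the paper's route. The paper gives no separate argument: it quotes \cite[Proposition 3.19]{BS} as the translation of Crawley-Boevey's basis (Proposition~\ref{p:dim pro two strings}) through the arc--string dictionary of Theorem~\ref{t:permissible arcs and strings}, with anticlockwise (resp.\ clockwise) admissible segments matching top (resp.\ bottom) substrings, as in your Step~1, and your checks of the arrow conventions and orientation-independence are right. The universal-cover argument you sketch for Step~2 is a reasonable way to fill in the matching of homotopic segments, which the paper leaves implicit.
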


\subsection{$\mathbf{P}$-free negatively oriented angles}\label{ss:p-free negatively oriented angles}

Let $(\SS,\MM,\PP)$ be a tiling and let $\mathbf{R}$ be a partial dissection on $(\SS,\MM,\PP)$.
\begin{convention}\label{conv:arcs}
Throughout the rest of this paper, unless otherwise specified, all fans and angles are taken with respect to $\mathcal{A} = \PP \cup \mathbf{R}$ as defined in Definition \ref{def:fans}.
\end{convention}

Let $\gamma_1, \gamma_2\in\mathbf{R}$  share a common endpoint $p$, 
and let $\angle\alpha$ be an angle between them at $p$.

\begin{definition}[$\PP$-$\mathbf{free}$ oriented angle]
An oriented angle $\angle\alpha$ is called \emph{$\PP$-$\mathbf{free}$} if its fan contains no interior arcs from $\PP$, \ie no arc in $\PP$ splits  $\angle\alpha$  into two (non-empty) angles.
\end{definition}

    

We introduce the following notation:
\begin{itemize}
    \item $\angle^{-}_p(\gamma_1,\gamma_2)$: the set of all negatively oriented angles from $\gamma_1$ to $\gamma_2$ at $p$;
    \item $\angle^{-}(\gamma_1,\gamma_2)$: the union of all $\angle^{-}_p(\gamma_1,\gamma_2)$ over all common endpoints $p$ of $\gamma_1$ and $\gamma_2$;
    \item $\appfree(\gamma_1,\gamma_2)$: the subset of $\angle^{-}_p(\gamma_1,\gamma_2)$ consisting of $\mathbf{P}$-$\mathbf{free}$ negatively oriented angles from $\gamma_1$ to $\gamma_2$ at $p$; 
    \item $\apfree(\gamma_1,\gamma_2)$: the union of all $\appfree(\gamma_1,\gamma_2)$ over all common endpoints $p$ of $\gamma_1$ and $\gamma_2$.
\end{itemize}
If $\gamma_1$ and $\gamma_2$ have no common endpoint, we set $$\angle^{-}(\gamma_1,\gamma_2)=\apfree(\gamma_1,\gamma_2)=\emptyset.$$

\begin{ex}\label{e:example for tiling and faithful dissection}
   Let $(\SS,\MM,\PP)$ be a tiling and $\mathbf{R}$ a faithful dissection as shown in the Figure \ref{f:example for A_R}. In this example, $\angle\alpha$ and $\angle \beta$ are $\mathbf{P}$-$\mathbf{free}$, while $\angle \delta$ is not $\mathbf{P}$-$\mathbf{free}$. Moreover, $\angle\alpha\in \angle_{p_{\alpha},\mathbf{P}-\mathbf{free}}^-(\gamma_1,\gamma_2)$, $\angle\beta\in \angle_{p_{\beta},\mathbf{P-free}}^-(\gamma_3,\gamma_1)$.
   \begin{figure}[h]
       \centering
      \begin{tikzpicture}[scale=0.5]
          	\draw[ thick] (0,0) circle (3);
            \fill(0,-3) circle(2pt);
            \fill(0,3) circle(2pt);
            \fill(-2.2,-2) circle(2pt);
             \fill(-2.2,2) circle(2pt);
            \fill(2.2,-2) circle(2pt);
             \fill(2.2,2) circle(2pt);
            \draw[blue,thick](0,3)to(0,-3);
             \draw[blue,thick](0,3)to(-2.2,-2);
             \draw[blue,thick](0,3)to(2.2,-2);
             \node [blue]at (-0.9,0.5){\tiny{$\af_1$}};
             \node at (-2.2,2) [left] {\tiny{$p_\alpha$}};
             \node at (2.2,2) [right] {\tiny{$p_\beta$}};
             \node [blue]at (-0.2,0){\tiny{$\af_2$}};
             \draw[red,thick, bend left=30,->](-1,2) to(-1.9,1.3);
             \node[red] at(-1,1.5){\tiny{$\angle\alpha$}};
             \node[red] at(0.8,1.5){\tiny{$\angle\beta$}};
              \draw[red,thick, bend left=30,->] (1.9,1.3) to(1,2);
            \draw[red,thick, bend right=-30,->] (-0.6,-1.8)to (0.6,-1.8) ;
            \node[red] at(0,-1.8)[above]{\tiny{$\angle\delta$}};
             \node [blue]at (0.9,0.5){\tiny{$\af_3$}};
             \draw[red,thick](-2.2,2)to(2.2,2);
             \draw[red,thick](-2.2,2)to(0,-3);
             \draw[red,thick](0,-3)to(2.2,2);
             \node[red]at (-1,2.2){\tiny{$\gamma_1$}};
             \node[red]at (-2.2,1){\tiny{$\gamma_2$}};
             \node[red]at (2,0.6){\tiny{$\gamma_3$}};
      \end{tikzpicture}
       \caption{$\mathbf{P}$-$\mathbf{free}$ oriented angles}
       \label{f:p-free angle}
   \end{figure}
\end{ex}



\subsection{Radical morphisms and $\mathbf{P}$-free negatively oriented angles}
Let $(\SS,\MM,\PP)$ be a tiling and let $\mathbf{R}$ be a partial dissection on $(\SS,\MM,\PP)$. Let $\gamma_1, \gamma_2\in\mathbf{R}$  share a common endpoint $p$, and 
let $\angle\alpha$ be a $\PP$-$\mathbf{free}$ negatively oriented angle from $\gamma_1$ to $\gamma_2$ at $p$. One can choose orientations of $\gamma_1$ and $\gamma_2$ such that the first arc segments $\gamma_1^{(1)}$ and $\gamma_2^{(1)}$ are homotopic (If $\gamma_2 = \gamma_1$, we orient $\gamma_2$ such that $\gamma_2 = \gamma_1^{-1}$). Let $k$ be the minimal integer such that the $k$-th arc segments $\gamma_1^{(k)}$ and $\gamma_2^{(k)}$ are not homotopic. It is clear that $k\ge 2$ since  $\gamma_1^{(1)}$ and $\gamma_2^{(1)}$ are homotopic. Thus the concatenations $$\gamma_1^{(1)} \circ \cdots \circ \gamma_1^{(k-1)}\text{ and }\gamma_2^{(1)} \circ \cdots \circ \gamma_2^{(k-1)}$$ are homotopic.
Since the intersection is minimal and $\Int(\gamma_1,\gamma_2)=0$,
the possible configuration of $\gamma_1$ and $\gamma_2$ is shown in Figure \ref{f:the location for 1,2}. 

From this configuration, one obtains an anticlockwise admissible arc segment $\widetilde{\gamma_1}$ of $\gamma_1$ and a clockwise admissible arc segment $\widetilde{\gamma_2}$ of $\gamma_2$ such that $$\Intv_{\PP}(\widetilde{\gamma_1}) = \Intv_{\PP}(\widetilde{\gamma_2}).$$ These homotopic segments satisfy the conditions of Proposition \ref{p:dim of hom} and therefore induce a non-zero homomorphism from $M(\gamma_1)$ to $M(\gamma_2)$. Note that this morphism is uniquely  determined by $\angle\alpha$.
      \begin{figure}[h]
			\begin{tikzpicture}[xscale=0.8]
				\draw[thick,blue] (5,0) .. controls (5,1) and (5,2) .. (5,4);
				\draw[thick,red,->-=.58,>=stealth] (4,0) .. controls (4.5,1.6) and (5,1.85) .. (11,3);
                \draw[green,thick,->-=.5,>=stealth] (4,0)..controls (4.5,1.8) and (5,2)..(7,5.5);
				\draw[green] (7.5,5.5)node{$\gamma_1$};
				\draw[thick,blue] (5,4) .. controls (5.3,3.5) and (5.6,2) .. (6,1);
				\draw[thick,blue] (5,4) .. controls (6,3) and (7,2) .. (7.3,1.3);
				\draw[thick,blue] (5,4) .. controls (6,3.5) and (8,3) .. (9,1);
                \draw[blue,thick] (5,4)..controls (6.3,4) and (6.6,4.5).. (8,5);
				\draw[thick,blue] (9,1) .. controls (9,2) and (9,3) .. (9.5,4)\nn;
				\draw[thick,blue,dotted] (5.8,2.2) .. controls (5.9,2.27) and (6.1,2.33) .. (6.2,2.4);
				\draw (9,1)\nn (4,0)\nn (5,0)\nn (5,4)\nn;
				\node at (4,0)[below]{\tiny{$p$}};
                \node[green] at (6.5,4){\tiny{$\gamma_1^{(k)}$}};
                \node[red] at (8.8,3){\tiny{$\gamma_2^{(k)}$}};
				\node[blue] at (4.7,2.7){$\af_1$};
				\node[blue] at (7.6,3){$\af_{k-1}$};
				\node[red] at (11.3,3){$\gamma_2$};
                \draw[red,thick,->] (5,2.2) to (5.8,1.8);
                \node[red] at (5.5,1.8)[left] {\tiny $\angle\alpha$};
			\end{tikzpicture}
			\caption{The location of $\gamma_1,\gamma_2$}\label{f:the location for 1,2}
		\end{figure}
        
For a $\PP$-$\mathbf{free}$ negatively oriented angle $\angle\alpha$ from $\gamma_1$ to $\gamma_2$ at $p$, we denote by $f_{\angle\alpha}$ the  morphism from $M(\gamma_1)$ to $M(\gamma_2)$ induced by $\angle\alpha$. More precisely, there exists a unique pair of arc segments $(\widetilde{\gamma_{1}},~\widetilde{\gamma_{2}})$ and orientations of  $\widetilde{\gamma_{1}},~\widetilde{\gamma_{2}}$  such that 
\[(\widetilde{\gamma_{1}},~\widetilde{\gamma_{2}})\in\aad(\gamma_1)\times\cad(\gamma_2), ~\Intv(\widetilde{\gamma_{1}})=\Intv(\widetilde{\gamma_{2}}), ~\widetilde{\gamma_{1}}\sim\widetilde{\gamma_{2}}, ~\widetilde{\gamma_{1}}(0)=\widetilde{\gamma_{2}}(0)=p.\]
This pair induces $f_{\angle\alpha}$.
\begin{rk}
   \begin{enumerate}
   \item  Since $\angle\alpha$ is a fan of length at least one, we must have $(\widetilde{\gamma_{1}}, \widetilde{\gamma_{2}})\neq  (\gamma_1,\gamma_2)$.
       \item If $\gamma_2\neq \gamma_1$, it is clear that $f_{\angle\alpha}\in\mathbf{rad} (M(\gamma_1),M(\gamma_2))$, while in the case $\gamma_2=\gamma_1$, since $\angle\alpha$ is a fan of length at least one, then one can get $(\widetilde{\gamma_{1}}, \widetilde{\gamma_{2}})\neq  (\gamma_1,\gamma_1)$, so we also have 
 $f_{\angle\alpha}\in\mathbf{rad} (M(\gamma_1),M(\gamma_1))$. 
\item Let $\widetilde{\gamma_{11}},~\widetilde{\gamma_{21}}$ be the sub-segments of $\widetilde{\gamma_{1}},~\widetilde{\gamma_{2}}$ respectively such that $~\Intv(\widetilde{\gamma_{11}})=\Intv(\widetilde{\gamma_{21}})<~\Intv(\widetilde{\gamma_{1}}) $, $\widetilde{\gamma_{11}}\sim~\widetilde{\gamma_{21}}$,  $\widetilde{\gamma_{11}}(0)=\widetilde{\gamma_{21}}(0)=p$, then it is easy to get that  $$\widetilde{\gamma_{11}}\in \aad(\gamma_1) \text{ iff  }\widetilde{\gamma_{21}}\in \aad(\gamma_2),\ \   \widetilde{\gamma_{11}}\in \cad(\gamma_1) \text{ iff }\widetilde{\gamma_{21}}\in \cad(\gamma_2).$$ 
   \end{enumerate} 
\end{rk}

\begin{lemma}\label{l:mor determined by angle}
Let $\angle\alpha\in \angle^{-}(\gamma_1,\gamma_2)$  and $\angle \beta\in \angle^{-}(\gamma_1,\gamma_2)$ with $\angle\alpha\neq\angle \beta$. Then $f_{\angle\alpha}\neq f_{\angle \beta}$. 
\end{lemma}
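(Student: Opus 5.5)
The plan is to reduce the statement to the basis property in Proposition~\ref{p:dim of hom}. For each angle $\angle\alpha$ we have a uniquely determined pair $(\widetilde{\gamma_1},\widetilde{\gamma_2})\in\aad(\gamma_1)\times\cad(\gamma_2)$ with $\Intv(\widetilde{\gamma_1})=\Intv(\widetilde{\gamma_2})$, $\widetilde{\gamma_1}\sim\widetilde{\gamma_2}$ and common starting point $p$, and $f_{\angle\alpha}$ is the basis element attached to this pair. Distinct pairs give distinct elements of the basis. These elements are linearly independent, so in particular they are different as morphisms. It therefore suffices to show that the assignment $\angle\alpha\mapsto(\widetilde{\gamma_1},\widetilde{\gamma_2})$ is injective. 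Here I read $\angle^-(\gamma_1,\gamma_2)$ as the angles for which $f_{\angle}$ is defined, namely the $\PP$-$\mathbf{free}$ ones, as in the construction preceding the lemma.

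Suppose $\angle\alpha\neq\angle\beta$ have the same associated pair. I will split into two cases according to the vertices of the angles.

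\emph{Different vertices.} Say $\angle\alpha$ sits at $p$ and $\angle\beta$ sits at $q\neq p$. By construction, the segments for $\angle\alpha$ start at the marked point $p$, and those for $\angle\beta$ start at $q$. The pairs are therefore different, unless the segments are read with reversed orientation. Even then, the segment determines its endpoint in $\MM$, namely the point where $\gamma_1$ and $\gamma_2$ share their first homotopic arc segment. This endpoint is recovered as the marked point from which the pair agrees as oriented segments. Comparing these endpoints gives $p=q$, a contradiction.

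\emph{Same vertex.} Now $p=q$. At $p$, the arc $\gamma_1$ may arrive several times: it can be a loop, or $\gamma_1=\gamma_2$. So the real issue is which \emph{ends} of $\gamma_1$ and $\gamma_2$ at $p$ form the sides of the angle. Since both angles are $\PP$-$\mathbf{free}$, all the arcs of the fan strictly between the two sides come from $\RR$. Hence the first $\PP$-segments $\gamma_1^{(1)}$ and $\gamma_2^{(1)}$ cross the same first arc $\af_1\in\PP$, and the chosen ends of $\gamma_1$ and $\gamma_2$ are exactly the ones used to orient the arcs. Different angles at the same $p$ use a different end of $\gamma_1$ or of $\gamma_2$ (an angle is determined by its two sides in the complete fan at $p$). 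This gives a different orientation, hence a different starting half-segment of $\widetilde{\gamma_1}$ or $\widetilde{\gamma_2}$ inside the local tile at $p$. So the pairs differ as oriented segments at $p$, and the basis elements differ.

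The main obstacle is the degenerate situation $\gamma_1=\gamma_2$, or where $\gamma_1$ is a loop at $p$. There, reversing the orientation might identify the pair for $\angle\alpha$ with the pair for $\angle\beta$. To handle it, I will use the convention $\gamma_2=\gamma_1^{-1}$ together with part (1) of the preceding remark: the pair is never $(\gamma_1,\gamma_2)$ itself. Then $\widetilde{\gamma_1}$ is a proper segment of $\gamma_1$ that starts at one specific end. The two ends of a loop yield different proper segments of the string, namely a top substring at one end and at the other. Since Proposition~\ref{p:dim pro two strings} counts such substrings as different elements of $\Tcal(\mu)\times\Bcal(\omega)$, the two morphisms are different basis elements.
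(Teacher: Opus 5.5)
Your argument is the paper's: there too, $f_{\angle\alpha}=f_{\angle\beta}$ forces the inducing pairs of Proposition~\ref{p:dim of hom} to coincide, and the pair recovers the vertex and the angle, so your case analysis just expands the paper's one-line step ``hence $p_\alpha=p_\beta$ and $\angle\alpha=\angle\beta$''. One small slip in your last paragraph: Remark~(1) only excludes the pair $(\gamma_1,\gamma_2)$ itself, so it makes $\widetilde{\gamma_1}$ proper only when $\gamma_1=\gamma_2$, and that case is vacuous because a loop has only one negative angle to itself; when $\gamma_1$ is a loop and $\gamma_2\neq\gamma_1$ (or symmetrically), you may have $\widetilde{\gamma_1}=\gamma_1$, so there you should add that the two first components cannot both be all of $\gamma_1$, since otherwise the second components would be the same proper initial piece of $\gamma_2$ and $\gamma_1$ would be homotopic to $\gamma_1^{-1}$, which is impossible for a non-contractible loop.
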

\begin{proof}
Denote the endpoint of $\angle\alpha$ (resp. $\angle\beta$) by $p_\alpha$ (resp.  $p_\beta$).
   Suppose that  $f_{\angle\alpha}$ is induced by the pair $(\widetilde{\gamma_{1\alpha}},~\widetilde{\gamma_{2\alpha}})$, where $\widetilde{\gamma_{1\alpha}}$ is an arc segment of $\gamma_1$,  $\widetilde{\gamma_{2\alpha}}$ is an arc segment of $\gamma_2$, and there exist orientations of  $\widetilde{\gamma_{1\alpha}},~\widetilde{\gamma_{2\alpha}}$  such that 
\[(\widetilde{\gamma_{1\alpha}},~\widetilde{\gamma_{2\alpha}})\in\aad(\gamma_1)\times\cad(\gamma_2), ~\Intv(\widetilde{\gamma_{1\alpha}})=\Intv(\widetilde{\gamma_{2\alpha}}), ~\widetilde{\gamma_{1\alpha}}\sim\widetilde{\gamma_{2\alpha}}, ~\widetilde{\gamma_{1\alpha}}(0)=\widetilde{\gamma_{2\alpha}}(0)=p_{\alpha}.\]

Similarly, suppose $f_{\angle\beta}$ is induced by the  pair $(\widetilde{\gamma_{1\beta}},~\widetilde{\gamma_{2\beta}})$ with $\widetilde{\gamma_{1\beta}}(0)=\widetilde{\gamma_{2\beta}}(0)=p_{\beta}.$

If $f_{\angle\alpha}=f_{\angle\beta}$, then we must have
$(\widetilde{\gamma_{1\alpha}},~\widetilde{\gamma_{2\alpha}})=(\widetilde{\gamma_{1\beta}},~\widetilde{\gamma_{2\beta}})$.
Hence $p_{\alpha}=p_{\beta}$ and $\angle\alpha=\angle\beta$, a contradiction.
\end{proof}

\begin{thm}\label{t:basis of rad fromed by angles}
Let $\gamma_1,\gamma_2\in\mathbf{R}$. Then the set
$$\{f_{\angle\alpha}\mid\angle\alpha\in \apfree(\gamma_1,\gamma_2)\}$$
forms a basis of $\mathbf{rad}_{A_\PP}(M(\gamma_1),M(\gamma_2))$.
\end{thm}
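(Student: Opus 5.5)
The plan is to compare two sets, both indexed through Proposition~\ref{p:dim of hom}: the $\PP$-free negatively oriented angles from $\gamma_1$ to $\gamma_2$, and the pairs of homotopic admissible segments $(\widetilde{\gamma_1},\widetilde{\gamma_2})\in\aad(\gamma_1)\times\cad(\gamma_2)$ with $\Intv(\widetilde{\gamma_1})=\Intv(\widetilde{\gamma_2})$ that do not give a non-radical map. The map $\angle\alpha\mapsto f_{\angle\alpha}$ has already been constructed, and it lands in $\mathbf{rad}$ by the Remark following it. The family $\{f_{\angle\alpha}\}$ is a subset of the standard basis of Proposition~\ref{p:dim of hom}, since each $f_{\angle\alpha}$ is one of the $f_{\widetilde{\gamma}}$. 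Moreover, distinct angles give distinct basis elements by Lemma~\ref{l:mor determined by angle}. Hence the family is linearly independent, and it remains to show spanning.

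For spanning, the radical of $\Hom(M(\gamma_1),M(\gamma_2))$ is spanned by all standard basis elements $f_{\widetilde{\gamma}}$ except the identity when $\gamma_1=\gamma_2$. Here I use that $M(\gamma)$ is a string module with local endomorphism ring, and that every basis element other than the identity (full segment pair $(\gamma,\gamma)$) is nilpotent or non-iso. Between distinct $\tau$-rigid indecomposables, every map is radical.

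So the key step is the converse construction. Take a pair $(\widetilde{\gamma_1},\widetilde{\gamma_2})$ of homotopic admissible segments with $\widetilde{\gamma_1}\neq\gamma_1$ or $\widetilde{\gamma_2}\neq\gamma_2$; I will show that it is the pair attached to a unique $\PP$-free negative angle. The intended argument:
\begin{enumerate}
\item Since $\Int(\gamma_1,\gamma_2)=0$ and the segments are homotopic, the two arcs run parallel along $\widetilde{\gamma}$ and must diverge at both ends of the common segment unless an end of the segment is an endpoint of both arcs.
\item The admissibility conditions (arrow $\af_i\to\af_{i-1}$ for the anticlockwise segment, reversed for the clockwise one) say that at a divergence the arcs leave on opposite sides. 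Together with non-crossing, this is only possible if at least one end of the common segment is a common endpoint $p\in\MM$ of $\gamma_1$ and $\gamma_2$. If neither end were a marked point, the arcs would cross, as in the standard argument that a pair of diverging ends forces an intersection.
\item At that marked point $p$, the first arc segments of $\gamma_1$ and $\gamma_2$ are homotopic, so no arc of $\PP$ lies between them at $p$. Thus the angle from $\gamma_1$ to $\gamma_2$ at $p$ is $\PP$-free, and admissibility forces it to be negatively oriented. We are then exactly in the configuration of Figure~\ref{f:the location for 1,2}, and the induced pair is the given one.
\end{enumerate}

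I expect the case analysis in step~2 to be the main obstacle. One must rule out common segments whose two ends both lie in the interior of $\SS$, and handle the degenerate cases where $\gamma_1=\gamma_2$ or the arcs share both endpoints. The latter case can produce two angles, which matches two basis elements. I would treat it by checking the local pictures at each end of $\widetilde{\gamma}$ using the tile types (I)--(V) and the (P1)/(P2) permissibility conditions.
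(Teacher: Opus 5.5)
Your proposal is correct and is essentially the paper's proof. The paper also embeds the angle morphisms injectively into the basis of Proposition~\ref{p:dim of hom} via Lemma~\ref{l:mor determined by angle}. It then does a case analysis on whether $\widetilde{\gamma_1}$ and $\widetilde{\gamma_2}$ have empty head and/or tail, showing that $\Int(\gamma_1,\gamma_2)=0$ forces a common marked endpoint at one end of the shared segment, where a $\PP$-free negative angle sits. One point to tighten: the last line of your step~2 should conclude ``no common endpoint'', not ``neither end is a marked point''. The crossing argument must also exclude ends where only one arc meets $\MM$, for example $\gamma_2$ stopping at the start while $\gamma_1$ continues, with the roles reversed at the other end.
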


\begin{proof}
Denote by
 $$\mathcal{B}=\{f_{\widetilde{\gamma_1}}\mid~(\widetilde{\gamma_1},\widetilde{\gamma_2})\in\aad(\gamma_1)\times\cad(\gamma_2), \Intv(\widetilde{\gamma_1})=\Intv(\widetilde{\gamma_2}),\widetilde{\gamma_1}\sim\widetilde{\gamma_2}, (\widetilde{\gamma_1},\widetilde{\gamma_2})\neq {(\gamma_1,\gamma_2)}\}.$$
By Proposition \ref{p:dim of hom}, $\mathcal{B}$ is a basis of $\mathbf{rad}_{A_\PP}(M(\gamma_1),M(\gamma_2))$.
Denote by $$\mathcal{B}'=\{f_{\angle\alpha}\mid\angle\alpha\in \apfree(\gamma_1,\gamma_2)\}.$$ 
By Lemma~\ref{l:mor determined by angle}, we have $$\mathcal{B}'\subset\mathcal{B}.$$

Now let $f\in \mathcal{B}$. Then there exists a pair of homotopic segments $(\widetilde{\gamma}_1, \widetilde{\gamma}_2)$ such that 
 \[(\widetilde{\gamma_{1}},~\widetilde{\gamma_{2}})\in\aad(\gamma_1)\times\cad(\gamma_2), ~\Intv(\widetilde{\gamma_{1}})=\Intv(\widetilde{\gamma_{2}}), ~\widetilde{\gamma_{1}}\sim\widetilde{\gamma_{2}}, (\widetilde{\gamma_1},\widetilde{\gamma_2})\neq {(\gamma_1,\gamma_2)}.\]

Since $\widetilde{\gamma_{1}}\sim \widetilde{\gamma_{2}}$, let $\ba_i, \dots, \ba_j$ be the common sequence of arcs crossed by both $\widetilde{\gamma_{1}}$ and $\widetilde{\gamma_{2}}$. Write the complete ordered sequence of arcs in $\PP$ crossed by $\gamma_1$ as $$b_1, \dots, b_s, \ba_i, \dots, \ba_j, b_{s+1}, \dots, b_{s+k},$$ where $s \ge 0$ is the length of the head (initial part) and $k \ge 0$ is the length of the tail (terminal part) of $\gamma_1$. Similarly, write the complete ordered sequence of arcs in $\PP$ crossed by $\gamma_2$ as $$c_1, \dots, c_t, \ba_i, \dots, \ba_j, c_{t+1}, \dots, c_{t+l},$$ where $t \ge 0$ and $l \ge 0$ are the lengths of the head and tail of $\gamma_2$, respectively.

According to whether the head and tail are empty, we classify the configurations of $\widetilde{\gamma_{1}}$ into four types:
\begin{enumerate}
   \item $\widetilde{\gamma_{1}}^{(--)}$: both head and tail are empty (\ie $s = 0$ and $k = 0$);
     \item $\widetilde{\gamma_{1}}^{(-+)}$: the head is empty, and the tail is non-empty (\ie $s = 0$ and $k > 0$);
    \item $\widetilde{\gamma_{1}}^{(+-)}$: the head is non-empty, and the tail is empty (\ie $s > 0$ and $k = 0$);
   
  \item $\widetilde{\gamma_{1}}^{(++)}$: both head and tail are non-empty (\ie $s > 0$ and $k > 0$).
\end{enumerate}
Similarly,  we classify the configurations of $\widetilde{\gamma_{2}}$ into four types:
\begin{enumerate}
 \item $\widetilde{\gamma_{2}}^{(--)}$: both head and tail are empty (\ie $t = 0$ and $l = 0$);
 \item $\widetilde{\gamma_{2}}^{(-+)}$: the head is empty, and the tail is non-empty (\ie $t = 0$ and $l > 0$);
 \item $\widetilde{\gamma_{2}}^{(+-)}$: the head is non-empty, and the tail is empty (\ie $t > 0$ and $l = 0$);
   \item $\widetilde{\gamma_{2}}^{(++)}$: both head and tail are non-empty ( \ie $t > 0$ and $l > 0$). 
\end{enumerate}

Since $\widetilde{\gamma_{1}}$ is anticlockwise, 
$\widetilde{\gamma_{2}}$ is clockwise, 
all these possible configurations are depicted in Figure \ref{f:location for 1,2}.
 
   \begin{figure}[htpb]
   \centering
   \begin{minipage}[t]{0.48\linewidth}
    \centering
	\begin{tikzpicture}[scale=0.5]
	\draw[thick,fill=black!20] (1,0.17)arc (250:290:3);
    \node at (2,0.2){\tiny{$p_1$}};
    \fill(2,0) circle(1.5pt);
    \draw[thick,fill=black!20] (3,-4.17)arc (70:110:3);
    \node at (2,-4.2){\tiny{$p_2$}};
    \node [blue]at (0.5,-3.5)[below]{\tiny{$b_{s}$}};
    \fill(2,-4) circle(1.5pt);
    \draw[blue,thick](2,-4)--(2,0);
    \node [blue]at (2,-2)[left]{\tiny{$\af_i$}};
    \draw[blue,thick](2,-4)--(0,-3);
    \draw[blue,thick](2,0)--(0,-2);
    \node [blue]at (1,-1)[above]{\tiny{$c_t$}};
    \node [blue]at (8.5,-3.7){\tiny{$c_{t+1}$}};
    \fill(0,-3) circle(2pt);
  	\draw[thick,fill=black!20] (6,0.17)arc (250:290:3);
    \node at (7,0.2){\tiny{$p_3$}};
    \fill(7,0) circle(1.5pt);
    \fill(9,-2) circle(2pt);
    \draw[thick,fill=black!20] (8,-4.17)arc (70:110:3);
    \node at (7,-4.2){\tiny{$p_4$}};
    \fill(7,-4) circle(1.5pt);
    \draw[blue,thick](7,-4)--(7,0);
    \node [blue]at (7,-3)[right]{\tiny{$\af_j$}};
    \draw[blue,thick](7,-4)--(9,-3);
    \draw[blue,thick](7,0)--(9,-2);
    \fill(9,-3) circle(1.5pt);
    \draw[blue,thick,->](2,-3) to (1,-3.5);
     \draw[blue,thick,->] (7,-1) to (7.5,-.5);
    \node [green]at (10,-0.5){\tiny{$\widetilde{\gamma_1}^{(-+)}$}};
    \node [green]at (5,-3.5){\tiny{$\widetilde{\gamma_1}^{(+-)}$}};
    \node [green]at (10,-1){\tiny{$\widetilde{\gamma_1}^{(++)}$}};
     \node [green]at (6,-2){\tiny{$\widetilde{\gamma_1}^{(--)}$}};
    \node [blue] at (7.5,-0.5)[right]{\tiny{$b_{s+1}$}};
    \draw[green,thick] (0,-3) .. controls (5,-1.5) and (8,-1) .. (9,-0.5);
    \draw[green,thick] (0,-3.5) .. controls (6,-1.5) and (8,-1.5) .. (9,-1);
    \draw[green,thick] (0,-3.75) .. controls (7,-2.75) and (8,-2.25) .. (9,-2);
    \draw[green,thick] (0,-3)-- (9,-2);
    \node at (0,-3)[left]{\tiny{$q_1$}};
    \node at (9,-2)[right]{\tiny{$q_2$}};
	\end{tikzpicture}
    \end{minipage}
    \hfill
    \begin{minipage}[t]{0.48\linewidth}
    \centering
	\begin{tikzpicture}[scale=0.5]
	\draw[thick,fill=black!20] (1,0.17)arc (250:290:3);
    \node at (2,0.2){\tiny{$p_1$}};
    \fill(2,0) circle(1.5pt);
    \draw[thick,fill=black!20] (3,-4.17)arc (70:110:3);
    \node at (2,-4.2){\tiny{$p_2$}};
    \fill(2,-4) circle(1.5pt);
    \draw[blue,thick](2,-4)--(2,0);
    \node [blue]at (2,-2)[left]{\tiny{$\af_i$}};
    \draw[blue,thick](2,-4)--(0,-3);
    \node [blue]at (0.5,-3.5)[below]{\tiny{$b_{s}$}};
    \draw[blue,thick](2,0)--(0,-2);
    \node [blue]at (1,-1)[above]{\tiny{$c_t$}};
    \node [blue]at (8.5,-3.7){\tiny{$c_{t+1}$}};
    \fill(0,-3) circle(2pt);
  	\draw[thick,fill=black!20] (6,0.17)arc (250:290:3);
    \node at (7,0.2){\tiny{$p_3$}};
    \fill(7,0) circle(1.5pt);
    \draw[thick,fill=black!20] (8,-4.17)arc (70:110:3);
    \node at (7,-4.2){\tiny{$p_4$}};
    \fill(7,-4) circle(1.5pt);
    \draw[blue,thick](7,-4)--(7,0);
    \fill(9,-2) circle(2pt);
    \node [blue] at (7,-1.5)[right]{\tiny{$\af_j$}};
    \node [blue] at (8,-1)[right]{\tiny{$c_{s+1}$}};
     \draw[blue,thick](7,-4)--(9,-3);
     \draw[blue,thick](7,0)--(9,-2);
    \fill(9,-3) circle(1.5pt);
    \draw[red,thick] (0,-3) .. controls (5,-2.5) and (8,-3) .. (9,-3.75);
    \node [red]at (0,-1)[left]{\tiny{$\widetilde{\gamma_2}^{(+-)}$}};
    \node [red]at (0,-1.5)[left]{\tiny{$\widetilde{\gamma_2}^{(++)}$}};
    \node [red]at (5,-3.5)[left]{\tiny{$\widetilde{\gamma_2}^{(-+)}$}};
    \node [red]at (4,-2.3){\tiny{$\widetilde{\gamma_2}^{(--)}$}};
    
    \draw[red,thick] (0,-1.5) .. controls (6,-2) and (8,-2.5) .. (9,-3.5);
    \draw[red,thick] (0,-1) .. controls (6,-1.5) and (8,-2.25) .. (9,-2);
    \draw[red,thick] (0,-3)-- (9,-2);
    \node at (0,-3)[left]{\tiny{$q_1$}};
    \node at (9,-2)[right]{\tiny{$q_2$}};
    \draw[blue,thick,->] (1.5,-0.5) to(2,-1) ;
    \draw[blue,thick,->] (8,-3.5)to  (7,-3) ;
	\end{tikzpicture}
    \end{minipage}
	  \caption{The local configuration of  $\gamma_1$ (left) and $\gamma_2$ (right), such that $\widetilde{\gamma_1}$ is anticlockwise admissible and $\widetilde{\gamma_2}$ is clockwise admissible}
    \label{f:location for 1,2}
\end{figure}

In the following, for convenience,  we use  $\widetilde{\gamma_a}\in\widetilde{\gamma_a}^{(\xi_1,\xi_2)}$ to denote that $\widetilde{\gamma_a}$ is of type $\widetilde{\gamma_a}^{(\xi_1,\xi_2)}$ for each $1\leq a\leq 2$, where $\xi_1\in\{-,+\},\xi_2\in\{-,+\}$. 

Now, using the hypothesis $\Int(\gamma_1,\gamma_2)=0$ and $(\widetilde{\gamma_1},\widetilde{\gamma_2})\neq {(\gamma_1,\gamma_2)}$, we analyze the possible locations of $\widetilde{\gamma_1},~\widetilde{\gamma_2}$. 
\begin{enumerate}
\item Suppose $\widetilde{\gamma_2}\in\widetilde{\gamma_2}^{(--)}$. Since 
$(\widetilde{\gamma_1},\widetilde{\gamma_2})\neq {(\gamma_1,\gamma_2)}$,
we have $\widetilde{\gamma_1}\notin\widetilde{\gamma_1}^{(--)}$. Also, since $\Int(\gamma_1,\gamma_2)=0$,  the only geometrically possible cases avoiding intersection are $\widetilde{\gamma_1}^{(+-)}$ and $\widetilde{\gamma_1}^{(-+)}$ (see Figure \ref{f:when i=s,j=t}).

When $\widetilde{\gamma_1}\in \widetilde{\gamma_1}^{(-+)}$, the arc $\gamma_1$ shares a common endpoint $q_1$ with $\gamma_2$. By the assumption that $\Int(\gamma_1, \gamma_2) = 0$ (implying minimal intersection), the arc $\gamma_2$ must follow $\gamma_1$ in the clockwise direction at $q_1$. Similarly, when $\widetilde{\gamma}_1 \in \widetilde{\gamma_1}^{(+-)}$, the arc $\gamma_1$ shares a common endpoint $q_2$ with $\gamma_2$, and again $\gamma_2$ follows $\gamma_1$ in the clockwise direction at $q_2$. In both cases, we obtain $f=f_{\angle\alpha}$ for some $\angle\alpha\in\apfree(\gamma_1,\gamma_2)$. 

 \begin{figure}[htpb]
   \centering
    \centering
	\begin{tikzpicture}[scale=0.5]
	\draw[thick,fill=black!20] (1,0.17)arc (250:290:3);
    \node at (2,0.2){\tiny{$p_1$}};
    \fill(2,0) circle(1.5pt);
    \draw[thick,fill=black!20] (3,-4.17)arc (70:110:3);
    \node at (2,-4.2){\tiny{$p_2$}};
    \fill(2,-4) circle(1.5pt);
    \draw[blue,thick](2,-4)--(2,0);
    \node [blue]at (2,-2)[left]{\tiny{$\af_i$}};
     \draw[blue,thick](2,-4)--(0,-3);
     \draw[blue,thick](2,0)--(0,-2);
      \fill(0,-3) circle(2pt);
  	 \draw[thick,fill=black!20] (6,0.17)arc (250:290:3);
     \node at (7,0.2){\tiny{$p_3$}};
     \fill(7,0) circle(1.5pt);
     \fill(9,-2) circle(2pt);
     \draw[thick,fill=black!20] (8,-4.17)arc (70:110:3);
     \node at (7,-4.2){\tiny{$p_4$}};
     \fill(7,-4) circle(1.5pt);
     \draw[blue,thick](7,-4)--(7,0);
     \node [blue]at (7,-2)[right]{\tiny{$\af_j$}};
     \draw[blue,thick](7,-4)--(9,-3);
     \draw[blue,thick](7,0)--(9,-2);
     \fill(9,-3) circle(1.5pt);
    \draw[blue,thick,->](2,-3) to (1,-3.5);
     \draw[blue,thick,->] (7,-1) to (7.5,-.5);
    \draw[blue,thick,->] (1.5,-0.5) to(2,-1) ;
    \draw[blue,thick,->] (8,-3.5)to  (7,-3) ;
     \node [green]at (10,-0.5){\tiny{$\widetilde{\gamma_1}^{(-+)}$}};
    \node [green]at (5,-3.5){\tiny{$\widetilde{\gamma_1}^{(+-)}$}};
    \node [green]at (10,-1){\tiny{$\widetilde{\gamma_1}^{(++)}$}};
     \node [red]at (6.2,-2){\tiny{$\widetilde{\gamma_2}^{(--)}$}};
      \draw[green,thick] (0,-3) .. controls (5,-1.5) and (8,-1) .. (9,-0.5);
      \draw[green,thick] (0,-3.5) .. controls (6,-1.5) and (8,-1.5) .. (9,-1);
      \draw[green,thick] (0,-3.75) .. controls (7,-2.75) and (8,-2.25) .. (9,-2);
     \draw[red,thick] (0,-3)-- (9,-2);
      \node at (0,-3)[left]{\tiny{$q_1$}};
      \node at (9,-2)[right]{\tiny{$q_2$}};
	\end{tikzpicture}
	  \caption{Case for $\widetilde{\gamma_2}\in\widetilde{\gamma_2}^{(--)}$}
    \label{f:when i=s,j=t}
\end{figure}
\item Suppose $\widetilde{\gamma_2}\in\widetilde{\gamma_2}^{(-+)}$. If $\widetilde{\gamma_1}\in\widetilde{\gamma_1}^{(+-)}$ or $\widetilde{\gamma_1}\in\widetilde{\gamma_1}^{(++)}$, then $\Int(\gamma_1,\gamma_2)\neq 0$ (see Figure \ref{f:when i=s,j<t}), contradicting our assumption.  If $\widetilde{\gamma_1}\in\widetilde{\gamma_1}^{(-+)}$ or $\widetilde{\gamma_1}\in \widetilde{\gamma_1}^{(--)}$, then $\gamma_1$ shares a common endpoint $q_1$ with $\gamma_2$, and at $q_1$, the arc$\gamma_2$ follows $\gamma_1$ in clockwise direction. Hence $f=f_{\angle\alpha}$ for some $\angle\alpha\in\apfree(\gamma_1,\gamma_2)$. 
\begin{figure}[htpb]
   \centering
    \centering
	\begin{tikzpicture}[scale=0.5]
	\draw[thick,fill=black!20] (1,0.17)arc (250:290:3);
    \node at (2,0.2){\tiny{$p_1$}};
    \fill(2,0) circle(1.5pt);
    \draw[thick,fill=black!20] (3,-4.17)arc (70:110:3);
    \node at (2,-4.2){\tiny{$p_2$}};
    \fill(2,-4) circle(1.5pt);
    \draw[blue,thick](2,-4)--(2,0);
    \node [blue]at (2,-2)[left]{\tiny{$\af_i$}};
     \draw[blue,thick](2,-4)--(0,-3);
     \draw[blue,thick](2,0)--(0,-2);
      \fill(0,-3) circle(2pt);
  	 \draw[thick,fill=black!20] (6,0.17)arc (250:290:3);
     \node at (7,0.2){\tiny{$p_3$}};
     \fill(7,0) circle(1.5pt);
     \fill(9,-2) circle(2pt);
     \draw[thick,fill=black!20] (8,-4.17)arc (70:110:3);
     \node at (7,-4.2){\tiny{$p_4$}};
     \fill(7,-4) circle(1.5pt);
     \draw[blue,thick](7,-4)--(7,0);
     \node [blue]at (7,-2)[right]{\tiny{$\af_j$}};
     \draw[blue,thick](7,-4)--(9,-3);
     \draw[blue,thick](7,0)--(9,-2);
     \fill(9,-3) circle(1.5pt);
     \draw[blue,thick,->](2,-3) to (1,-3.5);
     \draw[blue,thick,->] (7,-1) to (7.5,-.5);
    \draw[blue,thick,->] (1.5,-0.5) to(2,-1) ;
    \draw[blue,thick,->] (8,-3.5)to  (7,-3) ;
    \node [green]at (10,-0.5){\tiny{$\widetilde{\gamma_1}^{(-+)}$}};
    \node [green]at (5,-3.5){\tiny{$\widetilde{\gamma_1}^{(+-)}$}};
    \node [green]at (10,-1){\tiny{$\widetilde{\gamma_1}^{(++)}$}};
     \node [green]at (6,-2){\tiny{$\widetilde{\gamma_1}^{(--)}$}};
      \draw[green,thick] (0,-3) .. controls (5,-1.5) and (8,-1) .. (9,-0.5);
      \draw[green,thick] (0,-3.5) .. controls (6,-1.5) and (8,-1.5) .. (9,-1);
      \draw[green,thick] (0,-3.75) .. controls (7,-2.75) and (8,-2.25) .. (9,-2);
      \draw[red,thick] (0,-3) .. controls (5,-2.5) and (8,-3) .. (9,-3.75);
      \node at (0,-3)[left]{\tiny{$q_1$}};
      \node at (9,-2)[right]{\tiny{$q_2$}};
      \draw[green,thick] (0,-3)-- (9,-2);
      \node [red]at (10,-3.5){\tiny{$\widetilde{\gamma_2}^{(-+)}$}};
	\end{tikzpicture}
	  \caption{Case for $\widetilde{\gamma_2}\in\widetilde{\gamma_2}^{(-+)}$}
    \label{f:when i=s,j<t}
\end{figure}

\item Suppose $\widetilde{\gamma_2}\in \widetilde{\gamma_2}^{(+-)}$.  If $\widetilde{\gamma_1}\in\widetilde{\gamma_1}^{(-+)}$ or $\widetilde{\gamma_1}\in\widetilde{\gamma_1}^{(++)}$, then clearly $\Int(\gamma_1,\gamma_2) \neq 0$ (see Figure \ref{f:when i>s,j=t}), contradicting our assumption. If $\widetilde{\gamma}_1 \in \widetilde{\gamma}_1^{(--)}$ or $\widetilde{\gamma}_1 \in \widetilde{\gamma}_1^{(+-)}$, then $\gamma_2$ shares a common endpoint $q_2$ with $\gamma_1$. Similarly, at $q_2$, the arc $\gamma_2$ follows $\gamma_1$ in the clockwise direction. Hence $f=f_{\angle\alpha}$ for some $\angle\alpha\in\apfree(\gamma_1,\gamma_2)$. 
\begin{figure}[htpb]
   \centering
    \centering
	\begin{tikzpicture}[scale=0.5]
	\draw[thick,fill=black!20] (1,0.17)arc (250:290:3);
    \node at (2,0.2){\tiny{$p_1$}};
    \fill(2,0) circle(1.5pt);
    \draw[thick,fill=black!20] (3,-4.17)arc (70:110:3);
    \node at (2,-4.2){\tiny{$p_2$}};
    \fill(2,-4) circle(1.5pt);
    \draw[blue,thick](2,-4)--(2,0);
    \node [blue]at (2,-2)[left]{\tiny{$\af_i$}};
     \draw[blue,thick](2,-4)--(0,-3);
     \draw[blue,thick](2,0)--(0,-2);
      \fill(0,-3) circle(2pt);
  	 \draw[thick,fill=black!20] (6,0.17)arc (250:290:3);
     \node at (7,0.2){\tiny{$p_3$}};
     \fill(7,0) circle(1.5pt);
     \fill(9,-2) circle(2pt);
     \draw[thick,fill=black!20] (8,-4.17)arc (70:110:3);
     \node at (7,-4.2){\tiny{$p_4$}};
     \fill(7,-4) circle(1.5pt);
     \draw[blue,thick](7,-4)--(7,0);
     \node [blue]at (7,-2)[right]{\tiny{$\af_j$}};
     \draw[blue,thick](7,-4)--(9,-3);
     \draw[blue,thick](7,0)--(9,-2);
     \fill(9,-3) circle(1.5pt);
   \draw[blue,thick,->](2,-3) to (1,-3.5);
     \draw[blue,thick,->] (7,-1) to (7.5,-.5);
    \draw[blue,thick,->] (1.5,-0.5) to(2,-1) ;
    \draw[blue,thick,->] (8,-3.5)to  (7,-3) ;
     \node [green]at (10,-0.5){\tiny{$\widetilde{\gamma_1}^{(-+)}$}};
    \node [green]at (5,-3.5){\tiny{$\widetilde{\gamma_1}^{(+-)}$}};
    \node [green]at (10,-1){\tiny{$\widetilde{\gamma_1}^{(++)}$}};
    \node [green]at (6,-2){\tiny{$\widetilde{\gamma_1}^{(--)}$}};
     \node [red]at (0,-1)[left]{\tiny{$\widetilde{\gamma_2}^{(+-)}$}};
      \draw[green,thick] (0,-3) .. controls (5,-1.5) and (8,-1) .. (9,-0.5);
      \draw[green,thick] (0,-3.5) .. controls (6,-1.5) and (8,-1.5) .. (9,-1);
      \draw[green,thick] (0,-3.75) .. controls (7,-2.75) and (8,-2.25) .. (9,-2);
     \draw[red,thick] (0,-1) .. controls (6,-1.5) and (8,-2.25) .. (9,-2);
      \node at (0,-3)[left]{\tiny{$q_1$}};
      \node at (9,-2)[right]{\tiny{$q_2$}};
      \draw[green,thick] (0,-3)-- (9,-2);
     
	\end{tikzpicture}
	  \caption{Case for $\widetilde{\gamma_2}\in\widetilde{\gamma_2}^{(+-)}$}
    \label{f:when i>s,j=t}
\end{figure}
 \item Suppose $\widetilde{\gamma_2}\in \widetilde{\gamma_2}^{(++)}$. In this case, we always have $\Int(\gamma_1,\gamma_2)\ne 0$ (see Figure \ref{f:when i>s,j<t}), which contradicts our assumption.

   \begin{figure}[htpb]
   \centering
    \centering
	\begin{tikzpicture}[scale=0.5]
	\draw[thick,fill=black!20] (1,0.17)arc (250:290:3);
    \node at (2,0.2){\tiny{$p_1$}};
    \fill(2,0) circle(1.5pt);
    \draw[thick,fill=black!20] (3,-4.17)arc (70:110:3);
    \node at (2,-4.2){\tiny{$p_2$}};
    \fill(2,-4) circle(1.5pt);
    \draw[blue,thick](2,-4)--(2,0);
     \node [blue]at (2,-2)[left]{\tiny{$\af_i$}};
     \draw[blue,thick](2,-4)--(0,-3);
      \draw[blue,thick](2,0)--(0,-2);
      \fill(0,-3) circle(2pt);
  	 \draw[thick,fill=black!20] (6,0.17)arc (250:290:3);
     \node at (7,0.2){\tiny{$p_3$}};
     \fill(7,0) circle(1.5pt);
     \fill(9,-2) circle(2pt);
     \draw[thick,fill=black!20] (8,-4.17)arc (70:110:3);
     \node at (7,-4.2){\tiny{$p_4$}};
     \fill(7,-4) circle(1.5pt);
     \draw[blue,thick](7,-4)--(7,0);
     \node [blue]at (7,-2)[right]{\tiny{$\af_j$}};
     \draw[blue,thick](7,-4)--(9,-3);
     \draw[blue,thick](7,0)--(9,-2);
     \fill(9,-3) circle(1.5pt);
      \node [red]at (0,-1.5)[left]{\tiny{$\widetilde{\gamma_2}^{(++)}$}};
       \draw[blue,thick,->](2,-3) to (1,-3.5);
     \draw[blue,thick,->] (7,-1) to (7.5,-.5);
    \draw[blue,thick,->] (1.5,-0.5) to(2,-1) ;
    \draw[blue,thick,->] (8,-3.5)to  (7,-3) ;
      \node [green]at (10,-0.5){\tiny{$\widetilde{\gamma_1}^{(-+)}$}};
     \node [green]at (5,-3.5){\tiny{$\widetilde{\gamma_1}^{(+-)}$}};
     \node [green]at (10,-1){\tiny{$\widetilde{\gamma_1}^{(++)}$}};
     \node [green]at (6,-2){\tiny{$\widetilde{\gamma_1}^{(--)}$}};
      \draw[green,thick] (0,-3) .. controls (5,-1.5) and (8,-1) .. (9,-0.5);
      \draw[green,thick] (0,-3.5) .. controls (6,-1.5) and (8,-1.5) .. (9,-1);
      \draw[green,thick] (0,-3.75) .. controls (7,-2.75) and (8,-2.25) .. (9,-2);
       \draw[red,thick] (0,-1.5) .. controls (6,-2) and (8,-2.5) .. (9,-3.5);
       \draw[green,thick] (0,-3)-- (9,-2);
      \node at (0,-3)[left]{\tiny{$q_1$}};
      \node at (9,-2)[right]{\tiny{$q_2$}};
	\end{tikzpicture}
	  \caption{Case for $\widetilde{\gamma_2}\in\widetilde{\gamma_2}^{(++)}$}
    \label{f:when i>s,j<t}
\end{figure}
\end{enumerate}
Therefore, in all possible cases, we have $f\in \mathcal{B}'$. Hence $\mathcal{B}=\mathcal{B}'$, and the theorem follows.
\end{proof}

\subsection{Morphism relations via 
$\PP$-free negatively oriented angles}

Let $\mathbf{R}$  be a partial dissection on $(\SS,\MM,\PP)$.

\begin{lemma}\label{l:f_alpha_3 can be factored}
Let $\gamma_a,\gamma_b,\gamma_c\in\mathbf{R}$ share a common endpoint $p$.
    Let $\angle\alpha_1\in\appfree(\gamma_a,\gamma_b)$ and let $\angle\alpha_2\in\appfree(\gamma_b,\gamma_c)$ be adjacent to $\angle\alpha_1$ such that  $\angle\alpha_2$ is on the clockwise side of $\angle\alpha_1$ (cf. Figure \ref{f:the location for k,j,i-}).
     Set $\angle \alpha_3=\angle\alpha_1+\angle\alpha_2$. Then $\angle\alpha_3\in \appfree(\gamma_a,\gamma_c)$ and $$f_{\angle\alpha_3}=f_{\angle\alpha_2}f_{\angle\alpha_1}.$$
\end{lemma}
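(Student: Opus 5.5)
The proof has two parts: first show that $\angle\alpha_3$ is a $\PP$-free negatively oriented angle from $\gamma_a$ to $\gamma_c$, then compare the morphisms through the string-combinatorial description of Proposition~\ref{p:dim of hom}.

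\emph{The angle.} By (C1)/(C2), the fan of $\angle\alpha_3=\angle\alpha_1+\angle\alpha_2$ is the concatenation of the two fans at $p$. Since $\angle\alpha_2$ lies on the clockwise side of $\angle\alpha_1$, going clockwise from $\gamma_a$ one meets $\gamma_b$ and then $\gamma_c$. Hence $\angle\alpha_3$ runs from $\gamma_a$ to $\gamma_c$ and is negatively oriented. Its interior arcs are the interior arcs of $\angle\alpha_1$, those of $\angle\alpha_2$, and $\gamma_b$ itself. The first two groups contain no arc of $\PP$ by hypothesis, and $\gamma_b\in\RR$ does not lie in $\PP$. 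So $\angle\alpha_3\in\appfree(\gamma_a,\gamma_c)$.

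\emph{Setting up the segments.} Orient $\gamma_a,\gamma_b,\gamma_c$ to start at $p$. Because the three arcs are pairwise non-crossing and the angles between them at $p$ contain no $\PP$-arcs, all three cross the same first $\PP$-arc, so their first arc segments are homotopic. Let $(\widetilde{\gamma_{a}},\widetilde{\gamma_{b}}')$ induce $f_{\angle\alpha_1}$ and $(\widetilde{\gamma_{b}}'',\widetilde{\gamma_{c}})$ induce $f_{\angle\alpha_2}$; both pairs start at $p$. I expect the following picture, as in Figure~\ref{f:the location for 1,2}. The arc $\gamma_b$ is sandwiched between $\gamma_a$ and $\gamma_c$ in the clockwise order near $p$. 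Non-crossing then forces the common initial string of $\gamma_a,\gamma_c$ to be a common initial string of all three arcs, and $\gamma_b$ separates from at least one of them no earlier than $\gamma_a$ and $\gamma_c$ separate from each other.

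\emph{Composition.} The composite $f_{\angle\alpha_2}f_{\angle\alpha_1}$ is computed with the standard composition rule for string-module graph maps, which follows from Proposition~\ref{p:dim pro two strings}. A graph map from a top substring $\rho_1$ of $\gamma_a$ to a bottom substring of $\gamma_b$, composed with a graph map from a top substring $\rho_2$ of $\gamma_b$ to a bottom substring of $\gamma_c$, equals the graph map on $\rho_1\cap\rho_2$ when that intersection is simultaneously a top substring of $\gamma_a$ and a bottom substring of $\gamma_c$; otherwise it is zero. Here both substrings are initial segments of $\gamma_b$ starting at $p$. Their intersection is therefore the shorter one, and it is an initial common segment of $\gamma_a$ and $\gamma_c$. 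By the sandwich observation, its length equals the length of the maximal common initial segment of $\gamma_a$ and $\gamma_c$, which by definition is $\widetilde{\gamma_a}$ for $\angle\alpha_3$. Admissibility at its far end is transferred using part (3) of the Remark following Figure~\ref{f:the location for 1,2}, which says a sub-segment is anticlockwise (resp.\ clockwise) admissible in one arc iff it is so in the other. This identifies the composite with the basis map $f_{\angle\alpha_3}$.

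\emph{Main obstacle.} The hard step is the sandwich claim: showing that the point where $\gamma_a$ and $\gamma_c$ diverge coincides with the earlier of the divergence points of $(\gamma_a,\gamma_b)$ and $(\gamma_b,\gamma_c)$, with the correct admissibility type at that point. I would handle it by the same four-case analysis used in the proof of Theorem~\ref{t:basis of rad fromed by angles}, applied to the next $\PP$-arc crossed after the common part. At that arc the three arcs appear in the order $\gamma_a,\gamma_b,\gamma_c$, since crossings are excluded, so the side on which $\gamma_b$ leaves is governed by the local gentle relations. Finally, the coefficient of the composite is $1$ because all graph maps have identity entries, so no scalar adjustment is needed.
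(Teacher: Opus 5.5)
Your proposal is correct and follows essentially the paper's route. The composite is the graph map on the shorter of the two initial segments of $\gamma_b$ at $p$; part (3) of the Remark transfers admissibility to $\gamma_a$ and $\gamma_c$; and the resulting pair is the one defining $f_{\angle\alpha_3}$. Your ``shorter one'' also covers the tie where both segments are all of $\gamma_b$, as for $P_1\twoheadrightarrow S_1\hookrightarrow P_3$ over $K(3\to 1\to 2)$ modulo the path of length two, a case the paper's assertion $x\neq y$ overlooks.

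The ``sandwich'' step you flag as the main obstacle is not needed. A homotopic pair starting at $p$ that is anticlockwise admissible in $\gamma_a$ and clockwise admissible in $\gamma_c$ is already the pair of $\angle\alpha_3$, by the uniqueness built into the definition of $f_{\angle\alpha_3}$; the paper concludes exactly this way.
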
     
  \begin{proof} It is clear $\angle\alpha_3\in \appfree(\gamma_a,\gamma_c)$.
 \begin{figure}[h]
        \centering
       \begin{tikzpicture}[scale=0.5]
				\draw[thick,blue] (5,3) .. controls (6,2.5) and (8,1) .. (9,-1)\nn;
                 \draw[blue] (6.6,2.6)node[left]{$\af_r$}; 
                \draw[thick,blue] (5,3) .. controls (6,3.5) and (7,4) .. (8,5.5)\nn;
                   \draw[blue] (6.6,4)node[left]{$\af_{r+1}$};
                \draw[thick,blue] (9,-1) .. controls (9,0.5) and (9.5,3) .. (10,4)\nn;
                \draw[blue] (9.3,3.4)node{$\af_h$};
                \draw[thick,blue] (10,4) .. controls (11,3.8) and (11.5,4.5) .. (13,5.5)\nn;
                 \draw[blue] (12,4.2)node{$\af_{h+1}$};
                 \draw[blue] (11.5,-0.5)node[above]{$\af_n$};
			    \draw[thick,blue] (9,-1) .. controls (8,-0.5) and (7,-0.3) .. (4,-0.7)\nn;
                \draw[blue] (6.6,-0.5)node[below]{$\af_1$};
                \draw[thick,blue] (9,-1) .. controls (10,-0.5) and (10.5,0.2) .. (13,-1.3)\nn;
                \draw[thick,blue] (9,-1) .. controls (8,0.3) and (7,0.5) .. (4,1)\nn;
				\draw  (5,3)\nn (8,-1.5)\nn;
                \draw [red,thick] (8,-1.5)..controls (9,0.5) and (10,0)..(12,-1.5);
                \draw [red,thick] (8,-1.5)..controls (9,1) and (10,2.5)..(12,5.8);
                \draw [red,thick] (8,-1.5)..controls (7,0) and (6,0.5)..(7,5.8);
                \draw[red] (12,-1.7)node[right]{$\gamma_c$};
                \draw[red] (12.2,6)node[right]{$\gamma_b$};
                \draw (9,-1)node[below]{$q$};
                \draw (8,-1.5) node [below]{$p$};
                \draw[red] (6.8,6)node[left]{$\gamma_a$}; 
                \draw[blue] (6.4,1.8)node[left]{$\vdots$}; 
                \draw[thick,blue,dotted] (8,2) .. controls (8.5,2) and (8.8,2) .. (9,2);
                \draw[thick,blue,dotted] (10,2) .. controls (11,1.5) and (11,1) .. (11,0);
                \draw[red,thick,->](6.8,.5)to [bend left=30](8.9,.5);
                \draw[red,thick,->](8.9,.5) to[bend left=30](10,-.3);
                \draw[red](8.5,1.2)node{\tiny${\angle\alpha_1}$};
                \draw[red](10,0.5)node{\tiny${\angle\alpha_2}$};
       \end{tikzpicture}
        \caption{The location for $\gamma_a,~\gamma_b,~\gamma_c$}\label{f:the location for k,j,i-}
    \end{figure}
Let $\widetilde{\gamma_{a,1}}$ be the arc segments of $\gamma_a$, let $\widetilde{\gamma_{b,1}}$ and $\widetilde{\gamma_{b,2}}$ be the arc segments of $\gamma_b$, and let $\widetilde{\gamma_{c,1}}$ be the arc segments of $\gamma_c$, chosen so that $f_{\angle\alpha_1}$ is induced by the pair $(\widetilde{\gamma_{a,1}},\widetilde{\gamma_{b,1}})$ and $f_{\angle\alpha_2}$ is induced by the pair $(\widetilde{\gamma_{b,2}},\widetilde{\gamma_{c,1}})$. Choose suitable orientations of these segments. Then we have 
 \[(\widetilde{\gamma_{a,1}},~\widetilde{\gamma_{b,1}})\in\aad(\gamma_a)\times\cad(\gamma_b), \Intv(\widetilde{\gamma_{a,1}})=\Intv(\widetilde{\gamma_{b,1}}), \widetilde{\gamma_{a,1}}\sim\widetilde{\gamma_{b,1}},~\widetilde{\gamma_{a,1}}(0)=\widetilde{\gamma_{b,1}}(0)=p,\] and
 \[(\widetilde{\gamma_{b,2}},~\widetilde{\gamma_{c,1}})\in\aad(\gamma_b)\times\cad(\gamma_c),\Intv(\widetilde{\gamma_{b,2}})=\Intv(\widetilde{\gamma_{c,1}}), \widetilde{\gamma_{b,2}}\sim\widetilde{\gamma_{c,1}},~~\widetilde{\gamma_{b,2}}(0)=\widetilde{\gamma_{c,1}}(0)=p.\]
  Let $$x=|\Intv(\widetilde{\gamma_{a,1}})|=|\Intv(\widetilde{\gamma_{b,1}})|, \quad y=|\Intv(\widetilde{\gamma_{b,2}})|=|\Intv(\widetilde{\gamma_{c,1}})|.$$ 
Note that $\widetilde{\gamma_{b,1}}(0)=~\widetilde{\gamma_{c,2}}(0)=p.$ Since $\widetilde{\gamma_{b,1}}\in\cad(\gamma_b)$ and $\widetilde{\gamma_{b,2}}\in\aad(\gamma_b)$, we must have $x\neq y$.

\noindent\textbf{Case 1}:  $x<y$. Then $\widetilde{\gamma_{b,1}}$ is a sub-arc segment of  $\widetilde{\gamma_{b,2}}$. Since $\widetilde{\gamma_{b,2}}\sim\widetilde{\gamma_{c,1}}$, one can find a sub-arc segment $\widetilde{\gamma_{c,2}}$ of $\widetilde{\gamma_{c,1}}$ such that $ \widetilde{\gamma_{c,2}}\sim \widetilde{\gamma_{b,1}}$. Hence $$\widetilde{\gamma_{a,1}}\sim \widetilde{\gamma_{b,1}}\sim \widetilde{\gamma_{c,2}}.$$
Since $|\widetilde{\gamma_{c,2}}|=x<y=|\widetilde{\gamma_{c,1}}|$, together with $\widetilde{\gamma_{b,2}}\sim\widetilde{\gamma_{c,1}}$ and
$\widetilde{\gamma_{b,1}}\in\cad(\gamma_b)$, we obtain
 $\widetilde{\gamma_{c,2}}\in \cad(\gamma_c)$. Thus we get a pair $(\widetilde{\gamma_{a,1}},~\widetilde{\gamma_{c,2}})$ such that 
\[(\widetilde{\gamma_{a,1}},~\widetilde{\gamma_{c,2}})\in\aad(\gamma_a)\times\cad(\gamma_c), \Intv(\widetilde{\gamma_{a,1}})=\Intv(\widetilde{\gamma_{c,2}}), \widetilde{\gamma_{a,1}}\sim\widetilde{\gamma_{c,2}},\widetilde{\gamma_{a,1}}(0)=\widetilde{\gamma_{c,2}}(0)=p.\]
This pair induces  $f_{\angle\alpha_3}$. By construction, we have $$f_{\angle\alpha_3}=f_{\angle\alpha_2}f_{\angle\alpha_1}.$$

\noindent\textbf{Case 2}:  $x>y$. Then $\widetilde{\gamma_{b,2}}$ is a sub-arc segment of  $\widetilde{\gamma_{b,1}}$. Since $\widetilde{\gamma_{b,1}}\sim\widetilde{\gamma_{a,1}}$, one can find a sub-arc segment $\widetilde{\gamma_{a,2}}$ of $\widetilde{\gamma_{a,1}}$ such that $ \widetilde{\gamma_{a,2}}\sim \widetilde{\gamma_{b,2}}$. Hence $$\widetilde{\gamma_{a,2}}\sim \widetilde{\gamma_{b,2}}\sim \widetilde{\gamma_{c,2}}.$$
Since $|\widetilde{\gamma_{a,1}}|=x>y=|\widetilde{\gamma_{a,2}}|$, together with $\widetilde{\gamma_{b,1}}\sim\widetilde{\gamma_{a,1}}$ and 
 $\widetilde{\gamma_{b,2}}\in\aad(\gamma_b)$, we obtain  
 $\widetilde{\gamma_{a,2}}\in \aad(\gamma_a)$. Thus we get a pair $(\widetilde{\gamma_{a,2}},~\widetilde{\gamma_{c,1}})$ such that 
\[(\widetilde{\gamma_{a,2}},~\widetilde{\gamma_{c,1}})\in\aad(\gamma_a)\times\cad(\gamma_c), \Intv(\widetilde{\gamma_{a,2}})=\Intv(\widetilde{\gamma_{c,1}}), \widetilde{\gamma_{a,2}}\sim\widetilde{\gamma_{c,1}},\widetilde{\gamma_{a,2}}(0)=\widetilde{\gamma_{c,1}}(0)=p.\]
This pair induces $f_{\angle\alpha_3}$. By construction, we have $$f_{\angle\alpha_3}=f_{\angle\alpha_2}f_{\angle\alpha_1}.$$ This completes the proof.
 \end{proof}   

\begin{lemma}\label{l:relation for gamma_a is a loop}
    Let $\gamma_a\in\mathbf{R}$ be a loop and let $\angle\alpha\in\appfree(\gamma_a,\gamma_a)$.
    Then $$f^2_{\angle\alpha}=f_{\angle\alpha} f_{\angle\alpha}=0.$$
\end{lemma}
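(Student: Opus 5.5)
We plan to compute $f_{\angle\alpha}f_{\angle\alpha}$ directly from the description of $f_{\angle\alpha}$ via the pair of admissible segments in Proposition~\ref{p:dim of hom}, and to show that the composite sends every basis vector of $M(\gamma_a)$ to zero.

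First we fix the local picture. Let $p$ be the vertex of $\angle\alpha$. Since $\gamma_a$ is a loop, both ends of $\gamma_a$ sit at $p$, and $\angle\alpha$ runs negatively from one end of $\gamma_a$ to the other. As in the construction preceding Lemma~\ref{l:mor determined by angle}, we orient the second copy as $\gamma_a^{-1}$. Then $f_{\angle\alpha}$ is induced by a pair $(\widetilde{\gamma_{1}},\widetilde{\gamma_{2}})\in\aad(\gamma_a)\times\cad(\gamma_a)$ with both segments starting at $p$ and $\widetilde{\gamma_1}\sim\widetilde{\gamma_2}$. Thus $\widetilde{\gamma_1}$ is an initial segment of $\gamma_a$ read from one end, say the end $\gamma_a(0)$, and $\widetilde{\gamma_2}$ is an initial segment of $\gamma_a^{-1}$, i.e.\ a terminal segment of $\gamma_a$ read from $\gamma_a(1)$. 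In string language, $f_{\angle\alpha}$ is the standard map that factors through the factor string $\rho$ of $M(\gamma_a)$. Here $\rho$ is a top substring at the start of the string of $\gamma_a$, and it is identified with a bottom substring at the end of the string of $\gamma_a$.

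The key step is to show that the image of $f_{\angle\alpha}$ does not meet the top substring $\rho$ used to define $f_{\angle\alpha}$ again. The image of $f_{\angle\alpha}$ is the submodule spanned by the basis vectors along the terminal segment $\widetilde{\gamma_2}$. The composite $f_{\angle\alpha}^2$ is nonzero only if some of these basis vectors survive the projection onto the top substring $\widetilde{\gamma_1}$ located at the initial end. By Proposition~\ref{p:dim pro two strings}, the composite of two such standard maps is either zero or the standard map attached to the overlap of $\widetilde{\gamma_2}$ and $\widetilde{\gamma_1}$ inside the string of $\gamma_a$. So we need to show this overlap gives nothing.

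We will use $\Int(\gamma_a,\gamma_a)=0$ and the loop configuration at $p$. The initial segment $\widetilde{\gamma_1}$ and the terminal segment $\widetilde{\gamma_2}$ are homotopic, so they cross the same $\PP$-arcs in the same order when both are read from $p$. If they overlapped in $\gamma_a$, or if the terminal part could be re-read as part of the top substring, then $\gamma_a$ would start and end with the same sequence of crossings. Arguing as in the cases of Theorem~\ref{t:basis of rad fromed by angles}, this would force a self-intersection of $\gamma_a$ in the interior, or else it would make $\gamma_a$ wind so that its two ends at $p$ appear in the opposite order. Either outcome contradicts the hypothesis that $\angle\alpha$ is a negative angle from $\gamma_a$ to itself.

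Alternatively, and more cleanly, we can invoke Lemma~\ref{l:endomorphism algebra is gentle}: $\End M(\gamma_a)$ is a local gentle algebra whose radical is spanned by $f_{\angle\alpha}$, by Theorem~\ref{t:basis of rad fromed by angles}. A loop arc has exactly one negative angle to itself at $p$, namely the fan between its two ends. Hence the radical is one-dimensional, $\End M(\gamma_a)=K[f]/(f^n)$ has dimension $2$, and therefore $f^2=0$.

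The main obstacle is justifying that $\apfree(\gamma_a,\gamma_a)$ has exactly one element. That is, we must check that no other common-endpoint angle arises from the same end in a different cyclic order; this will be settled from the local picture at $p$.
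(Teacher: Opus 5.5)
Your second argument is correct, and it takes a genuinely different route from the paper's.

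\textbf{The paper's route.} The paper argues by contradiction at the level of graph maps. If $f_{\angle\alpha}f_{\angle\alpha}\neq 0$, the composite would be induced by homotopic segments $\widetilde{\gamma_{a,3}}\sim\widetilde{\gamma_{a,4}}\sim\widetilde{\gamma_{a,5}}$. Here $\widetilde{\gamma_{a,4}}$ lies in both the initial segment $\widetilde{\gamma_{a,1}}$ and the terminal segment $\widetilde{\gamma_{a,2}}$ of the loop, so it has no endpoint in $\MM$. But by Theorem~\ref{t:basis of rad fromed by angles}, every nonzero radical graph map comes from a $\PP$-free angle, i.e.\ from segments starting at a marked point.

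\textbf{Your route.} You use only the dimension count in that theorem. The loop $\gamma_a$ has the single endpoint $p$, where it occupies exactly two positions of the complete fan. Hence $\apfree(\gamma_a,\gamma_a)=\{\angle\alpha\}$, and the Jacobson radical $J$ of the local algebra $\End M(\gamma_a)$ equals $Kf_{\angle\alpha}$. Since $J$ is nilpotent, $J^2\subsetneq J$, so $J^2=0$.

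Two small remarks on your version:
\begin{itemize}
\item The step you call the main obstacle is immediate. The fan at a boundary marked point is linearly ordered, so there is no cyclic ambiguity. The paper uses the same count in the proof of Lemma~\ref{l:conditions for if and only if p_alphap_beta neq 0}.
\item Lemma~\ref{l:endomorphism algebra is gentle} is superfluous. Only the nilpotency of $J$ is used.
\end{itemize}

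\textbf{Comparison.} Your route is shorter and avoids analysing composites of graph maps. The paper's route buys a mechanism: a nonzero composite would factor through a segment with no marked endpoint. The paper reuses this in case (2) of Lemma~\ref{l:p_alpha ne p_beta}, where the three arcs may differ and no dimension count is available.

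\textbf{Your first sketch.} As written it is not a proof. The claim that an overlap of the two segments would force a self-intersection, or reverse the order of the ends at $p$, is not justified. It is also not needed. If you want a direct argument, make it string-theoretic, as follows.
\begin{itemize}
\item Let $\mu=\mu_1\cdots\mu_l$ be the string of $\gamma_a$, replacing it by $\mu^{-1}$ if necessary. Then $f_{\angle\alpha}$ identifies a top prefix $\mu_1\cdots\mu_j$ with the inverse of the bottom suffix $\mu_{l-j+1}\cdots\mu_l$, because the two segments leave $p$ from the two ends of the loop. So $\mu_i=\mu_{l+1-i}^{-1}$ for $1\le i\le j$, with $j<l$ since the pair is not $(\gamma_a,\gamma_a)$.
\item If $2j>l$, some letter equals its own inverse, or two consecutive letters are mutually inverse. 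Neither is allowed in a string.
\item If $2j=l$, the top condition forces $\mu_{j+1}$ to be a direct letter, and the bottom condition forces $\mu_j$ to be direct. This contradicts $\mu_j=\mu_{j+1}^{-1}$.
\item Hence $2j<l$. The prefix and the suffix involve disjoint sets of basis vectors, so $f_{\angle\alpha}^2=0$ at once, without Theorem~\ref{t:basis of rad fromed by angles}.
\end{itemize}
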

\begin{proof}
     Let $\widetilde{\gamma_{a,1}}$ and $ \widetilde{\gamma_{a,2}}$ be the arc segments of $\gamma_a$ such that $f_{\angle\alpha}$ is induced by the pair $(\widetilde{\gamma_{a,1}}, \widetilde{\gamma_{a,2}})$. Thus there are orientations of $\widetilde{\gamma_{a,1}}$ and $\widetilde{\gamma_{a,2}}$ satisfying
 \[(\widetilde{\gamma_{a,1}},~\widetilde{\gamma_{a,2}})\in\aad(\gamma_a)\times\cad(\gamma_a), \Intv(\widetilde{\gamma_{a,1}})=\Intv(\widetilde{\gamma_{a,2}}), \widetilde{\gamma_{a,1}}\sim\widetilde{\gamma_{a,2}},~\widetilde{\gamma_{a,1}}(0)=~\widetilde{\gamma_{a,2}}(0)=p.\]
 Suppose $f_{\angle\alpha} f_{\angle\alpha}\neq 0$. Then there exist three arc segments 
 $\widetilde{\gamma_{a,3}},\widetilde{\gamma_{a,4}}, \widetilde{\gamma_{a,5}}$ such that 
\begin{itemize}
    \item  $\widetilde{\gamma_{a,3}}\in\aad(\gamma_{a})$ is a sub-arc segment of $\widetilde{\gamma_{a,1}}$;
    \item  $\widetilde{\gamma_{a,4}}$ is a sub-arc segment of $\widetilde{\gamma_{a,1}}$ and a sub-arc segment of $\widetilde{\gamma_{a,2}}$;
    \item $\widetilde{\gamma_{a,5}}\in\cad(\gamma_{a})$ is a sub-arc segment of $\widetilde{\gamma_{a,2}}$;
\end{itemize}
 satisfying $$ \Intv(\widetilde{\gamma_{a,3}})=\Intv(\widetilde{\gamma_{a,4}})=\Intv(\widetilde{\gamma_{a,5}}), \quad \widetilde{\gamma_{a,3}}\sim\widetilde{\gamma_{a,4}}\sim\widetilde{\gamma_{a,5}}.$$
and $f_{\angle\alpha} f_{\angle\alpha}$ is induced by $(\widetilde{\gamma_{a,3}},~\widetilde{\gamma_{a,5}}).$ Note that by Theorem~\ref{t:basis of rad fromed by angles}, we must have either $\widetilde{\gamma_{a,3}}(0)\in\MM$ or   $\widetilde{\gamma_{a,3}}(1)\in\MM$.
However,
 because $\gamma_a$ is a loop, the first arc segment of $\widetilde{\gamma_{a,1}}$ (divided by $\PP$) and the first arc segment of $\widetilde{\gamma_{a,2}}$ (divided by $\PP$) are exactly the two end arc segments of $\gamma_a$. Then by 
 $(\widetilde{\gamma_{a,1}}, \widetilde{\gamma_{a,2}})\neq (\gamma_a,\gamma_a)$, $\widetilde{\gamma_{a,4}}$ is a sub-arc segment of $\widetilde{\gamma_{a,1}}$ and a sub-arc segment of $\widetilde{\gamma_{a,2}}$, we must have  $ \widetilde{\gamma_{a,4}}$ is a sub-arc segment of $\gamma_a$ with no endpoint in $\MM$. Since $\widetilde{\gamma_{a,3}} \sim \widetilde{\gamma_{a,4}}$, neither endpoint of $\widetilde{\gamma_{a,3}}$ is a marked point, which contradicts the requirement from Theorem~\ref{t:basis of rad fromed by angles}.  Therefore, we must have  $f_{\angle\alpha} f_{\angle\alpha}=0$. 
\end{proof}
 \begin{figure}[htpb]\centering
	\begin{tikzpicture}[scale=0.8]
    \node [blue]at (4.8,-0.3){\tiny{$\af_i$}};
	\draw[thick,fill=black!20] (5,0.17)arc (250:290:3);
    \node at (6,0.2){$p_\beta$};
    \fill(6,0) circle(1.5pt);
    \draw[thick,fill=black!20] (7,-4.17)arc (70:110:3);
    \node at (6,-4.2){$p_\alpha$};
    \fill(6,-4) circle(1.5pt);
    \draw[thick,fill=black!20] (11,-1.17)arc (160:200:3);
    \node at (11.2,-2){$p_\kappa$};
    \fill(10.8,-2) circle(2pt);
    \draw[red,thick] (6,0)--(6,-4);
     \draw[red,thick,->] (8.5,-1) to [bend left=30] (6,-1);
    \node at(7,-1){\tiny{$\angle\beta$}};
     \draw[red,thick,->] (6,-3.5) to [bend left=30](7.3,-3.5);
    \node at(6.5,-3.5){\tiny{$\angle\alpha$}};
    \draw[red,thick] (6,0)--(10.8,-2);
    \draw[red,thick] (6,-4)--(10.8,-2);
    \node [red]at (5.7,-2){\tiny{$\gamma_b$}};
    \node [red]at (8.4,-3.2){\tiny{$\gamma_c$}};
    \node [red]at (8.8,-0.8){\tiny{$\gamma_a$}};
    \draw [blue,thick](5,-0.3)..controls (6,-.5) and (7,-.5)..(8,-0.2);
    \node [blue]at (4.8,-0.3){\tiny{$\af_i$}};
    \draw [blue,thick](5,-3)..controls (6,-2.8) and (8,-2.8)..(9,-3.2);
    \node [blue]at (5,-3.3){\tiny{$\af_j$}};
    \draw [blue,thick](10.5,-1)..controls (9.5,-1.5) and (10,-2)..(10.5,-2.5);
    \node [blue]at (10.5,-0.8){\tiny{$\af_k$}};
	\end{tikzpicture}
	\caption{Case  when $p_\alpha\ne p_\beta$}
	\label{f:relation in A_R, endpoint are disctinct}
\end{figure}

\begin{figure}[h]
\begin{tikzpicture}

\draw[thick,fill=black!20] (2,0.17)arc (250:290:3);
\node at (3,0.2){$p_\beta=p_\alpha$};
\fill(3,0) circle(1.5pt);
\draw[thick,color=red] (3,0) parabola (1,-2);
\draw[thick,color=red,dashed] (1,-2) arc (150:260:1);
\draw[thick,color=red,->] (2.5,-1)--(2,-0.5);
\node at (2,-1) {{\tiny$\angle\beta$}};
\draw[thick,color=red] (3,0) .. controls (2.6,-0.9) and (2.4,-1.1) .. (2,-2.2);
\draw[thick,color=red] (3,0) .. controls (3.4,-0.9) and (3.6,-1.1) .. (4,-2.2);
\draw[thick,color=red] (3,0) parabola (5,-2);
\draw[thick,color=red,dashed] (5,-2) arc (30:-70:1);
\node[red] at (5.5,-2.5){$\gamma_b$};
\node[red] at (4,-2.5){$\gamma_c$};
\node[red] at (2,-2.5){$\gamma_a$};
\draw[thick,color=red,->](4,-0.5)--(3.5,-1);
\draw [blue,thick](1.2,-0.7)..controls (2.5,-1.6) and (3.5,-1.6)..(4.7,-0.8);
\node at (4,-1) {\tiny$\angle\alpha$};
\node [blue] at (0.6,-0.7){\tiny{$\af_i\in\PP$}};

\draw[thick,fill=black!20] (9,0.17)arc (250:290:3);
\node at (10,0.2){$p_\beta=p_\alpha$};
\fill(10,0) circle(1.5pt);
\draw[thick,color=red] (10,0) parabola (8,-2);
\draw[thick,color=red,dashed] (9,-2.2) arc (160:220:1);
\draw[thick,color=red,->](9.5,-1)--(9,-0.5);
\node at (9,-1) {\tiny$\angle\alpha$};
\draw[thick,color=red] (10,0) .. controls (9.6,-0.9) and (9.4,-1.1) .. (9,-2.2);
\draw[thick,color=red] (10,0) .. controls (10.4,-0.9) and (10.6,-1.1) .. (11,-2.2);
\draw[thick,color=red] (10,0) parabola (12,-2);
\draw[thick,color=red,dashed] (11,-2.2) arc (20:-40:1);
\draw[thick,color=red,->](11,-0.5)--(10.5,-1);
\node at (11,-1) {\tiny$\angle\beta$};
\node[red] at (10.7,-2.5){$\gamma_b$};
\node[red] at (12,-2.5){$\gamma_a$};
\node[red] at (8,-2.5){$\gamma_c$};
\draw [blue,thick](8.2,-0.7)..controls (9.5,-1.6) and (10.5,-1.6)..(11.7,-0.8);
\node [blue] at (7.6,-0.7){\tiny{$\af_i\in\PP$}};
\end{tikzpicture}
\caption{Case  when $p_\beta=p_\alpha$}\label{f:gamma_b is a loop}
\end{figure}
\begin{lemma}\label{l:p_alpha ne p_beta}
    Let $\gamma_a,\gamma_b,\gamma_c\in\mathbf{R}$ with $$\angle\alpha\in\ha_{p_\alpha,\mathbf{P-free}}(\gamma_b,\gamma_c),\quad \angle\beta\in\ha_{p_\beta, \mathbf{P}-\mathbf{free}}(\gamma_a,\gamma_b).$$
    Then $f_{\angle\alpha} f_{\angle\beta}=0$ in each of the following cases:
    \begin{itemize}
        \item [(1)] $p_\beta\ne p_\alpha$ ({\em cf.} Figure~\ref{f:relation in A_R, endpoint are disctinct});
        \item[(2)]$\gamma_b$ is a loop and $\angle\beta$, $\angle\alpha$ are positioned as Figure~\ref{f:gamma_b is a loop}.
    \end{itemize}
   
\end{lemma}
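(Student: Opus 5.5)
We argue by contradiction, in the same style as the proof of Lemma~\ref{l:relation for gamma_a is a loop}. Let $f_{\angle\beta}$ be induced by the pair $(\widetilde{\gamma_{a,1}},\widetilde{\gamma_{b,1}})\in\aad(\gamma_a)\times\cad(\gamma_b)$ with common starting point $p_\beta$, and let $f_{\angle\alpha}$ be induced by $(\widetilde{\gamma_{b,2}},\widetilde{\gamma_{c,1}})\in\aad(\gamma_b)\times\cad(\gamma_c)$ with common starting point $p_\alpha$. The composite of two string-module maps of this type is computed by overlapping the image factor with the source factor in $\gamma_b$. If $f_{\angle\alpha}f_{\angle\beta}\neq 0$, this composite is a nonzero morphism $M(\gamma_a)\to M(\gamma_c)$ induced by a pair $(\widetilde{\gamma_{a,3}},\widetilde{\gamma_{c,3}})$ with $\widetilde{\gamma_{a,3}}\sim\widetilde{\gamma_{b,4}}\sim\widetilde{\gamma_{c,3}}$. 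Here $\widetilde{\gamma_{b,4}}$ is a common sub-segment of $\widetilde{\gamma_{b,1}}$ and $\widetilde{\gamma_{b,2}}$, $\widetilde{\gamma_{a,3}}\in\aad(\gamma_a)$ is a sub-segment of $\widetilde{\gamma_{a,1}}$, and $\widetilde{\gamma_{c,3}}\in\cad(\gamma_c)$ is a sub-segment of $\widetilde{\gamma_{c,1}}$.

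The composite is radical, since it factors through radical maps. By Theorem~\ref{t:basis of rad fromed by angles}, it is therefore a linear combination of maps $f_{\angle\delta}$ with $\angle\delta\in\apfree(\gamma_a,\gamma_c)$. More precisely, being a single basis map of Proposition~\ref{p:dim of hom}, it equals some $f_{\angle\delta}$. Hence the segments $\widetilde{\gamma_{a,3}}$ and $\widetilde{\gamma_{c,3}}$ must start at a common marked point $q$, which is an endpoint of both $\gamma_a$ and $\gamma_c$. Consequently $\widetilde{\gamma_{b,4}}$ must also start at the marked point $q$; this follows from the definition of homotopic segments, since end segments are homotopic only to end segments at the same marked point. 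The goal is then to show that $\widetilde{\gamma_{b,4}}$ cannot contain an end arc segment of $\gamma_b$, which gives the contradiction.

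\textbf{Case (1): $p_\beta\neq p_\alpha$.} The segment $\widetilde{\gamma_{b,1}}$ starts at the end of $\gamma_b$ lying at $p_\beta$, and $\widetilde{\gamma_{b,2}}$ starts at the end lying at $p_\alpha$. Suppose $\widetilde{\gamma_{b,4}}$, which is contained in both, contains an end segment of $\gamma_b$. That end segment is then either the end at $p_\beta$ or the end at $p_\alpha$, so one of the two segments runs over the whole of $\gamma_b$.

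If $\widetilde{\gamma_{b,1}}=\gamma_b$, then $\widetilde{\gamma_{a,1}}\sim\gamma_b$ with the same intersection vector. Admissibility then forces $\gamma_a$ and $\gamma_b$ to share both endpoints. Using $\Int(\gamma_a,\gamma_b)=0$ and the minimal-position configuration of Figure~\ref{f:relation in A_R, endpoint are disctinct}, one checks that the common start $q$ would have to be $p_\alpha$, with $\gamma_a$ and $\gamma_c$ both emanating there. This would make the angle from $\gamma_a$ to $\gamma_c$ at $p_\alpha$ contain $\gamma_b$ in the wrong orientation, contradicting the negative orientation of $\angle\beta$. Concretely, $\gamma_a$ lies on the anticlockwise side of $\gamma_b$ at $p_\beta$, and tracing along the parallel segments places it on the wrong side at $p_\alpha$ relative to $\gamma_c$. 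The case $\widetilde{\gamma_{b,2}}=\gamma_b$ is symmetric.

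I expect this orientation bookkeeping to be the main obstacle. The cleanest route is probably to translate into strings. By Proposition~\ref{p:dim pro two strings}, $f_{\angle\beta}$ has image factor a bottom substring $\rho_1$ of $\gamma_b$ starting at the $p_\beta$-end, and $f_{\angle\alpha}$ has source factor a top substring $\rho_2$ starting at the $p_\alpha$-end. The composite is nonzero only if $\rho_1\cap\rho_2$ is simultaneously a bottom substring of $\rho_1$'s chain and a top substring of $\rho_2$'s chain. With the ends at different points this forces an overlap that is neither initial nor terminal, and hence no marked endpoint, as required.

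\textbf{Case (2): $\gamma_b$ a loop in the configuration of Figure~\ref{f:gamma_b is a loop}.} Here the two factors start at the two different ends of the loop $\gamma_b$, and the $\PP$-arc $\af_i$ cuts both angles. We proceed exactly as in Lemma~\ref{l:relation for gamma_a is a loop}: the first segments of $\widetilde{\gamma_{b,1}}$ and $\widetilde{\gamma_{b,2}}$ are the two distinct end segments of $\gamma_b$. Since neither factor is the whole of $\gamma_b$ (by the Remark, the fan has length at least one), the overlap $\widetilde{\gamma_{b,4}}$ has no endpoint in $\MM$. This contradicts the requirement derived above from Theorem~\ref{t:basis of rad fromed by angles}, and so $f_{\angle\alpha}f_{\angle\beta}=0$.
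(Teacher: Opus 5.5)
Your outline is the same as the paper's proof. You argue by contradiction, realize the composite by a triple $\widetilde{\gamma_{a,3}}\sim\widetilde{\gamma_{b,4}}\sim\widetilde{\gamma_{c,3}}$, use Theorem~\ref{t:basis of rad fromed by angles} to force a marked endpoint, and then show the overlap inside $\gamma_b$ has none. However, the step that carries the argument is not valid.

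You pass from ``$\widetilde{\gamma_{a,3}}$ starts at $q\in\MM$'' to ``$\widetilde{\gamma_{b,4}}$ starts at $q$'', on the grounds that end segments are homotopic only to end segments at the same marked point. For the homotopy of segments in Proposition~\ref{p:dim of hom} this is false. That homotopy only records the common string factor, i.e.\ the common sequence of crossed arcs of $\PP$. Indeed, the proof of Theorem~\ref{t:basis of rad fromed by angles} itself pairs a segment of type $(--)$, ending at $q_2$, with a homotopic segment of type $(-+)$ that does not end at a marked point; these are the pairs giving surjections onto $M(\gamma_2)$.

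So your argument does not exclude, for instance, that $\widetilde{\gamma_{a,1}}$ exhausts all crossings of $\gamma_a$, so that the overlap is initial in $\gamma_a$ at its far endpoint while lying strictly inside $\gamma_b$. The paper's proof makes the same inference in contrapositive form.

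The same issue undermines your Case~(1) subcase. If $\widetilde{\gamma_{b,1}}$ covers all crossings of $\gamma_b$, then $f_{\angle\beta}$ is only surjective, and $\gamma_a$ may continue beyond the last crossing of $\gamma_b$. So ``admissibility forces $\gamma_a$ and $\gamma_b$ to share both endpoints'' is unjustified, and the orientation discussion that follows (``one checks that\dots'') never actually derives a contradiction.

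The missing ingredient is geometric, and it is where the negativity of both angles genuinely enters. Argue directly with the factors. The composite $f_{\angle\alpha}f_{\angle\beta}$ is the graph map of the overlap, inside $\gamma_b$, of two factors:
\begin{itemize}
\item the image factor $E$ of $f_{\angle\beta}$, an initial piece of $\gamma_b$ at its $p_\beta$-end;
\item the source factor $E'$ of $f_{\angle\alpha}$, an initial piece at its $p_\alpha$-end.
\end{itemize}
The composite is zero when $E\cap E'=\emptyset$; this is immediate on the standard bases of string modules.

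In case~(1), $\gamma_b$ is not a loop, since $p_\alpha\neq p_\beta$ are both its endpoints. Orient $\gamma_b$ from $p_\beta$ to $p_\alpha$. Since $\gamma_b$ is clockwise of $\gamma_a$ at $p_\beta$, and $\gamma_c$ is clockwise of $\gamma_b$ at $p_\alpha$, both $\gamma_a$ (along $E$) and $\gamma_c$ (along $E'$) run parallel to $\gamma_b$ on its left-hand side. If $E\cap E'\neq\emptyset$, then along the common stretch one of them lies between $\gamma_b$ and the other, since arcs of $\RR$ do not cross.

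If the inner arc is $\gamma_c$, follow it towards $p_\beta$. In each tile it is confined to the region between $\gamma_b$ and $\gamma_a$, which contains no marked point until these two meet at $p_\beta$. So $\gamma_c$ crosses exactly the arcs crossed by $\gamma_b$ and ends at $p_\beta$, forcing $\gamma_c=\gamma_b$. This is impossible, because a non-loop has no angle to itself. If the inner arc is $\gamma_a$, follow it towards $p_\alpha$ in the same way.

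Hence $E\cap E'=\emptyset$, which proves case~(1) without Theorem~\ref{t:basis of rad fromed by angles}. The same trapping argument, run between the two ends of the loop, handles the configuration of Figure~\ref{f:gamma_b is a loop} in case~(2).
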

\begin{proof}
     Let
$\widetilde{\gamma_{a}}$ be the arc segments of $\gamma_a$, let $\widetilde{\gamma_{b,1}},~\widetilde{\gamma_{b,2}}$ be the arc segments of $\gamma_b$ and let $\widetilde{\gamma_{c}}$ the arc segments of $\gamma_c$ such that  $f_{\angle\beta}$ is induced by the pair $(\widetilde{\gamma_{a}},\widetilde{\gamma_{b,1}})$ and
 $f_{\angle\alpha}$ is induced by the pair $(\widetilde{\gamma_{b,2}},\widetilde{\gamma_{c}})$. Thus there are orientations of $\widetilde{\gamma_{a}}$, $\widetilde{\gamma_{b,1}},~\widetilde{\gamma_{b,2}}$ and $\widetilde{\gamma_{c}}$  such that
 \[(\widetilde{\gamma_{a}},~\widetilde{\gamma_{b,1}})\in\aad(\gamma_a)\times\cad(\gamma_b), \Intv(\widetilde{\gamma_{a}})=\Intv(\widetilde{\gamma_{b,1}}), \widetilde{\gamma_{a}}\sim\widetilde{\gamma_{b,1}},~\widetilde{\gamma_{a}}(0)=\widetilde{\gamma_{b,1}}(0)=p_{\beta},\]and
 \[(\widetilde{\gamma_{b,2}},~\widetilde{\gamma_{c}})\in\aad(\gamma_b)\times\cad(\gamma_c),\Intv(\widetilde{\gamma_{b,2}})=\Intv(\widetilde{\gamma_{c}}), \widetilde{\gamma_{b,2}}\sim\widetilde{\gamma_{c}},~~\widetilde{\gamma_{b,2}}(0)=\widetilde{\gamma_{c}}(0)=p_{\alpha}.\]
Suppose $f_{\angle\alpha} f_{\angle\beta}\neq 0$. Then there exist three arc segments 
 $\widetilde{\gamma_{a,1}},\widetilde{\gamma_{b,3}}, \widetilde{\gamma_{c,1}}$ such that 
 \begin{itemize}
     \item $\widetilde{\gamma_{a,1}}\in\aad(\gamma_{a})$ is a sub-arc segment of $\widetilde{\gamma_{a}}$;
     \item  $\widetilde{\gamma_{b,3}}$ is a sub-arc segment of $\widetilde{\gamma_{b,1}}$ and a sub-arc segment of $\widetilde{\gamma_{b,2}}$;
     \item  $\widetilde{\gamma_{c,1}}\in\cad(\gamma_{c})$ is a sub-arc segment of $\widetilde{\gamma_{c}}$;
 \end{itemize}
   satisfying $$ \Intv(\widetilde{\gamma_{a,1}})=\Intv(\widetilde{\gamma_{b,3}})=\Intv(\widetilde{\gamma_{c,1}}), \quad \widetilde{\gamma_{a,1}}\sim\widetilde{\gamma_{b,3}}\sim\widetilde{\gamma_{c,1}}$$
 and $f_{\angle\alpha} f_{\angle\beta}$ is induced by $(\widetilde{\gamma_{a,1}},~\widetilde{\gamma_{c,1}}).$
Note that by Theorem~\ref{t:basis of rad fromed by angles}, we must have either $\widetilde{\gamma_{a,1}}(0)\in\MM$ or   $\widetilde{\gamma_{a,1}}(1)\in\MM$.
However, 
in case $(1)$, both $\widetilde{\gamma_{b,3}}(0)$ and $\widetilde{\gamma_{b,3}}(1)$ are not marked points, so
 $\widetilde{\gamma_{a,1}}(0)\notin\MM,  \widetilde{\gamma_{a,1}}(1)\notin\MM$,  which contradicts the requirement from Theorem~\ref{t:basis of rad fromed by angles}. Hence we must hve  $f_{\angle\alpha} f_{\angle\beta}=0$ in this case.
In case $(2)$, note that $\gamma_b$ is a loop,  by the proof of  Lemma~\ref{l:relation for gamma_a is a loop},
one can also get that   $\widetilde{\gamma_{b,3}}(0)\notin \MM$ and $\widetilde{\gamma_{b,3}}(1)\notin\MM$. So we have $\widetilde{\gamma_{a,1}}(0)\notin \MM$ and $\widetilde{\gamma_{a,1}}(1)\notin \MM$,  which contradicts the requirement from Theorem~\ref{t:basis of rad fromed by angles}. Thus we must have $f_{\angle\alpha} f_{\angle\beta}=0$.
\end{proof}

\begin{lemma}\label{l:conditions for if and only if p_alphap_beta neq 0}
    Let $\gamma_a,\gamma_b,\gamma_c\in\mathbf{R}$ with $$\angle\alpha_1\in\ha_{p_1, \mathbf{P-free}}(\gamma_a,\gamma_b),\quad \angle\alpha_2\in\ha_{p_2,\mathbf{P-free}}(\gamma_b,\gamma_c).$$
Then $f_{\angle\alpha_2} f_{\angle\alpha_1}\neq 0$ if and only if  $p_1= p_2$ and  $\angle\alpha_2$ is adjacent to $\angle\alpha_1$ with $\angle\alpha_2$ is on the clockwise side of $\angle\alpha_1$ (cf.  Figure \ref{f:the location for k,j,i-}).
   \end{lemma}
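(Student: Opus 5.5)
The plan is to prove the two implications separately, using only Lemmas~\ref{l:f_alpha_3 can be factored}, \ref{l:relation for gamma_a is a loop}, \ref{l:p_alpha ne p_beta} and Theorem~\ref{t:basis of rad fromed by angles}.

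\emph{Sufficiency.} Suppose $p_1=p_2=p$ and $\angle\alpha_2$ is adjacent to $\angle\alpha_1$ on its clockwise side. Lemma~\ref{l:f_alpha_3 can be factored} gives $\angle\alpha_3=\angle\alpha_1+\angle\alpha_2\in\appfree(\gamma_a,\gamma_c)$ and $f_{\angle\alpha_2}f_{\angle\alpha_1}=f_{\angle\alpha_3}$. This is a member of the basis of Theorem~\ref{t:basis of rad fromed by angles}, hence nonzero.

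\emph{Necessity.} I argue by contraposition. Assume $f_{\angle\alpha_2}f_{\angle\alpha_1}\neq0$.
\begin{enumerate}
\item[(i)] Lemma~\ref{l:p_alpha ne p_beta}(1), applied with $\angle\beta=\angle\alpha_1$ and $\angle\alpha=\angle\alpha_2$, forces $p_1=p_2=:p$.
\item[(ii)] Reuse the segment bookkeeping from the proof of Lemma~\ref{l:p_alpha ne p_beta}. Let $f_{\angle\alpha_1}$ be induced by $(\widetilde{\gamma_a},\widetilde{\gamma_{b,1}})$ and $f_{\angle\alpha_2}$ by $(\widetilde{\gamma_{b,2}},\widetilde{\gamma_c})$, all starting at $p$. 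A nonzero composite is induced by a pair $(\widetilde{\gamma_{a,1}},\widetilde{\gamma_{c,1}})$ factoring through a common subsegment $\widetilde{\gamma_{b,3}}$ of $\widetilde{\gamma_{b,1}}$ and $\widetilde{\gamma_{b,2}}$.
\item[(iii)] By Theorem~\ref{t:basis of rad fromed by angles}, this composite is a combination of $f_{\angle}$ for $\PP$-free angles. Each such basis element is induced by a pair of segments one of whose endpoints is a marked point. So $\widetilde{\gamma_{b,3}}$ has an endpoint in $\MM$.
\item[(iv)] The only marked endpoints available are the starting point $p$ of $\widetilde{\gamma_{b,1}}$ and of $\widetilde{\gamma_{b,2}}$. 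Hence these two segments start at the same end of $\gamma_b$ at $p$. If they started at different ends of $\gamma_b$ (possible only when $\gamma_b$ is a loop), we would be in the configuration of Lemma~\ref{l:p_alpha ne p_beta}(2), and the composite would vanish.
\end{enumerate}

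So $\angle\alpha_1$ and $\angle\alpha_2$ both have $\gamma_b$ as a side at the same germ of $\gamma_b$ at $p$. The first ends at $\gamma_b$ and the second starts there. Both are negative, so $\angle\alpha_1$ lies on the anticlockwise side of that germ and $\angle\alpha_2$ on its clockwise side. Their fans therefore concatenate as in (C1), and they are adjacent with $\angle\alpha_2$ clockwise of $\angle\alpha_1$.

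\emph{Main obstacle.} The delicate step is (iv): showing that the composite is nonzero only when the two segments of $\gamma_b$ emanate from the same end of $\gamma_b$ at $p$. I expect this to need the loop analysis from Lemma~\ref{l:relation for gamma_a is a loop}, plus a check that, when the fans do not concatenate, a $\PP$-arc would make $\widetilde{\gamma_{b,1}}$ clockwise-admissible and $\widetilde{\gamma_{b,2}}$ anticlockwise-admissible with incompatible lengths, as in the $x\neq y$ dichotomy of Lemma~\ref{l:f_alpha_3 can be factored}. I would handle this by reproducing that dichotomy and showing the common subsegment $\widetilde{\gamma_{b,3}}$ loses its marked endpoint otherwise.
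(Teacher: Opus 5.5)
Your sufficiency argument and step (i) match the paper's proof, and for a non-loop $\gamma_b$ your final paragraph is the paper's one-line argument. You diverge in the loop case.

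The paper splits that case according to whether $\gamma_a$ and/or $\gamma_c$ coincide with $\gamma_b$, and whether one angle contains the other. It then kills the bad configurations algebraically:
\begin{itemize}
\item For $\gamma_a=\gamma_b=\gamma_c$, the $\PP$-free self-angle is unique and $f_{\angle\alpha_1}^2=0$ by Lemma~\ref{l:relation for gamma_a is a loop}.
\item For $\gamma_a=\gamma_b\neq\gamma_c$ with $\angle\alpha_1$ contained in $\angle\alpha_2$, Lemma~\ref{l:f_alpha_3 can be factored} gives $f_{\angle\alpha_2}f_{\angle\alpha_1}=f_{\angle\alpha_3}f_{\angle\alpha_1}^2=0$; the case $\gamma_c=\gamma_b$ is symmetric.
\item Lemma~\ref{l:p_alpha ne p_beta}(2) is used only when $\gamma_a,\gamma_c\neq\gamma_b$.
\end{itemize}
You replace all of this with one dichotomy: same germ of $\gamma_b$ versus opposite germs. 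That would be more uniform, but step (iv), which carries the whole weight, is not established.

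The gap is the sentence ``the only marked endpoints available are the starting point $p$''. Nothing you say prevents $\widetilde{\gamma_{b,1}}$ or $\widetilde{\gamma_{b,2}}$ from being all of $\gamma_b$. For example, $f_{\angle\alpha_1}$ could be an epimorphism onto $M(\gamma_b)$, as with the type $(--)$ clockwise segment in case (1) of the proof of Theorem~\ref{t:basis of rad fromed by angles}. Likewise $f_{\angle\alpha_2}$ could be a monomorphism out of $M(\gamma_b)$.

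Suppose $\gamma_b$ is a loop and $\widetilde{\gamma_{b,1}}=\gamma_b$. Its far end is again $p$, reached through the other germ. The common subsegment with a $\widetilde{\gamma_{b,2}}$ issuing from that other germ is then $\widetilde{\gamma_{b,2}}$ itself, which does have a marked endpoint. So ``marked endpoint $\Rightarrow$ same germ'' does not follow.

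Appealing to Lemma~\ref{l:p_alpha ne p_beta}(2) does not repair this, for two reasons:
\begin{itemize}
\item That lemma is stated only for the two pictured configurations, with $\gamma_a,\gamma_c$ drawn distinct from the loop. In particular it does not cover $\gamma_a=\gamma_b$ or $\gamma_c=\gamma_b$.
\item Its proof borrows from Lemma~\ref{l:relation for gamma_a is a loop} the fact that neither segment is the whole loop. That was automatic there, because the two segments have equal length and are not $(\gamma_a,\gamma_a)$. It is not automatic here, since the two segments come from different morphisms and have unrelated lengths.
\end{itemize}

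To make your route work, you need one of two things. Either supply a geometric argument that, for a loop $\gamma_b$, a segment coming from one of the two angles can be all of $\gamma_b$ only if no $\PP$-free negative angle of the required kind sits at the other germ. Or fall back on the paper's case analysis for the coincident cases. Your ``main obstacle'' paragraph correctly locates the difficulty but only announces a plan; as written, (iv) restates the conclusion rather than proving it.
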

\begin{proof} If  $p_1=p_2$ and $\angle\alpha_2$ is adjacent to $\angle\alpha_1$ with $\angle\alpha_2$ is on the clockwise side of $\angle\alpha_1$, then Lemma \ref{l:f_alpha_3 can be factored} yields $$f_{\angle\alpha_2} f_{\angle\alpha_1}=f_{\angle\alpha_2+\angle\alpha_1}\neq 0.$$
Conversely, suppose $f_{\angle\alpha_2} f_{\angle\alpha_1}\neq 0$. By Lemma~\ref{l:p_alpha ne p_beta}, we must have $p_1=p_2$. Let $p_1=p_2=p$. 

If $\gamma_b$ is not a loop, then the definition of the two angles imply that $\angle\alpha_2$ must be  adjacent to $\angle\alpha_1$ with $\angle\alpha_2$ is on the clockwise side of $\angle\alpha_1$.

It remains to consider the case where $\gamma_b$ is a loop. 
   \begin{enumerate}
       \item If $\gamma_a=\gamma_b=\gamma_c$, then all three arcs coincide. Because  $\gamma_b$ is a loop, so the set $\ha_{p, \mathbf{P}-\textbf{free}}(\gamma_b,\gamma_b)$ contains exactly one element. Then we must have   $\angle\alpha_2=\angle\alpha_1$. Lemma~\ref{l:relation for gamma_a is a loop} gives $$f_{\angle\alpha_2} f_{\angle\alpha_1}=f_{\angle\alpha_1} f_{\angle\alpha_1}=0,$$ a contradiction.
  \item If $\gamma_a=\gamma_b\neq \gamma_c$, then 
        \[
        \angle\alpha_1\in\ha_{p, \mathbf{P}-\mathbf{free}}(\gamma_b,\gamma_b),\quad 
        \angle\alpha_2\in\ha_{p,\mathbf{P}-\mathbf{free}}(\gamma_b,\gamma_c).
        \]
        We consider two subcases.
        
        \textbf{Case 2a:} $\angle\alpha_1$ is a sub-angle of $\angle\alpha_2$ (\ie the fan of  $\angle\alpha_1$ is a sub-fan  of the fan of $\angle\alpha_2$). Then $\angle\alpha_2=\angle\alpha_1+\angle\alpha_3$ for some $\angle\alpha_3\in\ha_{p,\mathbf{P}-\mathbf{free}}(\gamma_b,\gamma_c)$, where $\angle\alpha_3$ is adjacent to $\angle\alpha_1$ with $\angle\alpha_3$ on the clockwise side of $\angle\alpha_1$. By Lemmas~\ref{l:f_alpha_3 can be factored} and \ref{l:relation for gamma_a is a loop},
        \[
        f_{\angle\alpha_2}f_{\angle\alpha_1}=f_{\angle\alpha_3}f_{\angle\alpha_1}f_{\angle\alpha_1}=f_{\angle\alpha_3}f_{\angle\alpha_1}^2=0,
        \]
        again a contradiction.
        
        \textbf{Case 2b:} $\angle\alpha_1$ is not a sub-angle of $\angle\alpha_2$. Then $\angle\alpha_2$ must be adjacent to $\angle\alpha_1$ with $\angle\alpha_2$ on the clockwise side of $\angle\alpha_1$, as desired.

\item If $\gamma_a\neq \gamma_b= \gamma_c$, the argument is symmetric to the case above.

 \item Suppose $\gamma_a\neq \gamma_b$ and $\gamma_b\neq \gamma_c$. Let $\angle\alpha_3\in \ha_{p, \mathbf{P}-\mathbf{free}}(\gamma_b,\gamma_b)$. 
        
        If exactly one of $\angle\alpha_1$ and $\angle\alpha_2$ is a sub-angle of $\angle\alpha_3$, then $\angle\alpha_2$ must be adjacent to $\angle\alpha_1$ with $\angle\alpha_2$ on the clockwise side of $\angle\alpha_1$.
        
        Otherwise, the configuration of $\angle\alpha_1$ and $\angle\alpha_2$ can only be as shown in Figure~\ref{f:gamma_b is a loop}, where $\angle\beta=\angle\alpha_1$ and $\angle\alpha=\angle\alpha_2$. By Lemma~\ref{l:p_alpha ne p_beta}, we have $f_{\angle\alpha_2} f_{\angle\alpha_1}=0$, contradicting our assumption.
\end{enumerate}
   Therefore the lemma holds.
\end{proof}

\begin{lemma}\label{l:fp=0=>fafb=0}
Let $\gamma_1,\cdots,\gamma_{s+1}\in\mathbf{R}$ with $s\ge 2$.
  Suppose that for each $1\leq i\leq s$, there exists an angle $$\angle\alpha_i\in\apfree(\gamma_i,\gamma_{i+1}),$$ and suppose further that $$f_{\angle\alpha_s}\cdots f_{\angle\alpha_1}=0.$$ Then there exists some $1\leq t\leq s-1$ such that $$f_{\angle\alpha_{t+1}}f_{\angle\alpha_t}=0.$$
\end{lemma}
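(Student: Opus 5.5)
We argue by contraposition. Assume that $f_{\angle\alpha_{t+1}}f_{\angle\alpha_t}\neq 0$ for every $1\leq t\leq s-1$; we will show that $f_{\angle\alpha_s}\cdots f_{\angle\alpha_1}\neq 0$. Throughout, write $p_t$ for the marked point at which $\angle\alpha_t$ sits.

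First apply Lemma~\ref{l:conditions for if and only if p_alphap_beta neq 0} to each consecutive pair $(\angle\alpha_t,\angle\alpha_{t+1})$. For every $t$ it gives $p_t=p_{t+1}$, and it says that $\angle\alpha_{t+1}$ is adjacent to $\angle\alpha_t$ and lies on its clockwise side. Hence all the angles sit at one common marked point $p$. Going from $\gamma_1$ to $\gamma_{s+1}$, each fan begins where the previous one ends and continues clockwise.

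Next we concatenate the fans by induction on $t$. The goal is that, for each $t$, the sum $\angle\beta_t:=\angle\alpha_1+\cdots+\angle\alpha_t$ is a well-defined $\PP$-$\mathbf{free}$ negatively oriented angle in $\appfree(\gamma_1,\gamma_{t+1})$, and that
$$f_{\angle\beta_t}=f_{\angle\alpha_t}\cdots f_{\angle\alpha_1}.$$
The base case $t=1$ is trivial. For the inductive step, we want to apply Lemma~\ref{l:f_alpha_3 can be factored} to the pair $\angle\beta_t$ and $\angle\alpha_{t+1}$. That lemma requires $\angle\alpha_{t+1}$ to be adjacent to the whole angle $\angle\beta_t$, not merely to $\angle\alpha_t$. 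In other words, the concatenation of the fan of $\angle\beta_t$ with that of $\angle\alpha_{t+1}$ must still be a fan at $p$, i.e.\ a subsequence of the complete $(\PP\cup\RR)$-fan at $p$ that does not wrap around.

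Granting that, the lemma gives $\angle\beta_{t+1}\in\appfree(\gamma_1,\gamma_{t+2})$ and $f_{\angle\beta_{t+1}}=f_{\angle\alpha_{t+1}}f_{\angle\beta_t}$. At $t=s$ this yields $f_{\angle\alpha_s}\cdots f_{\angle\alpha_1}=f_{\angle\beta_s}$. This map is nonzero, since by Theorem~\ref{t:basis of rad fromed by angles} it is a basis element of $\mathbf{rad}(M(\gamma_1),M(\gamma_{s+1}))$. That is the desired contradiction.

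The main obstacle is the adjacency check in the inductive step. Since the complete fan at $p$ is a finite linear sequence, clockwise concatenations can fail only in one way: the same arc position would have to be reused. This can happen only when some $\gamma_i$ is a loop at $p$, which occurs twice in the complete fan. We would handle this case using the loop analysis in the proof of Lemma~\ref{l:conditions for if and only if p_alphap_beta neq 0} together with Lemmas~\ref{l:relation for gamma_a is a loop} and~\ref{l:p_alpha ne p_beta}. Each consecutive product is nonzero, so each step moves strictly clockwise to the adjacent position. The ordered positions of the successive fans along the complete fan are therefore strictly monotone, and every partial concatenation is a genuine fan.

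If this monotonicity argument turns out to be delicate, we will fall back on the segment description. Track the admissible segments through Cases 1 and 2 in the proof of Lemma~\ref{l:f_alpha_3 can be factored}, and show directly that the composite is induced by a pair of homotopic segments starting at $p$. By Proposition~\ref{p:dim of hom}, that composite is then nonzero.
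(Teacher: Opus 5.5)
Your proposal is correct and follows the paper's proof. Like the paper, you argue by contradiction and use Lemma~\ref{l:conditions for if and only if p_alphap_beta neq 0} to place all angles at one marked point, each adjacent clockwise to the previous one; then Lemma~\ref{l:f_alpha_3 can be factored} and Theorem~\ref{t:basis of rad fromed by angles} give $f_{\angle\alpha_s}\cdots f_{\angle\alpha_1}=f_{\angle\alpha_1+\cdots+\angle\alpha_s}\neq 0$.

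Your induction on the partial sums spells out the paper's terse claim that the sum is well-defined. The loop detour and the fallback are unnecessary. Adjacency in the sense of (C1)/(C2) already fixes the position of the shared side in the linear complete fan, so the positions strictly decrease and every partial concatenation is automatically a fan.
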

\begin{proof} Suppose, for contradiction, that $f_{\angle\alpha_{t+1}}f_{\angle\alpha_t}\ne0$ for every $1\le t\le s-1$. Then by Lemma~\ref{l:conditions for if and only if p_alphap_beta neq 0} and Lemma \ref{l:f_alpha_3 can be factored}, all angles  share a common vertex $p\in\MM$, and the sum $$\angle\alpha_1+\dots+\angle\alpha_s\in\angle^{-}_p(\gamma_1,\gamma_{s+1})$$ is well-defined. Moreover, since each $\angle\alpha_i\in\appfree(\gamma_i,\gamma_{i+1})$, we have $$\angle\alpha_1+\dots+\angle\alpha_s\in\appfree(\gamma_1,\gamma_{s+1}).$$ Combining  Lemma \ref{l:f_alpha_3 can be factored} and Theorem~\ref{t:basis of rad fromed by angles}, we obtain \[f_{\angle\alpha_s}\cdots f_{\angle\alpha_1}=f_{\angle\alpha_1+\dots+\angle\alpha_s}\ne0,\]
which contradicts the hypothesis.
\end{proof}

\subsection{$\textbf{PR-free}$ negatively oriented  angles}
Let $\mathbf{R}$ be a partial dissection on $(\SS,\MM,\PP)$.
\begin{definition}($\mathbf{PR}$-$\mathbf{free}$ angle)
Let $\gamma_1, \gamma_2\in\mathbf{R}$ share a common endpoint $p$, 
and let $\angle\alpha$ be an oriented angle from $\gamma_1$ to $\gamma_2$ at $p$. We say that
 $\angle\alpha$ is \emph{$\mathbf{PR}$-$\mathbf{free}$} if the fan of $\angle\alpha$ contains no interior arcs from $\PP\cup \mathbf{R}$. 
\end{definition}

It is clear that  $\angle\alpha$ is {$\mathbf{PR}$-$\mathbf{free}$} means $\angle \alpha$ is indecomposable with respect to $\PP\cup\RR$.  Let $\gamma_i, \gamma_j\in\mathbf{R}$ share a common endpoint $p$. We denote by
\begin{itemize}
       \item $\ha_{p,\textbf{PR-free}}(\gamma_i,\gamma_j)$ the set of all $\mathbf{PR}$-$\textbf{free}$ negatively oriented angles from $\gamma_i$ to $\gamma_j$ at $p$;
    \item $\ha_{\textbf{PR-free}}(\gamma_i,\gamma_j)$ the union of all $\ha_{p,\textbf{PR-free}}(\gamma_i,\gamma_j)$ over common endpoints $p$ of $\gamma_i$ and $\gamma_j$.
\end{itemize}

\subsection{Irreducible morphisms and $\textbf{PR-free}$ negatively oriented angles}

Let $\mathbf{R}=\{\gamma_1,\dots,\gamma_n\}$ be a faithful dissection on $(\SS,\MM,\PP)$. 
So the $A_{\PP}$-module 
\[
M(\mathbf{R})=\bigoplus_{i=1}^{n} M(\gamma_i)
\]
is a  tilting $A_{\PP}$-module.
Denote by $\End_{A_{\PP}} M(\mathbf{R})$ the endomorphism algebra of $M(\mathbf{R})$, and let $\xi_i$ denote the identity map of $M(\gamma_i)$. 
Set 
\[
\mathscr{B}=\{\xi_i \mid 1 \le i \le n\}\cup\{f_{\angle\alpha} \mid \angle\alpha\in\ha_{p, \mathbf{P}-\mathbf{free}}(\gamma_i,\gamma_j),\ 1\le i,j\le n\}.
\]
By  Theorem~\ref{t:basis of rad fromed by angles}, $\mathscr{B}$ is a basis of $\End_{A_{\PP}} M(\mathbf{R})$. 
Let 
\[
\mathscr{B}'=\{f_{\angle\alpha} \mid \angle\alpha\in\ha_{p, \mathbf{PR}-\mathbf{free}}(\gamma_i,\gamma_j),\ 1\le i,j\le n\}.
\]
By Lemma~\ref{l:f_alpha_3 can be factored} and  Theorem~\ref{t:basis of rad fromed by angles}, the algebra $\End_{A_{\PP}} M(\mathbf{R})$ is generated by $\{\xi_i \mid 1\le i\le n\}\cup \mathscr{B}'$, where $\{\xi_i \mid 1\le i\le n\}$ forms a complete set of primitive orthogonal idempotents. 

Since $f_{\angle\alpha}$ is uniquely determined by $\angle\alpha$, we identify $f_{\angle\alpha}$ with $\angle\alpha$. 
 Consequently, $$\End_{A_{\PP}} M(\mathbf{R}) \cong K\widetilde{Q}_{\mathbf{R}}/\langle \widetilde{I}_{\mathbf{R}}\rangle,$$ 
where the quiver $\widetilde{Q}_{\mathbf{R}}$ is defined as follows. The vertex set is $(\widetilde{Q}_{\mathbf{R}})_0=\{1,2,\dots,n\}$, corresponding to the arcs in $\mathbf{R}$.
The arrow set $(\widetilde{Q}_{\mathbf{R}})_1$ is given by $\mathscr{B}'$: there is an arrow $\alpha$ from $j\to i$ if and only if there exists an angle $\angle\alpha\in\angle_{p,\textbf{PR-free}}^{-}(\gamma_i,\gamma_j)$ for some common endpoint $p$.

Let $w = \alpha_1 \cdots \alpha_s$ be a path in $\widetilde{Q}_{\mathbf{R}}$ of length $s \ge 1$. 
We define the corresponding endomorphism $f_w \in \End_{A_{\PP}} M(\mathbf{R})$ by 
\[
f_w := f_{\alpha_1} \cdots f_{\alpha_s} = f_{\angle\alpha_1} \cdots f_{\angle\alpha_s}.
\]
(For a path $w$ of length $0$ at vertex $i$, we set $f_w := \xi_i$.) 
This definition naturally extends the assignment on arrows to paths. 
Consequently, the $K$-linear map sending each path $w$ in the path algebra $K\widetilde{Q}_{\mathbf{R}}$ to $f_w$ induces the algebra isomorphism $\End_{A_{\PP}} M(\mathbf{R}) \cong K\widetilde{Q}_{\mathbf{R}}/\langle \widetilde{I}_{\mathbf{R}}\rangle$.

We now prove that $(\widetilde{Q}_{\mathbf{R}},\widetilde{I}_{\mathbf{R}})$ is a gentle pair. For each loop $\epsilon\in (\widetilde{Q}_{\mathbf{R}})_1$, Lemma~\ref{l:relation for gamma_a is a loop} gives $\epsilon^2\in \widetilde{I}_{\mathbf{R}}$.
 For arrow $\alpha,\beta\in (\widetilde{Q}_{\mathbf{R}})_1$ satisfying 
any of conditions of  Lemma \ref{l:p_alpha ne p_beta}, we have $\alpha\beta\in  \widetilde{I}_{\mathbf{R}}$. Analogously to the definition of a tiling algebra, and using  Lemma \ref{l:f_alpha_3 can be factored}, \ref{l:relation for gamma_a is a loop}, \ref{l:p_alpha ne p_beta} and \ref{l:conditions for if and only if p_alphap_beta neq 0}, we obtain the following result.
\begin{lem}\label{l: Q^R satisfy g1-g3}
   Let $v\in (\widetilde{Q}_{\mathbf{R}})_0$.
   \begin{itemize}
       \item [(1)]  There are at most two incoming arrows and two outgoing arrows at $v$.
       \item [(2)]Let $\beta_1,\beta_2$ be two incoming arrows at $v$, and let $\alpha$ be an outgoing arrow at $v$ in $(\widetilde{Q}_{\mathbf{R}})_1$. Then either $\beta_1\alpha\in \widetilde{I}_{\mathbf{R}}$ and $\beta_2\alpha\notin \widetilde{I}_{\mathbf{R}}$ or $\beta_1\alpha\notin \widetilde{I}_{\mathbf{R}}$ and $\beta_2\alpha\in \widetilde{I}_{\mathbf{R}}$.
       \item [(3)]Let $\beta$ be an incoming arrow at $v$, and let $\alpha_1,\alpha_2$ be two outgoing arrows at $v$  in $(\widetilde{Q}_{\mathbf{R}})_1$. Then either $\beta\alpha_1\in \widetilde{I}_{\mathbf{R}}$ and $\beta\alpha_2\notin \widetilde{I}_{\mathbf{R}}$ or $\beta\alpha_1\notin \widetilde{I}_{\mathbf{R}}$ and $\beta\alpha_2\in \widetilde{I}_{\mathbf{R}}.$
   \end{itemize}
   \end{lem}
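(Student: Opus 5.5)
Arrows of $\widetilde{Q}_{\RR}$ at a vertex $v$ (corresponding to $\gamma_v\in\RR$) are indexed by $\PP\RR$-$\mathbf{free}$ negatively oriented angles having $\gamma_v$ as a side. Every such angle sits at an endpoint of $\gamma_v$, and it is the unique indecomposable $(\PP\cup\RR)$-angle on one side of $\gamma_v$ in the complete fan at that endpoint. The plan is to exploit this local picture: $\gamma_v$ has two endpoints (possibly equal, if $\gamma_v$ is a loop). At each endpoint, $\gamma_v$ has at most one neighbour in the anticlockwise direction and at most one in the clockwise direction in the $(\PP\cup\RR)$-fan. An angle from $\gamma_i$ to $\gamma_j$ that is negatively oriented gives an arrow $j\to i$, so the angle counts as incoming at $v$ if $\gamma_v$ is its start and outgoing at $v$ if $\gamma_v$ is its end.

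\textbf{Part (1).} At each endpoint at most one arrow goes out of $v$ and at most one comes in. This requires that the neighbouring arc in the fan lies in $\RR$ rather than $\PP$; otherwise there is no arrow. Hence there are at most two incoming and two outgoing arrows in total. If $\gamma_v$ is a loop, the two ends of $\gamma_v$ at the same marked point give two positions in the single fan, and the same count works.

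\textbf{Parts (2) and (3).} Fix the outgoing arrow $\alpha$ at $v$ (part (2)), say arising at endpoint $p$. The two incoming arrows $\beta_1,\beta_2$ arise at the two ends of $\gamma_v$. Exactly one of them, say $\beta_1$, arises at the same end of $\gamma_v$ at $p$ as $\alpha$. Then $\beta_1$ and $\alpha$ are adjacent, with $\alpha$ on the clockwise side of $\beta_1$. The composite $f_{\angle\beta_1}f_{\angle\alpha}$ corresponds, under the identification with $f_{\angle\alpha}f_{\angle\beta_1}$ in the paper's composition convention, to the sum angle. The sum of $\PP\RR$-$\mathbf{free}$ angles is $\PP$-$\mathbf{free}$, since its only interior arc $\gamma_v$ lies in $\RR$. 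By Lemma~\ref{l:f_alpha_3 can be factored} together with Theorem~\ref{t:basis of rad fromed by angles}, this composite is nonzero, so $\beta_1\alpha\notin\widetilde I_{\RR}$.

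For the other arrow $\beta_2$, there are two possibilities:
\begin{itemize}
\item if $\beta_2$ lives at a different marked point, Lemma~\ref{l:p_alpha ne p_beta}(1) gives $\beta_2\alpha\in\widetilde I_{\RR}$;
\item if it lives at the same marked point, then $\gamma_v$ is a loop and the configuration is exactly that of Figure~\ref{f:gamma_b is a loop}, so Lemma~\ref{l:p_alpha ne p_beta}(2) gives zero.
\end{itemize}
Lemma~\ref{l:conditions for if and only if p_alphap_beta neq 0} packages both directions: the composite is nonzero if and only if the two angles share a vertex and are adjacent with the correct clockwise position. Part (3) is symmetric, with the roles of incoming and outgoing swapped.

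\textbf{Main obstacle.} The delicate step is the loop case. There one must check that among the two incoming arrows exactly one is adjacent to $\alpha$ in the sense of Lemma~\ref{l:conditions for if and only if p_alphap_beta neq 0}, including the degenerate situation where an arrow is a loop $\epsilon$ at $v$ itself. When $\epsilon$ comes from the $\PP\RR$-$\mathbf{free}$ angle from $\gamma_v$ to itself, Lemma~\ref{l:relation for gamma_a is a loop} forces $\epsilon^2=0$. The adjacent choice must then be the other arrow, so that exactly one of $\epsilon\alpha$, $\beta\alpha$ survives. I would handle this by the same case split as in the proof of Lemma~\ref{l:conditions for if and only if p_alphap_beta neq 0}, namely whether the arrows involved are sub-angles of the loop angle at $p$.
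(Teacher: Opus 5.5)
Your proposal is correct and follows the paper's route. The paper also derives this lemma from the local structure of the $(\PP\cup\RR)$-fans at the two ends of $\gamma_v$, as for tiling algebras, together with Lemmas~\ref{l:f_alpha_3 can be factored}, \ref{l:relation for gamma_a is a loop}, \ref{l:p_alpha ne p_beta} and \ref{l:conditions for if and only if p_alphap_beta neq 0}. Your key step, that the composite with $\alpha$ survives exactly for the incoming arrow at the same end of $\gamma_v$ as $\alpha$, is what Lemma~\ref{l:conditions for if and only if p_alphap_beta neq 0} provides. One slip: it is $\angle\beta_1$ that lies on the clockwise side of $\angle\alpha$, not the reverse. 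So the path $\beta_1\alpha$ maps directly to $f_{\angle\beta_1}f_{\angle\alpha}=f_{\angle\alpha+\angle\beta_1}$ by Lemma~\ref{l:f_alpha_3 can be factored}, and no reordering of the composite is needed.
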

   In the following, we want to prove that  $$\widetilde{I}_{\mathbf{R}}=\{\epsilon^2\mid \epsilon \text{ is a loop in } (\widetilde{Q}_{\mathbf{R}})_1\}\cup\{\alpha\beta\mid \alpha,\beta\in (\widetilde{Q}_{\mathbf{R}})_1 \text{ satisfying 
any of conditions of  Lemma } \ref{l:p_alpha ne p_beta}\}.$$
By Lemma~\ref{l:fp=0=>fafb=0}, we obtain the following.  
\begin{lem}\label{l:p=0=> ab=0}
Let $p=\alpha_1\cdots\alpha_s$ be a path in $\widetilde{Q}_{\mathbf{R}}$ such that $p\in \widetilde{I}_{\mathbf{R}}$. Then there exists $1\leq i\leq s-1$ such that $\alpha_i\alpha_{i+1}\in \widetilde{I}_{\mathbf{R}}$.
\end{lem}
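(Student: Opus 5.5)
The plan is to translate the statement into the language of morphisms and apply Lemma~\ref{l:fp=0=>fafb=0} directly. Let $p=\alpha_1\cdots\alpha_s$ be a path in $\widetilde{Q}_{\RR}$ lying in $\widetilde{I}_{\RR}$. Paths of length $0$ are never zero, and for $s=1$ an arrow is not in $\widetilde{I}_{\RR}$: by Theorem~\ref{t:basis of rad fromed by angles}, each $f_{\angle\alpha}$ is a basis element and hence nonzero. So we may assume $s\ge 2$, which is exactly the range in which Lemma~\ref{l:fp=0=>fafb=0} applies.

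Under the identification $f_{\alpha}=f_{\angle\alpha}$, each arrow $\alpha_i$ corresponds to a $\PP\RR$-free negatively oriented angle $\angle\alpha_i$. Such an angle is in particular $\PP$-free, since a fan with no interior arcs of $\PP\cup\RR$ has none from $\PP$. Concatenation of $\alpha_1\cdots\alpha_s$ means the target arc of one angle is the source arc of the next, taken in the order prescribed by the convention that an arrow $j\to i$ corresponds to an angle from $\gamma_i$ to $\gamma_j$. Reindexing the arcs as $\gamma_{1}',\dots,\gamma_{s+1}'\in\RR$ with $\angle\alpha_i\in\apfree(\gamma'_{i},\gamma'_{i+1})$ (or the reversed order, depending on the orientation convention), we get that $p\in\widetilde{I}_{\RR}$ means exactly that $f_p$, the composite of the $f_{\angle\alpha_i}$ in the appropriate order, vanishes in $\End_{A_\PP}M(\RR)$. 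This uses the isomorphism $K\widetilde{Q}_{\RR}/\langle\widetilde{I}_{\RR}\rangle\cong\End_{A_\PP}M(\RR)$: $\widetilde{I}_{\RR}$ is by definition the kernel of $w\mapsto f_w$, so a path lies in it iff its image is zero.

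Lemma~\ref{l:fp=0=>fafb=0} then yields an index $t$ with $f_{\angle\alpha_{t+1}}f_{\angle\alpha_t}=0$. Translating back, this says the length-two subpath $\alpha_t\alpha_{t+1}$ (or $\alpha_{t+1}\alpha_t$ under the reversed reading, which is again a consecutive pair of the path) maps to zero, hence lies in $\widetilde{I}_{\RR}$.

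The only real obstacle is bookkeeping: we must check that the left-to-right path composition convention and the direction of arrows ($j\to i$ for angles from $\gamma_i$ to $\gamma_j$) match the composition order $f_{\angle\alpha_s}\cdots f_{\angle\alpha_1}$ in Lemma~\ref{l:fp=0=>fafb=0}. Whichever way they match, the conclusion concerns a consecutive pair in the path, so the statement follows in either case.
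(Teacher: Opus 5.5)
Your proposal is correct and is essentially the paper's argument: the paper obtains this lemma in one line from Lemma~\ref{l:fp=0=>fafb=0}, using that $\langle\widetilde{I}_{\mathbf{R}}\rangle$ is the kernel of $w\mapsto f_w$ and that $\mathbf{PR}$-free angles are $\mathbf{P}$-free. Your extra bookkeeping fills in details the paper leaves implicit: you rule out $s\le 1$ because each $f_{\angle\alpha}$ is a nonzero basis element, and you check that the reversed composition order still yields a consecutive pair.
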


Moreover, we have the following stronger statement.
\begin{lem}\label{l: only zero relation}
Let $c,d\in (\widetilde{Q}_{\mathbf{R}})_0$, and let $p_1,\cdots,p_l$ ($l\ge 2$) be distinct paths in $\widetilde{Q}_{\mathbf{R}}$ from $d$ to $c$ such that $$\lambda_1{p_1}+\cdots+\lambda_l{p_l}\in \widetilde{I}_{\mathbf{R}},$$where $0\neq \lambda_i\in K$ for each $1\leq i\leq l$. Then $p_i\in \widetilde{I}_{\mathbf{R}}$ for every $1\leq i\leq l$.
\end{lem}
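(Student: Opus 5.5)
The approach is to push each path $p_i$ forward to $f_{p_i}\in\End_{A_{\PP}}M(\RR)$ and use that the nonzero ones are \emph{distinct elements of the basis} $\mathscr{B}$. Membership in $\widetilde{I}_{\RR}$ means exactly that the image in $\End_{A_{\PP}}M(\RR)$ vanishes, since $\widetilde{I}_{\RR}$ is the kernel of $w\mapsto f_w$. The hypothesis therefore says $\sum_i\lambda_i f_{p_i}=0$.

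For each path $p_i=\alpha_{i,1}\cdots\alpha_{i,s_i}$ from $d$ to $c$, we have two cases. If $f_{p_i}\neq 0$, then by Lemma~\ref{l:fp=0=>fafb=0} and Lemma~\ref{l:conditions for if and only if p_alphap_beta neq 0}, all consecutive angles share the same marked point and are adjacent in the clockwise direction. Iterating Lemma~\ref{l:f_alpha_3 can be factored} then gives $f_{p_i}=f_{\angle\theta_i}$ with $\angle\theta_i=\angle\alpha_{i,1}+\cdots+\angle\alpha_{i,s_i}\in\apfree(\gamma_c,\gamma_d)$ (or with the endpoints reversed, following the paper's arrow convention). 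By Theorem~\ref{t:basis of rad fromed by angles}, $f_{\angle\theta_i}$ is then a basis element of $\mathbf{rad}(M(\gamma_c),M(\gamma_d))$. A path of length $0$ only occurs when $c=d$, and its image $\xi_c$ is linearly independent from the radical basis anyway.

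The key step is to show that the map $i\mapsto\angle\theta_i$ is injective on the set of $i$ with $f_{p_i}\neq 0$. The fan of $\angle\theta_i$ with respect to $\PP\cup\RR$ is a fan at a point $p$. Since all the $\angle\alpha_{i,k}$ are $\PP\RR$-free, they are exactly the indecomposable $(\PP\cup\RR)$-angles making up this fan, listed in order. So $\angle\theta_i$ determines the sequence of arrows, and hence the path $p_i$. If $\angle\theta_i=\angle\theta_j$, then $p_i=p_j$, so $i=j$ because the paths are distinct. By Lemma~\ref{l:mor determined by angle}, the morphisms $f_{p_i}$ with $f_{p_i}\ne0$ are therefore pairwise distinct elements of the basis $\mathscr{B}$.

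Linear independence of $\mathscr{B}$ now forces $\lambda_i=0$ for every $i$ with $f_{p_i}\ne0$. This contradicts $\lambda_i\neq0$, so $f_{p_i}=0$, that is $p_i\in\widetilde{I}_{\RR}$, for all $i$.

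The main obstacle is the injectivity step when $\gamma_c$ or $\gamma_d$ is a loop, or when $c=d$. There, different paths could a priori produce fans at the two ends of the same loop, or wrap around a marked point. I would handle this with the loop analysis from Lemma~\ref{l:conditions for if and only if p_alphap_beta neq 0} and Lemma~\ref{l:relation for gamma_a is a loop}: any path passing twice through a loop angle already vanishes. So a nonzero path is a fan strictly contained in one angular sector at $p$, and the decomposition of that fan into indecomposable pieces is unique.
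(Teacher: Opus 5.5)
Your argument is correct and is essentially the paper's proof. Both proofs rewrite each nonzero path as $f_{p_i}=f_{\angle\alpha_{i,1}+\cdots+\angle\alpha_{i,s_i}}$ using Lemmas~\ref{l:conditions for if and only if p_alphap_beta neq 0} and~\ref{l:f_alpha_3 can be factored}, and then conclude from the linear independence of the angle basis in Theorem~\ref{t:basis of rad fromed by angles}. Your explicit injectivity step, that the summed fan determines its decomposition into $\PP\RR$-free pieces and hence the path, is something the paper uses only implicitly, and it is what actually guarantees that the nonzero $f_{p_i}$ are \emph{distinct} basis vectors. Your extra loop discussion is unnecessary: a fan is a contiguous piece of the linearly ordered complete fan at $p$, so it cannot wrap around. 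Also, the fact that all consecutive composites are nonzero when $f_{p_i}\neq 0$ follows from associativity, not from Lemma~\ref{l:fp=0=>fafb=0}.
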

\begin{proof}



Let $\gamma_{1,1}, \dots, \gamma_{1,s_1+1}, \dots, \gamma_{l,1}, \dots, \gamma_{l,s_l+1} \in \mathbf{R}$ be arcs such that $\gamma_{1,1}=\gamma_{2,1}=\dots= \gamma_{l,1}=\gamma_c$ and $\gamma_{1,s_1+1}=\gamma_{2,s_2+1}=\dots=\gamma_{l,s_l+1}=\gamma_d$. Since each $p_i$ is a path from $d$ to $c$ in $\widetilde{Q}_{\mathbf{R}}$, we can then express the path $p_i$ as the composition $$p_i = \alpha_{i,s_i} \alpha_{i,s_i-1} \cdots \alpha_{i,1},$$ for $\alpha_{i,j}\in (\widetilde{Q}_{\RR})_1$ with $1\le j\le s_i$.
In particular, there exists a fan  whose start arc is $\gamma_d$ and end arc is $\gamma_c$. For each $1 \le i \le l$ and $1 \le j \le s_i$, let $\angle\alpha_{i,j} \in \ha_{\textbf{PR-free}}(\gamma_{i,j}, \gamma_{i,j+1})$.

If $p_i\notin \widetilde{I}_{\mathbf{R}}$, 
$$f_{p_i}=f_{\alpha_{i,s_i}}\cdots f_{\alpha_{i1}}=f_{\angle\alpha_{i,s_i}}\cdots f_{\angle\alpha_{i1}}\neq 0.$$ Then $f_{p_i}\in\Hom_{A_{\PP}}(M(\gamma_c),M(\gamma_d))$.
By Lemma~\ref{l:conditions for if and only if p_alphap_beta neq 0},  $\angle\alpha_{ij}$ and $\angle\alpha_{i,j+1}$ are adjacent angles with $\angle\alpha_{i,j+1}$ is on the clockwise side of $\angle\alpha_{ij}$ for each $1\leq j\leq s_i-1$ ({\em cf.} Figure~\ref{f:puncture}). Denote their common endpoint by $v_{i}$, denote $\angle\alpha_i=\angle\alpha_{i,1}+\cdots+\angle\alpha_{i,s_i}$.
By Lemma~\ref{l:f_alpha_3 can be factored}, $$f_{p_i}=f_{\angle\alpha_{i,s_i}}\cdots f_{\angle\alpha_{i1}}=f_{\angle\alpha_{i,1}+\cdots+\angle\alpha_{i,s_i}}=f_{\angle\alpha_i}.$$  Denote by $\mathcal{B}'$    the  basis of $\mathbf{rad}(M(\gamma_c),M(\gamma_d))$. Then by Theorem~\ref{t:basis of rad fromed by angles}, $f_{p_i}=f_{\angle \alpha_i}\in \mathcal{B}'$. 

 \begin{figure}[htpb]\centering
	\begin{tikzpicture}[scale=0.8]
    \draw[thick,fill=black!20] (7,-4.17)arc (70:110:3);
    \fill(6,-4) circle(1.5pt);
   \draw[red,thick] (6,-2)--(6,-4);
   \node at(6,-4)[below]{$v_i$};
    \draw[red,thick] (6,-4)--(10.8,-2);
    \draw[red,thick] (6,-4)--(8.8,-2);
      \draw[red,thick] (6,-4)--(1.2,-2);
    \draw[red,thick] (6,-4)--(3.2,-2);
    \node [red]at (8.5,-2.5)[right]{$\cdots$};
    \node [red]at (3.5,-2.5)[left]{$\cdots$};
    \node [red]at (1.2,-2)[above]{\tiny{$\gamma_{c}$}};
    \node [red]at (10.8,-2)[above]{\tiny{$\gamma_{d}$}};
  \node [red]at (6,-3)[left]{\tiny{$\angle\alpha_{ij}$}};
 \node [red]at (6,-3)[right]{\tiny{$\angle\alpha_{i,j+1}$}};
	\end{tikzpicture}
	\caption{$f_{p_i}=f_{\angle\alpha_i}$ if $p_i\notin \widetilde{I}_{\mathbf{R}}$}
	\label{f:puncture}
\end{figure}
Let $I = \{i \in \{1, \dots, l\} \mid p_i \notin \widetilde{I}_{\mathbf{R}}\}$.
Because  $\lambda_1{p_1}+\cdots+\lambda_l{p_l}\in \widetilde{I}_{\mathbf{R}}$, then we have 
\[\sum_{i \in I} \lambda_i f_{p_i} = \sum\limits_{i=1}^l\lambda_i f_{\angle\alpha_i}=0.\] 
By  Theorem~\ref{t:basis of rad fromed by angles}, we know that $$\{f_{p_i}|i\in I\}=\{f_{\angle \alpha_i}|i\in I\}\subset \mathcal{B}'.$$
Therefore, we have $\lambda_i=0$ for each $i\in I$. By assumption, $\lambda_i\neq 0$ for each $1\leq i\leq l$, then we must have $I=\emptyset$, which means 
$p_i\in \widetilde{I}_{\mathbf{R}}$ for each $1\leq i\leq l$. 
\end{proof}
Therefore, using  Lemma~\ref{l:p=0=> ab=0} and Lemma ~\ref{l: only zero relation}, 
we have $$\widetilde{I}_{\mathbf{R}}=\{\epsilon^2\mid \epsilon \text{ is a loop in } (\widetilde{Q}_{\mathbf{R}})_1\}\cup\{\alpha\beta\mid \alpha,\beta\in (\widetilde{Q}_{\mathbf{R}})_1 \text{ satisfying 
any of conditions of  Lemma } \ref{l:p_alpha ne p_beta}\}.$$
Then associated with Lemma~\ref{l: Q^R satisfy g1-g3},
$(\widetilde{Q}_{\mathbf{R}},\widetilde{I}_{\mathbf{R}})$ is a gentle pair.

\subsection{Tilting endomorphism algebra arising from a faithful dissection}

\begin{definition}\label{d:definition of A_R}
    Let $(\SS,\MM,\PP)$ be a tiling and let $\mathbf{R}$ be a faithful dissection of $(\SS,\MM,\PP)$. Then we can define a new finite-dimensional algebra $B_{\mathbf{R}}=KQ_{\mathbf{R}}/\langle I_{\mathbf{R}}\rangle $,
where:
\begin{itemize}
    \item $(Q_{\mathbf{R}})_0$: The vertex set $({Q}_{\mathbf{R}})_0$ corresponds bijectively to the permissible arcs in  $\mathbf{R}$;
     \item $(Q_{\mathbf{R}})_1$: An arrow $\alpha: j \to i$ belongs to $(Q_{\mathbf{R}})_1$ if and only if there exists an angle $\angle\alpha\in\ha_{\textbf{PR-free}}(\gamma_i,\gamma_j)$;

    \item the set $I_{\mathbf{R}}$ is defined analogously to $I_{\PP}$ (see Section \ref{s:def of tiling}): \begin{itemize}
        \item $\epsilon^2$ for every loop $\epsilon$;
        \item $\alpha\beta$ whenever $p_\alpha\neq p_\beta$, or the endpoints of the arc segment corresponding to $s(\beta)=t(\alpha)$ coincide ({\em cf.} Figure \ref{f:A_R algebra}).
    \end{itemize}
    
  \begin{figure}[h]
\begin{tikzpicture}

\draw[thick,fill=black!20] (2,0.17)arc (250:290:3);
\node at (3,0.2){$p_\alpha=p_\beta$};
\fill(3,0) circle(1.5pt);
\draw[thick,color=red] (3,0) parabola (1,-2);
\draw[thick,color=red,dashed] (1,-2) arc (150:260:1);
\draw[thick,color=cyan,->] (2,-0.5)--(2.5,-1);
\node at (2.7,-1.2){\tiny{$t(\beta)$}};
\node at (1.9,-0.2){\tiny{$s(\beta)$}};
\node at (2,-1) {$\beta$};
\draw[thick,color=red] (3,0) .. controls (2.6,-0.9) and (2.4,-1.1) .. (2,-2.2);
\draw[thick,color=red] (3,0) .. controls (3.4,-0.9) and (3.6,-1.1) .. (4,-2.2);
\draw[thick,color=red] (3,0) parabola (5,-2);
\draw[thick,color=red,dashed] (5,-2) arc (30:-70:1);
\draw[thick,color=cyan,->](3.5,-1)--(4,-0.5);
\draw [blue,thick](1.2,-0.7)..controls (2.5,-1.6) and (3.5,-1.6)..(4.7,-0.8);
\node at (4,-1) {$\alpha$};
\node at (3.3,-1.2){\tiny{$s(\alpha)$}};
\node at (4.1,-0.3){\tiny{$t(\alpha)$}};
\node [blue] at (0.6,-0.7){\tiny{$\af_i\in\PP$}};

\draw[thick,fill=black!20] (9,0.17)arc (250:290:3);
\node at (10,0.2){$p_\alpha=p_\beta$};
\fill(10,0) circle(1.5pt);
\draw[thick,color=red] (10,0) parabola (8,-2);
\draw[thick,color=red,dashed] (9,-2.2) arc (160:220:1);
\draw[thick,color=cyan,->](9,-0.5)--(9.5,-1);
\node at (9,-1) {$\alpha$};
\node at (9.7,-1.2){\tiny{$t(\alpha)$}};
\node at (8.9,-0.2){\tiny{$s(\alpha)$}};
\draw[thick,color=red] (10,0) .. controls (9.6,-0.9) and (9.4,-1.1) .. (9,-2.2);
\draw[thick,color=red] (10,0) .. controls (10.4,-0.9) and (10.6,-1.1) .. (11,-2.2);
\draw[thick,color=red] (10,0) parabola (12,-2);
\draw[thick,color=red,dashed] (11,-2.2) arc (20:-40:1);
\draw[thick,color=cyan,->](10.5,-1)--(11,-0.5);
\node at (11,-1) {$\beta$};
\node at (10.3,-1.2){\tiny{$s(\beta)$}};
\node at (11.1,-0.3){\tiny{$t(\beta)$}};
\draw [blue,thick](8.2,-0.7)..controls (9.5,-1.6) and (10.5,-1.6)..(11.7,-0.8);
\node [blue] at (7.6,-0.7){\tiny{$\af_i\in\PP$}};
\end{tikzpicture}
\caption{Case $\alpha\beta=0$ when $p_\alpha=p_\beta$}\label{f:A_R algebra}
\end{figure}
\end{itemize}
\end{definition}

\begin{theorem}\label{t:iso for end}
    Let $(\SS,\MM,\PP)$ be a tiling, $\mathbf{R}$ be a faithful dissection of $(\SS,\MM,\PP)$. Then $$B_{\RR}\cong \End_{A_{\PP}}M(\mathbf{R}).$$
\end{theorem}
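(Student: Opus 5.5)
The plan is to compare $B_{\RR}=KQ_{\RR}/\langle I_{\RR}\rangle$ with the presentation $\End_{A_{\PP}}M(\RR)\cong K\widetilde{Q}_{\RR}/\langle\widetilde{I}_{\RR}\rangle$ obtained just before Definition~\ref{d:definition of A_R}, and to show that the two bound quivers coincide. The isomorphism is then the $K$-linear map sending the trivial path at $\gamma_i$ to $\xi_i$ and each arrow $\alpha$ to $f_{\angle\alpha}$, extended to paths by $w\mapsto f_w$.

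\textbf{Step 1: the quivers agree.} By construction, $(Q_{\RR})_0=(\widetilde{Q}_{\RR})_0$ is indexed by $\RR$. In both quivers there is an arrow $j\to i$ for each $\angle\alpha\in\ha_{\mathbf{PR}\text{-}\mathbf{free}}(\gamma_i,\gamma_j)$. Distinct angles give distinct morphisms by Lemma~\ref{l:mor determined by angle}, so the arrow sets are in canonical bijection. By Theorem~\ref{t:basis of rad fromed by angles} and Lemma~\ref{l:f_alpha_3 can be factored}, every $f_{\angle\beta}$ with $\angle\beta$ $\PP$-free is a product of $f$'s of $\mathbf{PR}$-free angles. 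To see this, decompose the fan of $\angle\beta$ at its interior $\RR$-arcs; there are no interior $\PP$-arcs. Hence the map $KQ_{\RR}\to\End_{A_{\PP}}M(\RR)$ is surjective.

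\textbf{Step 2: the ideals agree.} It remains to show $I_{\RR}=\widetilde{I}_{\RR}$ as generating sets. The paper has already established
\[
\widetilde{I}_{\RR}=\{\epsilon^2\}\cup\{\alpha\beta\mid \alpha,\beta \text{ satisfy a condition of Lemma~\ref{l:p_alpha ne p_beta}}\},
\]
using Lemmas~\ref{l:p=0=> ab=0} and~\ref{l: only zero relation}. I will therefore match these generators with those of Definition~\ref{d:definition of A_R}, case by case.

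\begin{itemize}
\item Loop squares $\epsilon^2$ lie in $\widetilde{I}_{\RR}$ by Lemma~\ref{l:relation for gamma_a is a loop}.
\item A product $\alpha\beta$ with $p_\alpha\neq p_\beta$ is case (1) of Lemma~\ref{l:p_alpha ne p_beta}.
\item The case $p_\alpha=p_\beta$ with the middle arc a loop, in the configuration of Figure~\ref{f:A_R algebra}, is case (2) of Lemma~\ref{l:p_alpha ne p_beta}, matching Figure~\ref{f:gamma_b is a loop}.
\end{itemize}

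Conversely, suppose $\alpha\beta$ is a composable pair of arrows not of these forms. Then the two $\mathbf{PR}$-free angles share an endpoint and are adjacent with the correct clockwise order. Lemma~\ref{l:conditions for if and only if p_alphap_beta neq 0} then gives $f\neq0$, so $\alpha\beta\notin\widetilde{I}_{\RR}$. Since both ideals are generated by these monomial relations, they coincide.

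\textbf{Step 3: conclusion.} The surjection from Step 1 factors through $B_{\RR}$ with kernel exactly $\langle I_{\RR}\rangle$, giving the isomorphism. As a sanity check, one can compare dimensions: nonzero paths of $B_{\RR}$ correspond to $\PP$-free negative angles plus the idempotents, which is the basis $\mathscr{B}$.

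\textbf{Main obstacle.} I expect the delicate point to be the precise translation between the orientation conventions. The arrow $j\to i$ corresponds to an angle from $\gamma_i$ to $\gamma_j$, and composition is written as $f_{\alpha_1}\cdots f_{\alpha_s}$. The ``adjacent on the clockwise side'' condition of Lemma~\ref{l:conditions for if and only if p_alphap_beta neq 0} must match exactly the non-vanishing composable pairs in Definition~\ref{d:definition of A_R}, especially when the middle arc is a loop and both angles sit at the same endpoint. There I would verify the orientation carefully by going through the subcases of the proof of Lemma~\ref{l:conditions for if and only if p_alphap_beta neq 0}.
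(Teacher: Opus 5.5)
Your proposal is correct and follows the same route as the paper. Both start from the presentation $\End_{A_{\PP}}M(\RR)\cong K\widetilde{Q}_{\RR}/\langle\widetilde{I}_{\RR}\rangle$ established just before the definition of $B_{\RR}$, observe $\widetilde{Q}_{\RR}=Q_{\RR}$ by construction, and match the monomial generators of $\widetilde{I}_{\RR}$ (loop squares and the two cases of Lemma~\ref{l:p_alpha ne p_beta}) with those of $I_{\RR}$. Your extra details, namely surjectivity by splitting a $\PP$-free angle at its interior $\RR$-arcs and the converse check via Lemma~\ref{l:conditions for if and only if p_alphap_beta neq 0}, only make explicit what the paper's short proof draws from those earlier lemmas.
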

\begin{proof}
Suppose that $\End_{A_{\PP}} M(\mathbf{R}) \cong K\widetilde{Q}_{\mathbf{R}}/\langle \widetilde{I}_{\mathbf{R}}\rangle$, by the constructions of $\widetilde{Q}_{\mathbf{R}}$ and $Q_{\RR}$, it is clear that $\widetilde{Q}_{\mathbf{R}}=Q_{\RR}$. By Lemma \ref{l:p_alpha ne p_beta} and the constructions of $\widetilde{I}_{\mathbf{R}}$ and $I_{\RR}$, we have $\widetilde{I}_{\mathbf{R}}={I}_{\mathbf{R}}$. Thus we have
$(\widetilde{Q}_{\mathbf{R}},\widetilde{I}_{\mathbf{R}})=(Q_{\RR},I_{\RR})$, which means $B_{\RR}\cong \End_{A_{\PP}}M(\mathbf{R}).$
\end{proof}

\begin{ex}\label{e:example for A_R}
   Let $(\SS,\MM,\PP)$ be a tiling and $\mathbf{R}$ a faithful dissection as shown in the Figure \ref{f:example for A_R}.
    \begin{figure}[h]
       \begin{minipage}[t]{0.4\textwidth}
       \centering
      \begin{tikzpicture}[scale=0.5]
          	\draw[ thick] (0,0) circle (3);
            \fill(0,-3) circle(2pt);
            \fill(0,3) circle(2pt);
            \fill(-2.2,-2) circle(2pt);
             \fill(-2.2,2) circle(2pt);
            \fill(2.2,-2) circle(2pt);
             \fill(2.2,2) circle(2pt);
            \draw[blue,thick](0,3)to(0,-3);
             \draw[blue,thick](0,3)to(-2.2,-2);
             \draw[blue,thick](0,3)to(2.2,-2);
             \node [blue]at (-0.9,0.5){\tiny{$\af_1$}};
             \node at (-2.2,2) [left] {\tiny{$p_\alpha$}};
             \node at (2.2,2) [right] {\tiny{$p_\beta$}};
             \node [blue]at (-0.2,0){\tiny{$\af_2$}};
             \draw[red,thick, bend left=30,->](-1,2) to(-1.9,1.3);
             \node[red] at(-1,1.5){\tiny{$\angle\alpha$}};
             \node[red] at(0.8,1.5){\tiny{$\angle\beta$}};
              \draw[red,thick, bend left=30,->] (1.9,1.3) to(1,2);
            \draw[red,thick, bend right=-30,->] (-0.6,-1.8)to (0.6,-1.8);
            \node[red] at(0,-1.8)[above]{\tiny{$\angle\delta$}};
             \node [blue]at (0.9,0.5){\tiny{$\af_3$}};
             \draw[red,thick](-2.2,2)to(2.2,2);
             \draw[red,thick](-2.2,2)to(0,-3);
             \draw[red,thick](0,-3)to(2.2,2);
             \node[red]at (-1,2.2){\tiny{$\gamma_1$}};
             \node[red]at (-2.2,1){\tiny{$\gamma_2$}};
             \node[red]at (2,0.6){\tiny{$\gamma_3$}};
      \end{tikzpicture}
  \end{minipage}
  \begin{minipage}[t]{0.15\linewidth}
    \begin{tikzpicture}[xscale=0.6,yscale=0.6]
\draw[red,thick,->] (0,2) to (5,2);
\draw[white,thick,->] (0,-1) to (2,-1);
	\end{tikzpicture}
    \end{minipage}
\begin{minipage}[t]{0.4\textwidth}
\centering
      \begin{tikzpicture}[scale=0.5]
          	\draw[ thick] (0,0) circle (3);
            \fill(0,-3) circle(2pt);
            \fill(0,3) circle(2pt);
            \fill(-2.2,-2) circle(2pt);
             \fill(-2.2,2) circle(2pt);
            \fill(2.2,-2) circle(2pt);
             \fill(2.2,2) circle(2pt);
            \draw[blue,thick](0,3)to(0,-3);
             \draw[blue,thick](0,3)to(-2.2,-2);
             \draw[blue,thick](0,3)to(2.2,-2);
             \node [blue]at (-0.9,0.5){\tiny{$\af_1$}};
             \node at (-2.2,2) [left] {\tiny{$p_\alpha$}};
             \node at (2.2,2) [right] {\tiny{$p_\beta$}};
             \node [blue]at (-0.2,0){\tiny{$\af_2$}};
             \draw[cyan,thick,->, bend right=30](-1.9,1.3) to(-1,2);
             \node[cyan] at(-1,1.5){\tiny{$\alpha$}};
             \node[cyan] at(0.8,1.5){\tiny{$\beta$}};
              \draw[cyan,thick,->, bend right=30](1,2) to(1.9,1.3);
             \node [blue]at (0.9,0.5){\tiny{$\af_3$}};
             \draw[red,thick](-2.2,2)to(2.2,2);
             \draw[red,thick](-2.2,2)to(0,-3);
             \draw[red,thick](0,-3)to(2.2,2);
             \node[red]at (-1,2.2){\tiny{$\gamma_1$}};
             \node[red]at (-2.2,1){\tiny{$\gamma_2$}};
             \node[red]at (2,0.6){\tiny{$\gamma_3$}};
      \end{tikzpicture}
      \end{minipage}
       \caption{Example for $B_{\RR}$}
       \label{f:example for A_R}
   \end{figure}
   
   It is clear that there exist angles $\angle\alpha\in\ha_{p_\alpha,\textbf{PR-free}}(\gamma_1,\gamma_2)$ and $\angle\beta\in\ha_{p_\beta,\textbf{PR-free}}(\gamma_3,\gamma_1)$, which give rise to two arrows $\alpha:2\to1$ and $\beta:1\to3$. Additionally, since $a_2\in\PP$ lies between $\gamma_2$ and $\gamma_3$, there are no $\textbf{PR-free}$ angles between $\gamma_2$ and $\gamma_3$; consequently, there is no arrow between $2$ and $3$. Furthermore, since $p_\alpha\ne p_\beta$, by definition, we have $B_{\RR}=KQ_{\RR}/\langle I_{\RR}\rangle$, where
  \[Q_{\RR}: \xymatrix{2\ar[r]^{\alpha}&1\ar[r]^{\beta}&3}\quad \text{and}\quad I_{\RR}=\{\alpha\beta\}.\]
\end{ex}
\begin{rk}
   If $\RR$ is a partial dissection, \ie $M(\RR)$ is a $\tau$-rigid $A_\PP$-module, the above results still hold.
\end{rk}

\section{More derived equivalent algebras}
Let $(\SS,\MM,\PP)$ be a tiling, $\RR\in \FD(\SS)$ be a faithful dissection on  $(\SS,\MM,\PP)$. Now we know that $B_{\RR}$ is derived equivalent to $A_{\PP}$. In this section, we provide two ways to get more algebras which are derived equivalent to $A_{\PP}$ from $\RR$.
\subsection{Tilting flip}\label{s:tilting flip}
In this subsection, we introduce a particular flip that yields another faithful dissection from a given one.
\begin{definition}[{\cite[Definition 3.7]{HZZ}}]
Let $(\SS,\MM,\PP)$ be a tiling. An arc $\gamma$ is called a {\it generalized permissible arc} if either $\gamma$ is a permissible arc or $\gamma \in \PP$. A  {\it generalized dissection} is a  set $\mathbf{R}$ of maximal  generalized permissible arcs  such that no arcs in  $\mathbf{R}$ have self-intersections and any two arcs in  $\mathbf{R}$ have no intersections with each other.
\end{definition}
\begin{definition}
   Let $\mathbf{R}$ be a generalized dissection. For any $\gamma \in\mathbf{R}$, if there exists a generalized permissible arc $\gamma' \neq \gamma$ such that $\mu_\gamma(\RR) := (\RR \setminus \{\gamma\}) \cup \{\gamma'\}$ is again a generalized dissection, we call $\mu_\gamma(\RR)$ the {\it flip} of $\mathbf{R}$ at $\gamma$. In particular, $\mu_\gamma(\mathbf{R})$ is a {\it tilting flip} if both $\mathbf{R}$ and $\mu_\gamma(\RR)$ are faithful dissections.
\end{definition}

\begin{ex}
Let $(\SS,\MM,\PP)$ be the tiling and $\mathbf{R}$ be the faithful dissection in Example \ref{e:example for A_R}, Figure \ref{f:ex for flip at 2} shows the tilting flip at $\gamma_2$. 
     \begin{figure}[h]
     \begin{minipage}[t]{0.4\linewidth}
\centering
	\begin{tikzpicture}[scale=0.5]
     	\draw[ thick] (0,0) circle (3);
            \fill(0,-3) circle(2pt);
            \fill(0,3) circle(2pt);
            \fill(-2.2,-2) circle(2pt);
             \fill(-2.2,2) circle(2pt);
            \fill(2.2,-2) circle(2pt);
             \fill(2.2,2) circle(2pt);
            \draw[blue,thick](0,3)to(0,-3);
             \draw[blue,thick](0,3)to(-2.2,-2);
             \draw[blue,thick](0,3)to(2.2,-2);
             \node [blue]at (-0.9,0.5){\tiny{$\af_1$}};
             \node [blue]at (-0.2,0){\tiny{$\af_2$}};
             \node [blue]at (0.9,0.5){\tiny{$\af_3$}};
             \draw[red,thick](-2.2,2)to(2.2,2);
             \draw[red,thick](-2.2,2)to(0,-3);
             \draw[red,thick](0,-3)to(2.2,2);
             \node[red]at (-1,2.2){\tiny{$\gamma_1$}};
             \node[red]at (-2.2,1){\tiny{$\gamma_2$}};
             \node[red]at (1.8,0.6){\tiny{$\gamma_3$}};
      \end{tikzpicture}
      \end{minipage}
      \begin{minipage}[t]{0.15\linewidth}
    \begin{tikzpicture}[xscale=0.6,yscale=0.6]
\draw[red,thick,->] (0,2) to (5,2);
\draw[white,thick,->] (0,-1) to (2,-1);
\draw [red,thick] (2.5,2)node[above]{\tiny{$\mu_{\gamma_2}(\RR)$}};
	\end{tikzpicture}
    \end{minipage}
        \begin{minipage}[t]{0.4\linewidth}
    \centering
	\begin{tikzpicture}[scale=0.5]
          	\draw[ thick] (0,0) circle (3);
            \fill(0,-3) circle(2pt);
            \fill(0,3) circle(2pt);
            \fill(-2.2,-2) circle(2pt);
             \fill(-2.2,2) circle(2pt);
             \fill(2.2,-2) circle(2pt);
             \fill(2.2,2) circle(2pt);
             \draw[blue,thick](0,3)to(0,-3);
             \draw[blue,thick](0,3)to(-2.2,-2);
             \draw[blue,thick](0,3)to(2.2,-2);
             \node [blue]at (-0.9,0.5){\tiny{$\af_1$}};
             \node [blue]at (-0.2,0){\tiny{$\af_2$}};
             \node [blue]at (0.9,0.5){\tiny{$\af_3$}};
             \draw[red,thick](-2.2,2)to(2.2,2);
             \draw[red,thick](2.2,2)to(-2.2,-2);
             \draw[red,thick](0,-3)to(2.2,2);
             \node[red]at (-1,2.2){\tiny{$\gamma_1$}};
             \node[red]at (-1.5,-2){\tiny{$\gamma_2$}};
             \node[red]at (1.8,0.6){\tiny{$\gamma_3$}};
      \end{tikzpicture}
      \end{minipage}
       \caption{Example for tilting flip at $\gamma_2$}
      \label{f:ex for flip at 2}
   \end{figure}
\end{ex}
The following result was obtained in \cite{Nie} (in Chinese) via the geometric model of tilting modules. Here we provide a simple alternative proof.
 \begin{theorem}\label{t:tilting flip}
     Let $(\SS, \MM, \PP)$ be a tiling and $\mathbf{R}$ be a faithful dissection. For an arc $\gamma \in\mathbf{R}$,  $M(\mu_\gamma(\RR))$ is a tilting module if and only if $\mathbf{R} \setminus \{\gamma\}$  is a partial faithful dissection.
 \end{theorem}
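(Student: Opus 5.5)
The plan is to reduce everything to Theorem~\ref{t: tilting modules and faithful dissection}. Once $\mu_\gamma(\RR)$ is known to be a dissection, $M(\mu_\gamma(\RR))$ is tilting exactly when $\mu_\gamma(\RR)$ is faithful. So the statement becomes purely combinatorial: for a faithful dissection $\RR$ and $\gamma\in\RR$, the flip $\mu_\gamma(\RR)=(\RR\setminus\{\gamma\})\cup\{\gamma'\}$ is faithful if and only if $\RR\setminus\{\gamma\}$ is faithful. Implicit here is that the flip produces a genuine (not merely generalized) dissection, i.e.\ that $\gamma'$ is permissible rather than an arc of $\PP$. This has to be handled inside the argument, since $M(\mu_\gamma(\RR))$ only makes sense in that case.

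For ``$\Leftarrow$'': assume $\RR\setminus\{\gamma\}\in\FPD(\SS)$. Faithfulness is monotone under enlarging the set of arcs, because the condition only asks, for each indecomposable $\PP$-angle, for \emph{some} arc of the collection cutting it off contractibly. Hence $(\RR\setminus\{\gamma\})\cup\{\gamma'\}$ is faithful, provided $\gamma'$ is permissible. If instead $\gamma'\in\PP$, I would use Lemma~\ref{l:bijection for tau rigid} and the $\tau$-tilting interpretation from \cite{HZZ}: flips of generalized dissections correspond to mutations of support $\tau$-tilting pairs, with $\gamma'\in\PP$ meaning that the new pair has a projective summand in its second component. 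Then $M(\RR\setminus\{\gamma\})$ is a faithful $\tau$-rigid module, hence sincere, so $\Hom(P,M(\RR\setminus\{\gamma\}))\neq 0$ for every indecomposable projective $P$. Therefore the mutation cannot produce a pair with nonzero projective part, so $\gamma'$ is permissible and $\mu_\gamma(\RR)$ is a faithful dissection.

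For ``$\Rightarrow$'': assume $\mu_\gamma(\RR)$ is faithful but $\RR\setminus\{\gamma\}$ is not. Then some indecomposable $\PP$-angle $\angle\alpha$ at a marked point $p$ is cut contractibly only by $\gamma$ among arcs of $\RR$, and only by $\gamma'$ among arcs of $\mu_\gamma(\RR)$. Both $\gamma$ and $\gamma'$ then pass through the local triangle $\triangle$ of $\angle\alpha$ and both lie in the complement of $\RR\setminus\{\gamma\}$. The key geometric step is to show that two permissible arcs each cutting off the same indecomposable $\PP$-angle, and compatible with the same almost complete dissection, must meet or coincide. The segment of each inside the tile near $p$ is the unique (P2)-segment cutting off $\angle\alpha$, and by permissibility it is continued uniquely through the neighbouring tiles (the string is determined by extending along the fan rule). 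I would make this precise via Proposition~\ref{p:dim pro two strings} or Theorem~\ref{t:basis of rad fromed by angles}. Both $M(\gamma)$ and $M(\gamma')$ contain the arrow $\alpha$ in their strings, and one produces nonzero maps between them in both directions that factor through the $\alpha$-substring. Combined with the exchange sequence relating $M(\gamma)$ and $M(\gamma')$ in $\tau$-tilting mutation, this forces $\gamma=\gamma'$, a contradiction.

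Equivalently, and more cleanly, I would argue algebraically. If $M(\RR\setminus\{\gamma\})$ is not faithful, there is a nonzero path $\alpha$ annihilating it, so $M(\RR\setminus\{\gamma\})$ is a $\tau$-rigid module over the proper quotient $A/\langle\alpha\rangle$. By counting $\tau$-tilting complements (Adachi--Iyama--Reiten: an almost complete support $\tau$-tilting pair has exactly two completions), one completion is $\gamma$. The other completion should then correspond either to an arc of $\PP$ or to a non-faithful module, which rules out a second faithful completion. The main obstacle is exactly this step: proving that the unique other complement cannot also realise $\alpha$. I expect to handle it by showing that the two complements are related by an exchange sequence $M(\gamma)\to E\to M(\gamma')$ or its reverse with $E\in\operatorname{add}M(\RR\setminus\{\gamma\})$, and then tracking the action of $\alpha$ along this sequence.
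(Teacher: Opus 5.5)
Your reduction to Theorem~\ref{t: tilting modules and faithful dissection} and your ``$\Leftarrow$'' direction are sound. Geometric faithfulness is monotone in the arc set. A faithful, hence sincere, $M(\overline{\RR})$ with $\overline{\RR}=\RR\setminus\{\gamma\}$ admits no completion of the form $(M(\overline{\RR}),P)$ with $P\neq 0$. So the second completion is $M(\gamma')$ with $\gamma'$ permissible, and $\mu_\gamma(\RR)$ is a faithful dissection. This argument via Adachi--Iyama--Reiten completions differs from the paper's and is valid. It even makes explicit that $\gamma'\notin\PP$, which the paper leaves implicit.

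The gap is in ``$\Rightarrow$'', which is the substantive half, and neither of your sketches closes it.
\begin{itemize}
\item \emph{Your geometric sketch cannot yield a contradiction.} Since $\RR$ is a maximal dissection and $\gamma'$ does not cross $\overline{\RR}$, the exchange partners $\gamma$ and $\gamma'$ always cross. So ``meet or coincide'' is automatic and gives nothing.
\item \emph{An arc cutting off a given indecomposable $\PP$-angle is not determined by that angle.} In the paper's flip example (Figure~\ref{f:ex for flip at 2}), $\gamma_1$ and the new $\gamma_2$ both cut off the angle between $\af_2$ and $\af_3$ at the top marked point. They leave a common endpoint, diverge after crossing $\af_2$, and do not cross.
\item \emph{Nonzero maps in both directions do not force $\gamma=\gamma'$.} You give no argument for this step.
\item \emph{Your algebraic sketch stops exactly at the step that needs proof.} Tracking $\alpha$ along the $\tau$-tilting exchange sequence $X\to U'\to Y\to 0$ does not work, because its left map is not injective in general.
\end{itemize}

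The missing ingredient is classical tilting exchange theory, which is what the paper uses: by Happel--Unger \cite[Proposition 1.2]{HU05}, an almost complete tilting module has two non-isomorphic complements if and only if it is faithful. The easy half suffices here, and it runs as follows.
\begin{itemize}
\item If $M(\RR)$ and $M(\mu_\gamma(\RR))$ are both tilting, there is a non-split exact sequence $0\to X\to E\to Y\to 0$ with $E\in\operatorname{add}M(\overline{\RR})$ and $\{X,Y\}=\{M(\gamma),M(\gamma')\}$.
\item Injectivity of $X\to E$ gives $\operatorname{ann}M(\overline{\RR})\subseteq\operatorname{ann}X$.
\item Hence $\operatorname{ann}M(\overline{\RR})=\operatorname{ann}\bigl(M(\overline{\RR})\oplus X\bigr)=0$, since a tilting module is faithful.
\item You then need the easy converse of Lemma~\ref{l:PFD is faithful}. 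If some indecomposable $\PP$-angle $\angle\alpha$ is cut off by no arc of $\overline{\RR}$, the letter $\alpha$ occurs in no string of $\overline{\RR}$, so $\alpha$ annihilates $M(\overline{\RR})$.
\end{itemize}
With these two steps, your idea of ``tracking $\alpha$'' becomes a proof; without them the forward implication remains unproven.
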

 \begin{proof}
 By Theorem~\ref{t: tilting modules and faithful dissection},  $M(\RR)$ is a tilting $A_\PP$-module. Denote by $\overline{\RR}=\RR\setminus\{\gamma\}$, so $M(\overline{\RR})$ is an almost tilting $A_{\PP}$-module. By \cite[Proposition 1.2 ]{HU05}, $M(\overline{\RR})$ has two complements if and only if $M(\overline{\RR})$ is faithful. By Lemma~\ref{l:PFD is faithful}, $M(\overline{\RR})$ is faithful if and only if $\overline{\RR}$ is a faithful partial dissection on $(\SS, \MM, \PP)$. Then we get the result.
 \end{proof}

\subsection{Tiling for a tilting endmorphism algebra}\label{s:geo model of endo algebra}

Let $\mathbf{R}$ be a faithful dissection of the tiling $(\SS,\MM,\PP)$. In this subsection, we construct a geometric model $(\SS_{\mathbf{R}}, \MM_{\mathbf{R}}, \PP_{\mathbf{R}})$ for $\End_{A_\PP}(M(\mathbf{R}))$. By considering faithful dissections of $(\SS_{\mathbf{R}}, \MM_{\mathbf{R}}, \PP_{\mathbf{R}})$, we can obtain further algebras that are derived equivalent to $A_\PP$.

\subsubsection{Separating arcs at non-$\PP$-$\mathbf{free}$ angles} 
Let $p_\alpha \in \MM$ be a marked point such that there exists an angle $\angle\alpha$ at $p_\alpha$ with both sides in $\mathbf{R}$ and $\angle\alpha$ is not $\PP$-free. Let $\af_1, \dots, \af_s \in \PP$ be the interior arcs of the fan of $\alpha$, ordered so that $\af_{i+1}$ lies on the clockwise side of $\af_i$ for each $1 \leq i \leq s-1$.

Let $\omega_l, \dots, \omega_1, \af_s, \gamma_1, \gamma_2, \dots, \gamma_r \in \PP \cup \mathbf{R}$ be the complete fan at $p_\alpha$, thus $\omega_l, \dots, \omega_1$ lie on the anticlockwise side of $\af_s$ and $\gamma_1, \gamma_2, \dots, \gamma_r$ lie on the clockwise side of $\af_s$. To separate $\{\gamma_1, \dots, \gamma_r\}$ from $\{\omega_1, \dots, \omega_l\}$, we perform the following surgery:

\begin{itemize}
    \item[\textbf{Step 1.}] Let $p \in \MM$ be the marked point immediately following $p_\alpha$ in the clockwise order along $\partial \SS$. Insert a new marked point $v$ on $\partial \SS$ between $p_\alpha$ and $p$, so that the clockwise order becomes $p_\alpha, v, p$. Let $b$ denote the boundary segment between $p_\alpha$ and $v$.
    
    \item[\textbf{Step 2.}] Slide the endpoints of $\gamma_1, \dots, \gamma_r$ from $p_\alpha$ to $v$ along the boundary $\partial \SS$ (\confer Figure~\ref{f:split}). We keep the same notation $\gamma_1, \dots, \gamma_r$ for the resulting arcs. Note that the new $\gamma_1, \dots, \gamma_r$ still lie in $\PP \cup \mathbf{R}$, and all other structures remain unchanged. Consequently, these arcs detach from $p_\alpha$ and meet at the newly created vertex $v$ on the boundary.
\end{itemize}

We iterate this procedure over all marked points (including the newly created ones) and all angles with sides in $\mathbf{R}$ that are not $\PP$-free, until every angle with sides in $\mathbf{R}$ becomes $\PP$-free. Let $\MM_{\mathrm{add}}$ be the set of all newly created marked points obtained through this process, and set $\MM_{\mathrm{new}} = \MM \cup \MM_{\mathrm{add}}$. Correspondingly, we denote the updated collection of arcs by $\RR_{\mathrm{new}}$. 

\begin{figure}[h]
     \centering
     \begin{minipage}{0.4\linewidth}
         \begin{tikzpicture}[scale=0.7)]
    \draw[thick,fill=black!20] (8,-4.17)arc (60:120:4);
    \node at (6,-4){\tiny{$p_{\alpha}$}};
    \fill(6,-3.6) circle(1.5pt);
    \draw[blue,thick] (6,-1)--(6,-3.6);
     \draw[blue,thick] (7.5,-3.9)--(7.5,-1);
    \node [red]at (5,-1)[above]{\tiny{$\omega_1$}};
       \node [red]at (3,-1)[above]{\tiny{$\omega_l$}};
    \node [red]at (8,-1)[above]{\tiny{$\gamma_1$}};
      \node [red]at (10,-1)[above]{\tiny{$\gamma_r$}};
    \node[red] at (4.2,-1) [below]{$\cdots$};
     \node[red] at (8.6,-1) [below]{$\cdots$};
     \node at (7.5,-3.9) [below]{\tiny{$p$}};
      \node at (6,-1) [above]{\tiny{$\af_s$}};
  
    \fill(7.5,-3.9)circle(1.5pt);
     \draw[red,thick] (6,-3.6)--(8,-1);
      \draw[red,thick] (6,-3.6)--(10,-1);
     \draw[red,thick] (6,-3.6)--(3,-1);
      \draw[red,thick] (6,-3.6)--(5,-1);
      
    \end{tikzpicture}
    \end{minipage}%
    \begin{minipage}{0.18\linewidth}
    \centering
    \begin{tikzpicture}
    \draw[red,thick,->] (0,0) -- (1.8,0) node[midway, above] {\tiny{$\text{separating}$}};
        \end{tikzpicture}
    \end{minipage}%
     \begin{minipage}{0.4\linewidth}
    \begin{tikzpicture}[scale=0.7]
    \draw[thick,fill=black!20] (8,-4.17)arc (60:120:4);
    \node at (6,-4){\tiny{$p_{\alpha}$}};
    \fill(6,-3.6) circle(1.5pt);
    \draw[blue,thick] (6,-1)--(6,-3.6);
     \draw[blue,thick] (7.5,-3.9)--(7.5,-1);
    \node [red]at (5,-1)[above]{\tiny{$\omega_1$}};
       \node [red]at (3,-1)[above]{\tiny{$\omega_l$}};
    \node [red]at (8,-1)[above]{\tiny{$\gamma_1$}};
      \node [red]at (10,-1)[above]{\tiny{$\gamma_r$}};
    \node[red] at (4.2,-1) [below]{$\cdots$};
     \node[red] at (8.6,-1) [below]{$\cdots$};
     \node at (7.5,-3.9) [below]{\tiny{$p$}};
      \node at (6,-1) [above]{\tiny{$\af_s$}};
      
    \fill(7.5,-3.9)circle(1.5pt);
   
     \draw[red,thick] (6,-3.6)--(3,-1);
      \draw[red,thick] (6,-3.6)--(5,-1);
 
      \node at(7,-3.75) [below]{\tiny{$v$}};
       \draw[red,thick] (7,-3.8)--(8,-1);
      \draw[red,thick] (7,-3.8)--(10,-1);
    \end{tikzpicture}
    \end{minipage}
     \caption{Separating}
     \label{f:split}
 \end{figure} 
\subsubsection{Merging consecutive boundary segments}

Consider the marked surface $(\SS,\MM_{\mathrm{new}},\RR_{\mathrm{new}})$.
For each basic tile  which has a sequence of consecutive boundary segments
$b_1,b_2,\dots,b_k$ along its boundary for some $k\ge 2$, merge these segments
into a single boundary segment by removing all points in the interior of the
union $b_1\cup b_2\cup\cdots\cup b_k$ (\confer Figure~\ref{f:merging}).
Denote by $\MM_{\mathrm{rem}}$ the set of marked points which have been removed.

\begin{figure}[h]
     \centering
     \begin{minipage}{0.25\linewidth}
         \begin{tikzpicture}[scale=0.7]
    \draw[thick,fill=black!20] (8,-4.17)arc (60:120:4);
   
    \node at (7,-3.6) [below]{\tiny{$b_k$}};
    \node at (5,-3.6) [below]{\tiny{$b_1$}};
     \node at (7.5,-3.9) [below]{\tiny{$p_2$}};
    \node at (4.5,-3.9) [below]{\tiny{$p_1$}};
    \fill(7.5,-3.9)circle(1.5pt);
    \fill(4.5,-3.9)circle(1.5pt);

    \fill(6,-3.6) circle(1.5pt);
    \fill(5.25,-3.7) circle(1.5pt);
 \fill(6.75,-3.7) circle(1.5pt);

    \draw[red,thick] (7.5,-3.9)--(8,-3);
    \node [red] at(8,-3)[above]{$\vdots$};
    \fill (8,-3)circle(1.5pt);
    \fill(4,-3)circle(1.5pt);
    \fill(4,-2)circle(1.5pt);
    \fill(8,-2)circle(1.5pt);
    \fill(5,-1)circle(1.5pt);
    \fill(7,-1)circle(1.5pt);
    \node [red] at(4,-3)[above]{$\vdots$};
    \draw[red,thick] (4.5,-3.9)--(4,-3);
    \draw[red,thick] (4,-2)--(5,-1);
    \draw[red,thick] (8,-2)--(7,-1);
    \draw[red,thick] (5,-1)--(7,-1);
    \end{tikzpicture}
\end{minipage}
     \begin{minipage}{0.25\linewidth}
    \centering
    \begin{tikzpicture}
    \draw[red,thick,->] (-0.8,0) -- (0.8,0) node[midway, above] {\tiny{\text{Merging}}};
        \end{tikzpicture}
    \end{minipage}%
      \begin{minipage}{0.4\linewidth}
         \begin{tikzpicture}[scale=0.7]
    \draw[thick,fill=black!20] (8,-4.17)arc (60:120:4);
  
    \fill(7.5,-3.9)circle(1.5pt);
    \fill(4.5,-3.9)circle(1.5pt);
    \draw[red,thick] (7.5,-3.9)--(8,-3);
    \node [red] at(8,-3)[above]{$\vdots$};
  
     \node at (7.5,-3.9) [below]{\tiny{$p_2$}};
    \node at (4.5,-3.9) [below]{\tiny{$p_1$}};
    \fill (8,-3)circle(1.5pt);
    \fill(4,-3)circle(1.5pt);
    \fill(4,-2)circle(1.5pt);
    \fill(8,-2)circle(1.5pt);
    \fill(5,-1)circle(1.5pt);
    \fill(7,-1)circle(1.5pt);
    \node [red] at(4,-3)[above]{$\vdots$};
    \draw[red,thick] (4.5,-3.9)--(4,-3);
    \draw[red,thick] (4,-2)--(5,-1);
    \draw[red,thick] (8,-2)--(7,-1);
    \draw[red,thick] (5,-1)--(7,-1);
    \end{tikzpicture}
     \end{minipage}
    \caption{Merge consecutive boundary segments}
    \label{f:merging}
 \end{figure}


\subsubsection{Construction new tiling for  $\End_{A_\PP}(M(\mathbf{R}))$}
\begin{construction}
Let $(\SS,\MM,\PP)$ be a tiling and $\mathbf{R}$ be a faithful dissection on $(\SS,\MM,\PP)$. 
We associate to the endomorphism algebra $\End_{A_\PP} M(\mathbf{R})$ a marked surface $(\SS_{\mathbf{R}},\MM_{\mathbf{R}},\PP_{\mathbf{R}})$ defined as follows:
\begin{itemize}
    \item $\SS_{\mathbf{R}} = \SS$;
    \item $\MM_{\mathbf{R}} = \MM_{\text{new}} \setminus \MM_{\text{rem}}$;
    \item $\PP_{\mathbf{R}} = \mathbf{R}_{\text{new}}$.
\end{itemize}
\end{construction}

\begin{proposition}\label{p:A_R isomorphic to End}
The triple $(\SS_{\mathbf{R}},\MM_{\mathbf{R}},\PP_{\mathbf{R}})$ is a tiling. Moreover, there is an algebra isomorphism
\[
A_{\PP_{\mathbf{R}}} \cong \End_{A_\PP}(M(\mathbf{R}))\cong B_{\RR}.
\]
\end{proposition}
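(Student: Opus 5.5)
The plan has two parts: first check that $(\SS_{\RR},\MM_{\RR},\PP_{\RR})$ is a tiling, then compare the quiver with relations of $A_{\PP_{\RR}}$ with that of $B_{\RR}$. Once these agree, Theorem~\ref{t:iso for end} gives the second isomorphism $B_{\RR}\cong\End_{A_\PP}M(\RR)$.

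\textbf{Step 1: the triple is a tiling.} After separating, $\RR_{\mathrm{new}}$ is still a collection of pairwise non-crossing, non-self-crossing arcs on $(\SS,\MM_{\mathrm{new}})$, because each slide moves endpoints along a boundary segment that no other arc of $\PP\cup\RR$ meets. Merging deletes only marked points on boundary runs of a tile that no arc of $\RR_{\mathrm{new}}$ touches, so the arcs survive.

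We then classify the tiles cut out by $\RR_{\mathrm{new}}$. Since $\RR$ is a maximal dissection, each region of $\SS\setminus\RR$ is a polygon. By maximality, such a polygon contains at most one unmarked boundary component and no punctures, and its boundary runs consist of boundary segments of $\SS$ together with segments created by separating. After merging, each run is a single boundary segment.

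The step I expect to be the main obstacle is showing that each tile has at most one boundary segment, except for the three-gons of type (III) and the once-punctured digons or monogons of types (I)--(II). I would argue by contradiction using faithfulness. Suppose a region had two boundary runs. Then some $\PP$-arc would cross between them, and the indecomposable $\PP$-angle it produces would not be cut off contractibly by any arc of $\RR$, since a maximal dissection would otherwise add an arc. This contradicts the faithfulness of $\RR$.

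\textbf{Step 2: identify the quiver and relations.} Vertices of $Q_{\PP_{\RR}}$ are the arcs of $\RR_{\mathrm{new}}$, which are in bijection with $\RR$. An arrow of $Q_{\PP_{\RR}}$ is an indecomposable $\RR_{\mathrm{new}}$-angle at a point of $\MM_{\RR}$. The separation step splits exactly the angles whose fans contain an arc of $\PP$. Hence the surviving indecomposable angles are exactly the $\PP\RR$-free negatively oriented angles, and the orientation convention $j\to i$ matches that of $Q_{\RR}$. Merging removes no angles between arcs of $\RR$, because the removed points are endpoints of no arc.

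For relations, two arrows compose nonzero in $A_{\PP_{\RR}}$ iff they lie at the same marked point, are consecutive, and the middle arc is not a loop at that point. I would check the three cases in turn:
\begin{itemize}
\item $p_\alpha\neq p_\beta$ is preserved by the construction, since separation only creates new points and never identifies existing ones.
\item Two consecutive $\PP\RR$-free angles at a common point of $\MM$ remain at a common point of $\MM_{\RR}$, because no $\PP$-arc lies between them.
\item For the loop condition, I would compare with Figure~\ref{f:A_R algebra}. When the middle arc of $\RR$ is a loop whose two ends are separated by a $\PP$-arc, as in Lemma~\ref{l:p_alpha ne p_beta}(2), the separation detaches one endpoint, so in $\RR_{\mathrm{new}}$ it becomes a non-loop whose two endpoints are distinct points. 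The relation then falls under the case $p_\alpha\neq p_\beta$.
\end{itemize}
Together these show $(Q_{\PP_{\RR}},I_{\PP_{\RR}})=(Q_{\RR},I_{\RR})$, which gives $A_{\PP_{\RR}}\cong B_{\RR}$.
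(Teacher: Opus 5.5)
Your Step~2, together with the final appeal to Theorem~\ref{t:iso for end}, is the whole of the paper's one-line proof: read the quiver and relations of $A_{\PP_{\RR}}$ off the construction and compare them with Definition~\ref{d:definition of A_R}. The problem is Step~1. The paper treats the tiling property as evident, but you try to prove it, and two of your steps fail.

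First, ``merging deletes only points at which no arc ends, so the arcs survive'' does not follow, because deleting such points can make an arc homotopic to a boundary segment. Take a region bounded by a single arc of $\RR$ and a run of consecutive boundary segments, such as a type~(III) tile. Merging collapses it to a digon, and the arc becomes homotopic to the merged segment. This already happens in Example~\ref{e:example for A_R}. The three marked points at which no arc of $\RR$ ends each lie in such a region, cut off by $\gamma_1$, $\gamma_2$, $\gamma_3$ respectively. Merging as stated would therefore turn every arc of $\RR$ into a boundary segment. You must restrict the merging: a tile whose only arc edge is a single arc of $\RR$ has to keep exactly two boundary segments. As written, your list of admissible tiles includes type~(III) tiles that your own merging step destroys.

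Second, your ``main obstacle'' argument does not establish the contradiction it claims. Nothing forces a $\PP$-arc to pass between two boundary runs, and you do not say which indecomposable $\PP$-angle you mean. The phrase ``a maximal dissection would otherwise add an arc'' mixes up two different properties: maximality (no further permissible arc can be added) and faithfulness (every indecomposable $\PP$-angle is cut off by some arc of $\RR$). What actually needs proof is that, after separation, every region of $\SS\setminus\RR$ has one of the shapes (I)--(V). Each non-$\PP$-free corner between two arcs of $\RR$ becomes an isolated new boundary segment of its region. So you must show:
\begin{itemize}
\item no region has two such corners;
\item no region has one such corner together with a boundary segment of $\SS$ or an unmarked boundary component;
\item a region containing an unmarked boundary component is a monogon or digon.
\end{itemize}
Types (I)--(II) contain an unmarked boundary component, not a puncture; there are no punctures in this setting. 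Your preliminary claim that the regions are polygons with at most one unmarked boundary component is also only asserted. The usual maximality argument does not apply, because $\RR$ is maximal only among permissible arcs.

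Finally, there is a repairable slip in Step~2. In Lemma~\ref{l:p_alpha ne p_beta}(2) (Figure~\ref{f:A_R algebra}), the arc $\af_i$ crosses the arcs near $p$; it need not separate the two ends of $\gamma_b$ in the fan at $p$. In that case $\gamma_b$ remains a loop in $\RR_{\mathrm{new}}$, and the relation comes from the loop clause of $I_{\PP_{\RR}}$, not from $p_\alpha\neq p_\beta$. Both sub-cases give the relation. Since separation never identifies endpoints, every loop of $\RR_{\mathrm{new}}$ is a loop of $\RR$, so the relations do match.
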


\begin{proof}
By Definition~\ref{d:definition of A_R} and the definition of tiling algebras, the result follows directly from the construction of $(\SS_{\mathbf{R}},\MM_{\mathbf{R}},\PP_{\mathbf{R}})$.
\end{proof}

\begin{ex}
We employ the example from Section 6 of \cite{HZZ}, modified by replacing the puncture with an unmarked boundary component.
 Let $(\SS,\MM,\PP)$ be a tiling as shown in Figure \ref{f:ex for tiling}.  Then $A_\PP=KQ_\PP/\langle I_\PP\rangle$, for \[Q_\PP=\xymatrix{
				\ar@(lu,ld)[]_{\epsilon_1}1\ar@/^.5pc/[r]^{a}&2\ar@/^.5pc/[l]^{b}\ar[rd]_h&3\ar[l]_c\ar[r]^d&4\ar[r]^e&5\ar@(ru,rd)[]^{\epsilon_5}\\
				&&7&6\ar[u]_f\ar[l]^g
			}\]
         $I_{\PP}=\{\epsilon_1^2,ab,ba,ch,de,\epsilon_5^2\}.$
        
    \begin{figure}[htpb]
	\centering
	\begin{subfigure}[b]{0.48\textwidth}
		\centering
		\begin{tikzpicture}[scale=0.8] 

 \fill (0,-0.5)circle(2pt);
  \fill (0,0.5)circle(2pt);
   \fill (3,0)circle(2pt);
     \fill (0,3)circle(2pt); 
      \draw(3,0)node[right]{\tiny{$p_1$}};
   \draw(0,3)node[above]{\tiny{$p_2$}};
   \draw(-0.3,-0.5)node[below]{\tiny{$p_4$}};
   \draw(0,0.5)node[above]{\tiny{$p_3$}};
   
			\draw[thick] (0,0) circle (3);
			\draw[thick,fill=gray!20] (0,0) circle (.5);
		\draw[thick,fill=gray!20](90:1.5) circle (.2);
\draw[thick,fill=gray!20](-70:1.6) circle (.1);
	\draw[thick,fill=gray!20](-70:2.5) circle (.15);
           	\draw[blue, thick] (0,-0.5) .. controls (-0.2,-1) and (0.2,-2.3)..(0.8,-2)..
            controls (1.1,-1.9) and (1.4,-1)..(0,-0.5);
         	\draw[blue, thick] (0,-0.5) .. controls (-0.3,-1) and (-0.2,-2.7)..(0.7,-2.7).. controls (1.9,-2.7) and (1.9,-1)..(0,-0.5);   
           
			\draw[blue,thick] (0,-.5) ..controls (-2,0) and (-1.5,2.5)..(0,3);
			\draw[blue,thick] (0,-.5)to[bend right=10](3,0);
			\draw[blue, thick] (0,.5) .. controls (0.8,1) and (0.8,2)..(0,2);
			\draw[blue, thick] (0,.5) .. controls (-0.8,1) and (-0.8,2)..(0,2);
			\draw[blue,thick] (0,.5)to[out=20,in=-30,looseness=1.5](0,3);
			\draw[blue,thick] (3,0)to[out=160,in=-10](0,.5);
			\draw[blue] (0,-2.4)node[below]{$2$} (0.5,-2.2)node{$1$} (-1.4,0.5)node{$3$} (1.2,2)node{$4$} (0,2.2)node{$5$} (2,.5)node{$6$}(2,-.6)node{$7$};
			
		\end{tikzpicture}
		\caption{$(\SS,\MM,\PP)$}
		\label{f:ex for tiling}
	\end{subfigure}
	\hfill 
	\begin{subfigure}[b]{0.48\textwidth}
		\centering
		\begin{tikzpicture}[scale=0.8] 
				
 \fill (0,-0.5)circle(2pt);
  \fill (0,0.5)circle(2pt);
   \fill (3,0)circle(2pt);
     \fill (0,3)circle(2pt); 
      \draw(3,0)node[right]{\tiny{$p_1$}};
   \draw(0,3)node[above]{\tiny{$p_2$}};
   \draw(-0.3,-0.5)node[below]{\tiny{$p_4$}};
   \draw(0,0.5)node[above]{\tiny{$p_3$}};
   
			\draw[thick] (0,0) circle (3);
			\draw[thick,fill=gray!20] (0,0) circle (.5);
		\draw[thick,fill=gray!20](90:1.5) circle (.2);
\draw[thick,fill=gray!20](-70:1.6) circle (.1);
	\draw[thick,fill=gray!20](-70:2.5) circle (.15);
           	\draw[blue, thick] (0,-0.5) .. controls (-0.2,-1) and (0.2,-2.3)..(0.8,-2)..
            controls (1.1,-1.9) and (1.4,-1)..(0,-0.5);
         	\draw[blue, thick] (0,-0.5) .. controls (-0.3,-1) and (-0.2,-2.7)..(0.7,-2.7).. controls (1.9,-2.7) and (1.9,-1)..(0,-0.5);   
           
			\draw[blue,thick] (0,-.5) ..controls (-2,0) and (-1.5,2.5)..(0,3);
			\draw[blue,thick] (0,-.5)to[bend right=10](3,0);
			\draw[blue, thick] (0,.5) .. controls (0.8,1) and (0.8,2)..(0,2);
			\draw[blue, thick] (0,.5) .. controls (-0.8,1) and (-0.8,2)..(0,2);
			\draw[blue,thick] (0,.5)to[out=20,in=-30,looseness=1.5](0,3);
			\draw[blue,thick] (3,0)to[out=160,in=-10](0,.5);
 \draw[orange,thick] (3,0).. controls (1.5,-1.4) and (1,-0.9) .. (0.5,-1.2) .. controls (0.1,-1.3) and (0.3,-2) .. (0.9,-1.8)  ..controls(1.2,-1.6)and (1.5,-1.6)..(3,0);

\draw[orange,thick] (3,0).. controls (1.5,-1.2) and (1,-0.9) .. (0.4,-1.05) .. controls (-0.4,-1.3) and (-0.5,-2.9) .. (0.6,-2.8)  ..controls(1.0,-2.8)and (1.67,-2.8)..(3,0);
 \draw[orange,thick] (3,0).. controls (2.2,1.6) and (0,2.9) .. (-0.8,1.8)
                   .. controls (-1.4,0.6) and (-0.9,-0.7) .. (0,-0.7)
                   .. controls (0.7,-0.7) and (1.2,0.4) .. (0,0.5);
                     \draw[orange,thick] (3,0) .. controls(2.2,0.8) and (1.5,1)..(0,1)
                   ..controls(-1,1)and (-1,-0.5)..(0,-0.5);
 \draw [orange,thick](3,0)..controls (2.2,1.4)and (1,1.9)..(-0.1,1.8)
                   ..controls (-0.6,1.8) and (-0.5,1)..(-0,1.2)
                   ..controls (0.2,1.3) and (2,1.6)..(3,0);
    \draw[orange,thick] (3,0).. controls (2,2.0) and (0,2.95) .. (-0.8,2)
          .. controls (-1.9,0.5) and (-0.8,-0.8) .. (-0.2,-0.8)
                   .. controls (0.5,-1) and (0.8,-0.5) .. (0.9,-0.1)
                    .. controls(1,0.3) and (0.6,0.8)..(0,0.8)
                   ..controls(-0.9,0.8)and (-0.9,-0.5)..(0,-0.5);
                 
   \draw[orange,thick] (3,0).. controls (2,2.9) and (0,3) .. (-1.5,2) .. controls (-3,1) and (-1.5,-0.9) .. (-0.2,-1)  ..controls(0,-1)and (1.5,-1)..(3,0);

		\end{tikzpicture}
		\caption{$(\SS,\MM,\PP,\RR)$}
		\label{f:ex for tiling and faithful dissection}
	\end{subfigure}
	\caption{The example for the tiling $(\SS,\MM,\PP)$ with faithful dissection $\RR$.}
   \end{figure}
   \begin{figure}[htpb]
	\centering
	\begin{subfigure}[b]{0.48\textwidth}
		\centering
	\begin{tikzpicture}[scale=0.8]
		\fill (0,-0.5)circle(2pt);
  \fill (0,0.5)circle(2pt);
   \fill (3,0)circle(2pt);
     \fill (0,3)circle(2pt); 
      \draw(3,0)node[right]{\tiny{$p_1$}};
   \draw(0,3)node[above]{\tiny{$p_2$}};
   \draw(-0.3,-0.5)node[below]{\tiny{$p_4$}};
   \draw(0,0.5)node[above]{\tiny{$p_3$}};
   
			\draw[thick] (0,0) circle (3);
			\draw[thick,fill=gray!20] (0,0) circle (.5);
		\draw[thick,fill=gray!20](90:1.5) circle (.2);
\draw[thick,fill=gray!20](-70:1.6) circle (.1);
	\draw[thick,fill=gray!20](-70:2.5) circle (.15);
  
 \draw[orange,thick] (3,0).. controls (1.5,-1.4) and (1,-0.9) .. (0.5,-1.2) .. controls (0.1,-1.3) and (0.3,-2) .. (0.9,-1.8)  ..controls(1.2,-1.6)and (1.5,-1.6)..(3,0);

\draw[orange,thick] (3,0).. controls (1.5,-1.2) and (1,-0.9) .. (0.4,-1.05) .. controls (-0.4,-1.3) and (-0.5,-2.9) .. (0.6,-2.8)  ..controls(1.0,-2.8)and (1.67,-2.8)..(3,0);
 \draw[orange,thick] (3,0).. controls (2.2,1.6) and (0,2.9) .. (-0.8,1.8)
                   .. controls (-1.4,0.6) and (-0.9,-0.7) .. (0,-0.7)
                   .. controls (0.7,-0.7) and (1.2,0.4) .. (0,0.5);
                     \draw[orange,thick] (3,0) .. controls(2.2,0.8) and (1.5,1)..(0,1)
                   ..controls(-1,1)and (-1,-0.5)..(0,-0.5);
 \draw [orange,thick](3,0)..controls (2.2,1.4)and (1,1.9)..(-0.1,1.8)
                   ..controls (-0.6,1.8) and (-0.5,1)..(-0,1.2)
                   ..controls (0.2,1.3) and (2,1.6)..(3,0);
    \draw[orange,thick] (3,0).. controls (2,2.0) and (0,2.95) .. (-0.8,2)
          .. controls (-1.9,0.5) and (-0.8,-0.8) .. (-0.2,-0.8)
                   .. controls (0.5,-1) and (0.8,-0.5) .. (0.9,-0.1)
                    .. controls(1,0.3) and (0.6,0.8)..(0,0.8)
                   ..controls(-0.9,0.8)and (-0.9,-0.5)..(0,-0.5);
                 
   \draw[orange,thick] (3,0).. controls (2,2.9) and (0,3) .. (-1.5,2) .. controls (-3,1) and (-1.5,-0.9) .. (-0.2,-1)  ..controls(0,-1)and (1.5,-1)..(3,0);

\draw[orange](-2,1)node[left]{\tiny{$\gamma_7$}};
 \draw[orange](-1.2,1)node[left]{\tiny{$\gamma_6$}};   
 \draw[orange](-1.2,1)node[right]{\tiny{$\gamma_3$}};
 \draw[orange](-0.3,1.5)node[left]{\tiny{$\gamma_5$}};
\draw[orange](1.5,1)node[below]{\tiny{$\gamma_4$}};
\draw[orange](0.5,-1.7)node[below]{\tiny{$\gamma_1$}};
\draw[orange](0,-1.5)node[left]{\tiny{$\gamma_2$}};
 
	\end{tikzpicture}
	\caption{$(\SS,\MM,\RR)$}\label{f:ex for faithful dissection}
	\end{subfigure}
	\hfill 
	\begin{subfigure}[b]{0.48\textwidth}
		\centering
	\begin{tikzpicture}[scale=0.8]
		\draw[thick] (0,0) circle (3);
		\draw[thick,fill=gray!20] (0,0) circle (.5);
		\draw[thick,fill=gray!20](-1,-1.8) circle (.2);
		\draw[thick,fill=gray!20](90:1.5) circle (.2);
	\fill (0,-0.5)circle(2pt);
  \fill (0,0.5)circle(2pt);
     \fill (0,3)circle(2pt); 
      \draw(0,0.5)node[right]{\tiny{$p_3$}};
   \draw(0,3)node[above]{\tiny{$v$}};
   \draw(-0.3,-0.5)node[below]{\tiny{$p_4$}};
   \draw(-60:3)node[below]{\tiny{$p_1$}};

        \draw[thick,fill=gray!20] (0,-2) circle (.1);
        \draw[thick,orange] (0,1) .. controls (.5,1.05) and (.5,2) .. (0,3);
		\draw[thick,orange] (0,1) .. controls (-.5,1.05) and (-.5,2) .. (0,3);
        \draw[thick,orange] (0,3)..controls(-.8,2) and (-.8,1)..(0,0.5);
        \draw[thick,orange] (0,3)..controls(-1.5,1.5) and (-1.5,-1)..(0,-0.5);
        \draw[thick,orange] (0,3)..controls(1.5,1.5) and (1.5,-1)..(0,-0.5);
        \draw[thick,orange] (0,3)..controls(-2,1.8) and (-2,-.6)..(-1,-.6)
                              ..controls (0,-.8) and (1,-1)..(-60:3);
        \draw [thick,orange](-.5,-1.8)..controls(-0.45,-1.3) and (0.95,-1.7)..(-60:3);
        \draw [thick,orange](-.5,-1.8)..controls(-0.45,-2.3) and (0.95,-2.3)..(-60:3);
        \draw [orange,thick](-1.6,-1.8)..controls(-1.55,-1.5)and(-0.5,.2)..(-60:3);
        \draw [orange,thick](-1.6,-1.8)..controls(-1.55,-2.8)and(-.5,-2.5)..(-60:3);
         \draw[orange] (0,-2.5)node[below]{$2$} (0,-1.5)node{$1$};
			\draw [orange] (-1.5,1.5)node{$7$}; 
			\draw[orange](0.5,1.8)node{$5$};
			\draw [orange] (-.8,1)node{$3$};
			\draw [orange] (-1,.25)node{$6$};
			\draw [orange](1.2,1)node{$4$};
       
      \end{tikzpicture}
		\caption{$(\SS_\RR,\MM_\RR,\PP_\RR)$}\label{f:ex}
	\end{subfigure}
	
	\caption{ From $(\SS,\MM,\PP,\RR)$ to $(\SS_\RR,\MM_\RR,\PP_\RR)$}
\end{figure}
 Let $\mathbf{R}=\{\gamma_1,\gamma_2,\gamma_3,\gamma_4,\gamma_5,\gamma_6,\gamma_7\}$ be a faithful dissection (\confer Figure \ref{f:ex for tiling and faithful dissection}).
By Theorem~\ref{t: tilting modules and faithful dissection},  $M(\RR)$ is a tilting module. Consequently, we obtain the triple $(\SS,\MM,\RR)$ (see Figure~\ref{f:ex for faithful dissection}). Note that $(\SS,\MM,\RR)$ is not a tiling and $A_{\RR}$ is not the endmorphism algebra of $M(\RR)$. As the arcs $6,7 \in \PP$ lie between the permissible arcs $\gamma_4$ and $\gamma_7$, by separating the permissible arcs in $\RR$, merging consecutive boundary segments, and adjusting the positions of certain permissible arcs of $\RR$, we obtain a new tiling $(\SS_\RR,\MM_\RR,\PP_\RR)$ (\confer Figure~\ref{f:ex}). Furthermore, this construction yields a gentle algebra $B_{\RR}=A_{\PP_{\RR}} = KQ_{\PP_{\RR}}/\langle I_{\PP_{\RR}}\rangle$ that is derived equivalent to $A_\PP$, where \[Q_{\PP_{\RR}}=\xymatrix{\ar@(lu,ld)[]_{\epsilon_1}1\ar@/^.5pc/[r]^{a}&2\ar@/^.5pc/[l]^{b}&3\ar[r]^d&5\ar@(ul,ur)[]^{\epsilon_5}\ar[r]^e&4\\ 
				&&7\ar[lu]^{f}\ar[r]^{c}&6\ar[lu]^{h}\ar[ru]^g}\]
            with $I_{\PP_{\RR}}=\{ab,ba,\epsilon_1^2,cg,de,\epsilon_5^2\}.$


\end{ex}

    \bibliographystyle{acm}
    \bibliography{skewref}

\end{document}